\documentclass[12pt]{article}
\usepackage[margin=2.5cm]{geometry}
\usepackage{authblk}

\usepackage{amsmath}
\usepackage{amssymb}
\usepackage{amsthm}

\usepackage{enumitem}
\usepackage{hyperref}
\usepackage{bbm}
\usepackage{dsfont}
\usepackage{setspace}

\hypersetup{
	colorlinks   = true,
	linkcolor=blue,
	citecolor=blue
}

\newtheorem{theorem}{Theorem}[section]
\newtheorem{lemma}[theorem]{Lemma}
\newtheorem{corollary}[theorem]{Corollary}
\newtheorem{remark}[theorem]{Remark}

\newcommand{\bfX}{\mathbf{X}}
\newcommand{\bfY}{\mathbf{Y}}

\newcommand{\NN}{\mathbb{N}}
\newcommand{\RR}{\mathbb{R}}
\newcommand{\EE}{\mathbb{E}}
\newcommand{\bbone}{\mathbbm{1}}

\newcommand{\rE}{\mathcal{E}}
\newcommand{\rP}{\mathcal{P}}

\newcommand{\vp}{\varphi}
\newcommand{\bX}{\overline{X}}
\newcommand{\bY}{\overline{Y}}
\newcommand{\conx}{\mathbb{X}}
\newcommand{\cony}{\mathbb{Y}}
\newcommand{\dcbo}{\rho}
\newcommand{\dmf}{\overline{\rho}}
\newcommand{\dpmf}{\overline{\mu}}

\DeclareMathOperator{\law}{Law}
\newcommand{\diff}{\,\mathrm{d}}

\author[1]{Hui Huang}
\author[2]{Jethro Warnett}
\affil[1]{School of Mathematics, Hunan University,  Changsha, 410082, China\\
Email: huihuang1@hnu.edu.cn\vspace{.3cm}}
\affil[2]{Mathematical Institute, University of Oxford, Woodstock Road, Oxford, OX2 6GG, United Kingdom,
Email: warnett@maths.ox.ac.uk}

\begin{document}
    \title{Well-posedness and mean-field limit estimate of a consensus-based algorithm for min-max problems}
    \maketitle
    
    \textbf{Abstract.} The recent work \cite{borghi_huang_qiu_2024_particle} proposes a derivative-free consensus-based particle method that computes global solutions to nonconvex–nonconcave min–max problems and establishes global exponential convergence in the sense of the mean-field law.
    This paper aims to address the theoretical gaps in \cite{borghi_huang_qiu_2024_particle}, specifically by providing a quantitative estimate of the mean-field limit with respect to the number of particles, as well as establishing the well-posedness of both the finite particle model and the corresponding mean-field dynamics.\\
    {\noindent\small {\bf Keywords:} Mean-field limits, Consensus-based optimization, Interacting particle systems, Coupling methods, Wasserstein stability estimates} \\
{\noindent\small {\bf AMS subject classifications:} 65C35; 65K05; 90C56; 35Q90; 35Q83}
    \allowdisplaybreaks

    \section{Introduction}

    Min-max problems are a central class of optimization problems. They describe two competing agents, where one tries to minimize an objective function, whilst the other tries to maximize it. The min-max problem was first introduced in Nash's seminal paper \cite{nash_1950_equilibrium}, in which he established conditions for competing agents to reach an equilibria. The min-max problem has also found substantial applications in machine learning. Specifically, they have become a central research question for Generative Adversarial Networks (GAN) \cite{goodfellow_pougetabadie_mirza_xu_wardefarley_ozair_courville_bengio_2014_generative}. Here, two neural networks compete with each other, the first tries to generate data, whilst the other tries to discriminate between the generated data and the real-world data. For a broader perspective of the history and application of the min-max problem, we refer the reader to \cite{zhang_poupart_yu_2022_optimality} and the references therein.

    Recently, a consensus-based optimization (CBO) algorithm has been proposed to find the global optimisers of nonconvex-nonconcave min-max problems \cite{borghi_huang_qiu_2024_particle}. The CBO algorithm has been originally proposed in \cite{pinnau_totzeck_tse_martin_2017_consensusbased, carrillo_choi_totzeck_tse_2018_analytical}, which is  inspired by collective behavior in nature, such as flocking birds or swarming fish. It is used to solve optimization problems by simulating the interaction of particles that collectively and stochastically move toward an optimal solution. The agents share information and adjust their positions based on a consensus mechanism, which drives the system toward a global optimum. The CBO algorithm offers several advantages, including its derivative-free nature and amenability to mathematical analysis. It has since been extended and adapted to a wide range of optimization settings, including such as saddle-point problems \cite{huang_qiu_riedl_2024_consensusbased}, multi-level optimization \cite{herty_huang_kalise_kouhkouh_2025_multiscale,trillos_kumarakash_li_riedl_zhu_2025_defending}, multi-objective optimization \cite{borghi_herty_pareschi_2023_adaptive}, constrained optimization \cite{beddrich_chenchene_fornasier_huang_wohlmuth_2024_constrained,bungert_hoffmann_kim_roith_2025_mirrorcbo}, and multi-player games \cite{chenchene_huang_qiu_2025_consensusbased}.  Rather than attempting to include a necessarily incomplete
account of this very fast-growing field, we refer the interested reader to the review papers \cite{totzeck_2021_trends} and \cite{wang_li_huang_2025_mathematical} for a more recent and
relatively comprehensive report.

    The authors in \cite{borghi_huang_qiu_2024_particle} consider a two-agent CBO algorithm. We are given a cost function $\rE:\RR^{d_1}\times\RR^{d_2}\to \RR$. The first agent picks a decision, aiming to make the objective as small as possible under the second agent’s maximizing response. This means
    \begin{align}
        \label{eq:min_max}\min_{x\in\RR^{d_1}}\max_{y\in\RR^{d_2}}\;\rE(x,y).
        \tag{Min-Max}
    \end{align}
    In order to define a global optimal solution to this problem, we introduce the definition of the upper envelope function
    \begin{align*}
        \bar{\rE}(x):=\sup_{y\in \RR^{d_2}}\rE(x,y)
        \qquad\forall x\in\RR^{d_1},
    \end{align*}
    where this function may take the value $+\infty$. Now, we say that a point $(x^\ast,y^\ast)\in \RR^{d_1}\times\RR^{d_2}$ is a gobal min-max solution to \eqref{eq:min_max} if
    \begin{align*}
        x^\ast\in\arg\min_{x\in\RR^{d_1}} \bar{\rE}(x)
        \qquad
        \text{ and }
        \qquad y^\ast\in\arg\max_{y\in\RR^{d_2}}\rE(x^\ast,y).
    \end{align*}
    
    Each agent is attributed a collection of $N$ particles, which are denoted by the vectors $X^1,\ldots, X^N\in\RR^{d_1}$ for the first one and $Y^1,\ldots, Y^N\in\RR^{d_2}$ for the second one. Both agents have their respective consensus point that they drift towards, namely
    \small{\begin{align}\label{eq:defi_consensus}
        \begin{split}
            \conx_{\alpha,\beta}(\dcbo_t^{X,N},\dcbo_t^{Y,N}) &:= \frac{\displaystyle\int_{\mathbb{R}^d} x\;\omega^{\rE}_{\alpha}(x,\cony_{\beta}(\dcbo_t^{Y,N},x))\diff \dcbo_t^{X,N}(x)}{\displaystyle\int_{\mathbb{R}^d} \omega^{\rE}_{\alpha}(x,\cony_{\beta}(\dcbo_t^{Y,N},x))\diff \dcbo_t^{X,N}(x)},\\ 
        \cony_{\beta}(\dcbo_t^{Y,N},X_t^i) &:= \frac{\displaystyle\int_{\mathbb{R}^d} y\,\omega^{\rE}_{-\beta}(X_t^i,y)\diff\dcbo^{Y,N}_t( y)}{\displaystyle\int_{\mathbb{R}^d} \omega^{\rE}_{-\beta}(X_t^i,y)\diff\dcbo^{Y,N}_t(y)},
        \end{split}
    \end{align}}
    where for any value $\gamma\in\RR$ we define the weights and the empirical approximations by
    \begin{align*}
        \omega_\gamma^\rE(x,y):=e^{-\gamma \rE(x,y)},\quad
        \dcbo_t^{X,N} := \frac{1}{N}\sum_{i=1}^N \delta_{X_t^i},\quad
        \dcbo_t^{Y,N} := \frac{1}{N}\sum_{i=1}^N \delta_{Y_t^i}.
    \end{align*}
    The continuous CBO dynamics is expressed by an interacting particle evolution system represented by the following stochastic differential equations, for every index $i \in [N]$,
    \begin{align}
        \label{eq:cbo}
        \begin{cases}
            \begin{aligned}
                \diff X_t^i =& -\lambda(X_t^i - \conx_{\alpha,\beta}(\dcbo_t^{X,N},\dcbo_t^{Y,N}))\diff t+ \sigma D(X_t^i - \conx_{\alpha,\beta}(\dcbo_t^{X,N},\dcbo_t^{Y,N}))\diff B_t^{X,i}, 
            \end{aligned}\\
            \begin{aligned}
                \diff Y_t^i =& -\lambda(Y_t^i - \cony_{\beta}(\dcbo_t^{Y,N}, X_t^i))\diff t+ \sigma D(Y_t^i - \cony_{\beta}(\dcbo_t^{Y,N}, X_t^i))\diff B_t^{Y,i},
            \end{aligned}  
        \end{cases}
        \tag{CBO}
    \end{align}
    where $\lambda,\sigma>0$ are drift and diffusion parameters respectively and $\{B^{m,i}\}_{m\in \{X,Y\}, i\in [N]}$ are independent Brownian motions. Here $D:\mathbb{R}^d \to \mathbb{R}^{d\times d}$ is a (possibly agent-dependent) matrix-valued map that is Lipschitz continuous with respect to the Frobenius norm. For simplicity, we take either the anisotropic choice $D(x)=\mathrm{diag}(x_1,\ldots,x_d)$ or the isotropic choice $D(x)=|x|\,\mathrm{id}$, for $x\in\mathbb{R}^d$. The initial law satisfies
    \begin{align*}
        \law(X_0^1,Y_0^1, \dots, X_0^N,Y_0^N) = (\dcbo_0^X\otimes\dcbo_0^Y)^{\otimes N},
    \end{align*}
     given initial probability distributions $\dcbo_0^X$ on $\RR^{d_1}$ and $\dcbo_0^Y$ on $\RR^{d_2}$. The paper \cite{borghi_huang_qiu_2024_particle} formally states that the mean-field limit of the above particle dynamics is dictated by the following stochastic differential equation
    \begin{align}
        \label{eq:mf_cbo}
        \begin{cases}
            \begin{aligned}
                \diff\bX_t =& -\lambda(\bX_t - \conx_{\alpha,\beta}(\dmf_t^X, \dmf_t^Y))\diff t+ \sigma D(\bX_t - \conx_{\alpha,\beta}(\dmf_t^X, \dmf_t^Y))\diff B_t^X, 
            \end{aligned}\\
            \begin{aligned}
                \diff \bY_t = &-\lambda(\bY_t - \cony_{\beta}(\dmf_t^Y, \bX_t))\diff t+ \sigma D(\bY_t - \cony_{\beta}(\dmf_t^Y, \bX_t))\diff B_t^Y,
            \end{aligned}
        \end{cases}
        \tag{MF CBO}
    \end{align}
    where we have the law of the two agents 
    \begin{align*}
    	\dmf_t^X := \law\big(\,\bX_t\big),\qquad
        \dmf_t^Y := \law\big(\,\bY_t\big).
    \end{align*}
    The initial law satisfies $
        \law(\bX_0,\bY_0) = \dmf_0^X\otimes\dmf_0^Y,$ 
     given initial probability distributions $\dmf_0^X$ on $\RR^{d_1}$ and $\dmf_0^Y$ on $\RR^{d_2}$.

    \subsection{Motivation}
    The authors \cite{borghi_huang_qiu_2024_particle} rigorously prove global convergence in the large particle limit. Specifically, they establish the convergence of the mean-field dynamics $(\bX_t, \bY_t)$ in \eqref{eq:mf_cbo} to the global min-max solution $(x^\ast,y^\ast)$ as $t$ approaches infinity. To achieve this, they introduce the error functions:
    \begin{align}\label{eq:defi_V}
        \begin{split}
            &V^X(t): = \mathbb{E}|\bX_t - x^\ast|^2,\quad V^Y(t): = \mathbb{E}|\bY_t - y^\ast|^2,\\
        &V(t): = V^X(t)+V^Y(t).
        \end{split}
    \end{align}
    By analyzing the decay behavior of the (cumulative) error function $V(t)$, they demonstrate that $V(t)$ decays exponentially at a finite time $T_* > 0$, with a decay rate that can be controlled through the parameters of the CBO method. Specifically, it holds that
    \begin{align*}
        V(t) \leq V(0) \exp\left(-\frac{2\lambda - \sigma^2}{2} t\right)
    \end{align*}
    for $t \in [0, T_*]$, and $V(T_*) \leq \varepsilon$ for any given accuracy $\varepsilon > 0$. It is worth noting that one can also establish particle-level convergence of CBO methods directly, using a variance-based analysis; see, e.g., \cite{byeon_ha_hwang_ko_yoon_2025_consensus,ko_ha_jin_kim_2022_convergence,ha_hwang_kim_2024_timediscrete}.
    
    However, the justification for passing to the mean-field limit from the particle system \eqref{eq:cbo} to the mean-field dynamics \eqref{eq:mf_cbo} remains only formal, and a rigorous mathematical analysis is  absent in \cite{borghi_huang_qiu_2024_particle}. Establishing a rigorous proof of the CBO models has been a challenging task, primarily due to the fact that the consensus point defined in \eqref{eq:defi_consensus} is only locally Lipschitz.
    
   Recent years have seen substantial progress on the mean-field limit analysis for CBO. For example, \cite{fornasier_huang_pareschi_sunnen_2020_consensusbased,ha_kang_kim_kim_yang_2022_stochastic} derived mean-field limit estimates for CBO variants on compact manifolds by assuming a globally Lipschitz consensus point. Later, \cite{huang_qiu_2022_meanfield} proved the mean-field limit for the standard CBO model via a compactness argument based on Prokhorov’s theorem; however, this method did not yield an explicit convergence rate in terms of the particle number $N$. Subsequent work \cite{fornasier_klock_riedl_2024_consensusbased,huang_qiu_riedl_2023_global} established a probabilistic mean-field approximation, showing that the limit estimate holds with high probability. The high-probability restriction was removed in \cite{gerber_hoffmann_vaes_2025_meanfield} through an improved stability bound for the consensus point, which was further extended to the multi-species setting in \cite{huang_warnett_2025_wellposedness}.
    Finally, uniform-in-time type of mean-field estimates have been recently obtained \cite{huang_kouhkouh_2025_uniformintime,gerber_hoffmann_kim_vaes_2025_uniformintime,bayraktar_ekren_zhou_2025_uniformintime,choi2025modified}.

    The primary objective of this paper is to address some theoretical gaps in \cite{borghi_huang_qiu_2024_particle} by providing a quantitative estimate of the mean-field limit with respect to the number $N$ of particles. Additionally, we establish the well-posedness of both the finite particle model \eqref{eq:cbo} and the mean-field dynamics \eqref{eq:mf_cbo}. We extend the results of \cite{gerber_hoffmann_vaes_2025_meanfield} to settings where the dynamics depend on several distributions with inter-agent dependencies, rather than a single distribution.
    
    For this reason we cannot take this extension for granted, as it is not clear that the same mean-field argument still holds and if the convergence rate remains unchanged. This extension requires several non-trivial steps, such as the measure--variable stability result in Lemma \ref{lem:stab_est_y}, the coupled stability result in Lemma \ref{lemma:basic_stability_estimate}, the weighted moment bound in Lemma \ref{lemma:moment_estimates_for_the_empirical_measures}, the convergence established in Lemma \ref{lemma:convergence_of_the_weighted_mean_for_iid_samples_y} that is uniform with respect to the state of the agent $X$, or the coupled convergence in Lemma \ref{lemma:convergence_of_the_weighted_mean_for_iid_samples}. Our proof for the well-posedness of \eqref{eq:mf_cbo} (See Section \ref{section:meanfield_limit_for_cbo}) is fundamentally different due to the agent--dependent nature of the consensus function $\cony_{\beta}(\dmf_t^Y, \bX_t)$ for agent $Y$. Consequently, the results presented in \cite{carrillo_choi_totzeck_tse_2018_analytical} cannot be directly applied.  Finally, in the proof of Theorem \ref{theorem:meanfield_of_cbo} for the mean-field limit, we need to take the mean-field limit in two measures simultaneously, and the mean-field limit of the agent $Y$ must happen uniformly among all possible states of the agent $X$. Our approach therefore both leverages and advances the methodology of \cite{gerber_hoffmann_vaes_2025_meanfield}.
    
    \subsection{Main Results}
    At first we ferment the notation that we use throughout the paper. Given any radius $R>0$, we define the closed ball of radius $R$ centered at the origin in $\RR^{d_2}$ by $B_R$. We denote by $|x|$ and $|A|$ the absolute value of a vector $x$ the Lebesgue measure of a Borel measurable Euclidean subset $A$. For any positive values $1<p<\infty$ and $R>0$ we let $\rP_{p,R}(\RR^d)$ (or $\rP_p(\RR^d)$) denote the space of probability measures with $p$-moment bounded by $R$ (or finite $p$-moment), that means
    \begin{align*}
        \int_{\RR^d}|x|^p\diff\mu(x)\leq R,\quad
        \int_{\RR^d}|x|^p\diff\nu(x)<\infty,
        \quad\forall \mu\in \rP_{p,R}(\RR^d) \quad \forall \nu\in \rP_p(\RR^d).
    \end{align*}
    The Wasserstein distance between two probability measures $\mu,\nu\in\rP_p(\RR^d)$ is the minimal transportation cost required to move one distribution to the other, given by
    \begin{align*}
        W_p(\mu,\nu)
        = \left( \inf
        \int_{\RR^d\times\RR^d} |x-y|^p \diff\gamma(x,y)
      \right)^{1/p},
    \end{align*}
    where we take the infimum over all $\gamma\in \rP(\RR^d\times\RR^d)$ satisfying the constraint $\pi(A\times \RR^d)=\mu(A)$ and $\pi(\RR^d\times A)=\nu(A)$ for all Borel measurable $A\subseteq \RR^d$. We denote the set of all admissible $\gamma$, called a coupling, by $\Gamma(\mu,\nu)$. We define the expected value of a probability measure $\mu\in\rP_p(\RR^d)$ by
    \begin{align*}
        \EE[\mu]:=\int_{\RR^d} x\diff \mu(x).
    \end{align*}
    For any random vector $X$ and value $p\geq 1$ we write $\EE|X|^p$ as a shorthand for $\EE\big[|X|^p\big]$. Finally, for any vector $x\in\RR^d$ we let $\delta_x$ denote the Dirac measure at $x$.
    
    Throughout this paper we assume that the cost function is bounded and that the Lipschitz constant has polynomial growth.
    \begin{enumerate}[label=(A\arabic*)]
        \item \label{ass:bnd} There exists a constant $c>0$ such that we have boundedness
    		\begin{align*}
    			-c
    			\leq \rE(x,y)
    			\leq c.
    		\end{align*}
        \item \label{ass:lip} There exists constants $C>0$ and $s\geq 0$ such that for $z_0,z_1\in\RR^{d_1+d_2}$, where we set the vector $z_k=(x_k,y_k)\in \RR^{d_1}\times \RR^{d_2}$ for $k\in \{0,1\}$, we have the polynomial-growth Lipschitz property
    		\begin{align*}
    			|\rE(z_0)-\rE(z_1)|
    			\leq C(1+|z_0|+|z_1|)^s\, |z_0-z_1|.
    		\end{align*}
    \end{enumerate}
 We develop a novel existence theory for the nonlinear coupled setting by significantly adapting the strategy of \cite[Theorems 2.2–2.3]{gerber_hoffmann_vaes_2025_meanfield}, overcoming key challenges arising from the coupling that are absent in the original uncoupled framework.

    \begin{theorem}[{Existence and uniqueness for \eqref{eq:cbo}}]
    	\label{theorem:existence_and_uniqueness_for_cbo}
        Let Assumptions \ref{ass:lip} hold. Then the SDEs \eqref{eq:cbo} posses unique strong solutions $\{(X_t^i, Y_t^i)\}_{i\in [N]}$ for any initial conditions $\{(X_0^i,Y_0^i)\}_{i\in [N]}$ that are independent of the Brownian motions $\{(B_t^{X,i}, B_t^{Y,i})\}_{i\in [N]}$. The solutions are almost surely continuous.
    \end{theorem}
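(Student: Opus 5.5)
We view \eqref{eq:cbo} as a single SDE on $\RR^{N(d_1+d_2)}$ for the stacked state $Z=(X^1,Y^1,\dots,X^N,Y^N)$, namely $\diff Z_t = b(Z_t)\diff t + \Sigma(Z_t)\diff B_t$, where $B$ stacks the $2N$ independent Brownian motions. The drift $b$ and the block-diagonal diffusion $\Sigma$ are built from the maps $Z\mapsto \conx_{\alpha,\beta}(\dcbo^{X,N},\dcbo^{Y,N})$ and $Z\mapsto \cony_\beta(\dcbo^{Y,N},X^i)$. The plan is the classical one. We show that $b$ and $\Sigma$ are locally Lipschitz, so a unique strong solution exists up to an explosion time. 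We then rule out explosion with a Lyapunov estimate based on linear growth. Continuity of paths then comes for free from the strong solution.

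\textbf{Local Lipschitz continuity.} By \ref{ass:lip}, $\rE$ is continuous, hence locally bounded and locally Lipschitz. For $\cony_\beta(\dcbo^{Y,N},x)$ with empirical measure, the numerator and denominator are finite sums of the smooth-in-$\rE$ terms $e^{\beta\rE(x,Y^j)}$ and $Y^j e^{\beta\rE(x,Y^j)}$. On a ball $|Z|\le R$ (and with $|x|\le R$), these are Lipschitz with constants depending on $R$. The denominator $\frac1N\sum_j e^{\beta\rE(x,Y^j)}$ is bounded below by $e^{-\beta\sup_{B_R}|\rE|}>0$. So the quotient is locally Lipschitz in $(x,Y^1,\dots,Y^N)$, and in particular in $Z$ when $x=X^i$. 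Next, $\conx_{\alpha,\beta}$ is a quotient of sums of $X^j$ and $\exp(-\alpha\rE(X^j,\cony_\beta(\dcbo^{Y,N},X^j)))$. This is a composition of locally Lipschitz maps, and the denominator is again bounded below on compacts. Hence it is locally Lipschitz in $Z$. Since $D$ is Lipschitz, $b$ and $\Sigma$ are locally Lipschitz. By the standard theory (e.g.\ truncation plus Itô existence/uniqueness), there is a unique strong solution up to $\tau=\lim_R\tau_R$, with $\tau_R$ the exit time of $B_R$.

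\textbf{Non-explosion.} Each consensus point is a convex combination of particle positions. Thus $|\cony_\beta(\dcbo^{Y,N},X^i)|\le\max_j|Y^j|$ and $|\conx_{\alpha,\beta}|\le\max_j|X^j|$ hold regardless of $\rE$; no boundedness assumption \ref{ass:bnd} is needed, consistent with the statement. It follows that $|b(Z)|+\|\Sigma(Z)\|\le C_N(1+|Z|)$. Applying Itô's formula to $|Z_{t\wedge\tau_R}|^2$ and using Gronwall gives $\EE|Z_{t\wedge\tau_R}|^2\le (\EE|Z_0|^2+Ct)e^{Ct}$. We first assume $\EE|Z_0|^2<\infty$, or more generally condition on $Z_0$, since $Z_0$ is independent of $B$. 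Then $\mathbb P(\tau_R\le t)\le R^{-2}(\cdots)\to0$, so $\tau=\infty$ a.s. For general initial data that are independent of $B$ but without moments, we run the argument on the events $\{|Z_0|\le k\}$, or equivalently apply the Gronwall argument to $|Z|^2\bbone_{\{|Z_0|\le k\}}$, and then let $k\to\infty$. Pathwise uniqueness on each $[0,\tau_R]$ extends to all times, giving the unique strong solution. Continuity is immediate from the construction as an Itô process.

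\textbf{Main obstacle.} The delicate step is the local Lipschitz estimate for the nested map $Z\mapsto \exp(-\alpha\rE(X^j,\cony_\beta(\dcbo^{Y,N},X^j)))$. There the inner consensus point depends on both $X^j$ and all the $Y$'s. One must check that its Lipschitz constant on $B_R$ is controlled, which uses that $|\cony_\beta|\le R$, so the outer $\rE$ is evaluated on a bounded set where the polynomial-growth constant in \ref{ass:lip} is $C(1+3R)^s$. We would first write out the explicit difference-of-quotients bound, $|a/b-a'/b'|\le |a-a'|/b+|a'||b-b'|/(bb')$, with the lower bounds on $b,b'$ from compactness.
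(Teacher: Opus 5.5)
Your proposal is correct and has the same architecture as the paper's proof: local Lipschitz continuity of the stacked coefficients, plus the quadratic Lyapunov function $|Z|^2$, with linear growth coming from the fact that both consensus points are convex combinations of particle positions; you carry out the It\^o--Gronwall--Chebyshev argument and the localization on $\{|Z_0|\le k\}$ by hand, where the paper simply invokes \cite[Theorem 3.5]{khasminskii_2012_stochastic}. The one substantive difference is the local Lipschitz step (Lemma \ref{cor:loc_lip_prop}): the paper replaces $\rE$ by a function satisfying \ref{ass:bnd}--\ref{ass:lip} that agrees with $\rE$ on $\overline{B_R}\times\overline{B_R}$, applies the Wasserstein stability Corollaries \ref{cor:stab_est_cony} and \ref{cor:stab_est_conx}, and bounds $W_p$ between empirical measures by particle differences, whereas your direct difference-of-quotients estimate is more elementary, self-contained, and needs no extension of $\rE$.
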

    
    \begin{theorem}[{Existence and uniqueness for \eqref{eq:mf_cbo}}]
    	\label{theorem:existence_and_uniqueness_for_mfcbo}
        Let Assumptions \ref{ass:bnd} and \ref{ass:lip} hold, let $p>2$ if $s=0$, otherwise let $p\geq 2+s$. Then, for all $T > 0$, $\dmf_0^X\in\rP_p(\RR^{d_1})$ and $\dmf_0^Y\in\rP_{p+\eta}(\RR^{d_2})$, there exists unique processes $\bX:\Omega\to C^0([0,T], \RR^{d_1})$ and $\bY:\Omega\to C^0([0,T], \RR^{d_2})$ satisfying \eqref{eq:mf_cbo} in the strong sense with initial condition $\law(\bX_0,\bY_0)= \dmf_0^X\otimes \dmf_0^Y$. Furthermore, we have the bounds
        \begin{align*}
            \begin{split}
                &\EE \left[ 
                \sup_{\substack{t \in [0, T]}} 
                \;|\bX_t|^p+|\bY_t|^p+|\conx_{\alpha,\beta}(\dmf_t^X, \dmf_t^Y)|+|\cony_\beta(\dmf_t^Y, \bX_t)|
                \right] < \infty,\\
                &\dmf_t^X = \law\big(\,\bX_t\big),\quad
                \dmf_t^Y = \law\big(\,\bY_t\big)
            \end{split}
        \end{align*}
        and the function $t \mapsto (\conx_{\alpha,\beta}(\dmf_t^X, \dmf_t^Y),\cony_\beta(\dmf_t^Y, \bX_t))$ is continuous over $[0, T]$.
    \end{theorem}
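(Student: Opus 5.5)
We follow the fixed-point strategy of \cite[Theorems 2.2--2.3]{gerber_hoffmann_vaes_2025_meanfield}, adapted to the coupling. The difficulty is that the $Y$-consensus $\cony_\beta(\dmf_t^Y,\bX_t)$ depends on the random state $\bX_t$, so it is not a deterministic path. We therefore freeze a pair of measure-valued curves, not a pair of consensus paths.

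\textbf{Setup.} Fix $T>0$. Let $\mathcal{C}$ be the set of pairs $(\mu^X,\mu^Y)$ with $\mu^X\in C^0([0,T],\rP_p(\RR^{d_1}))$ and $\mu^Y\in C^0([0,T],\rP_{p}(\RR^{d_2}))$, continuous in $W_p$ (or $W_2$), with $\mu^X_0=\dmf_0^X$, $\mu^Y_0=\dmf_0^Y$ and uniformly bounded $p$-th moments. Given $(\mu^X,\mu^Y)\in\mathcal{C}$, set
\[
m_t:=\conx_{\alpha,\beta}(\mu_t^X,\mu_t^Y),\qquad g(t,x):=\cony_\beta(\mu_t^Y,x).
\]

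\textbf{Step 1: regularity of the frozen coefficients.} By \ref{ass:bnd} the weights are bounded above and below by $e^{\pm\beta c}$. This gives $|g(t,x)|\le C\big(\int|y|^p\diff\mu_t^Y\big)^{1/p}$, and likewise $|m_t|\le C$.

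For Lipschitz dependence on $x$, we differentiate through the weights. Using \ref{ass:lip},
\[
|g(t,x)-g(t,x')|\le C(1+|x|+|x'|)^s|x-x'|\int(1+|y|)^{s+1}\diff\mu_t^Y .
\]
This is only locally Lipschitz in $x$, with polynomially growing constant. This is where the extra moment $\rP_{p+\eta}$ on $\dmf_0^Y$ is needed.

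Continuity in $t$ of $m_t$ and $g(t,\cdot)$ follows from the stability estimates of Lemma \ref{lem:stab_est_y} and Lemma \ref{lemma:basic_stability_estimate}, which bound $|\cony_\beta(\mu,x)-\cony_\beta(\nu,x)|$ by $C(x)W_p(\mu,\nu)$, and similarly for $\conx$.

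\textbf{Step 2: solving the frozen system.} Consider
\begin{align*}
\diff X_t &= -\lambda(X_t-m_t)\diff t+\sigma D(X_t-m_t)\diff B^X_t,\\
\diff Y_t &= -\lambda\big(Y_t-g(t,X_t)\big)\diff t+\sigma D\big(Y_t-g(t,X_t)\big)\diff B^Y_t .
\end{align*}
The $X$-equation is linear with a continuous deterministic forcing, so it has a unique strong solution.

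The $Y$-equation has coefficients depending on $(t,Y,X)$ that are globally Lipschitz in $Y$ and of linear growth. Its forcing $g(t,X_t)$ is a progressively measurable, continuous process with $\EE\sup_t|g(t,X_t)|^p<\infty$. Hence standard SDE theory gives a unique strong solution, and Gronwall plus BDG give $\EE\sup_t(|X_t|^p+|Y_t|^p)<\infty$.

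Define the map
\[
\mathcal{T}(\mu^X,\mu^Y):=(\law X_t,\law Y_t)_{t\in[0,T]} .
\]
It maps $\mathcal{C}$ into itself, since time continuity in $W_p$ follows from the moment bounds and the modulus of continuity of the paths.

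\textbf{Step 3: the fixed point (main obstacle).} For uniqueness, we couple two solutions $\mathcal{T}(\mu)$ and $\mathcal{T}(\nu)$ with the same Brownian motions and initial data. Set
\[
\Delta_t:=\EE|X_t-X'_t|^2+\EE|Y_t-Y'_t|^2 .
\]
The $X$-difference is controlled by $|m_t-m'_t|\lesssim W(\mu_t,\nu_t)$. The $Y$-difference involves the term
\[
\EE|g_\mu(t,X_t)-g_\nu(t,X'_t)|^2 ,
\]
which splits into two pieces.
\begin{itemize}
\item A \emph{measure-variation piece}, bounded by $\EE[C(X_t)^2]\,W(\mu_t^Y,\nu_t^Y)^2$.
\item A \emph{state-variation piece}, bounded by $\EE\big[(1+|X_t|+|X'_t|)^{2s}|X_t-X'_t|^2\big]$.
\end{itemize}
The state-variation piece is the hard part: the polynomial weight prevents a direct Gronwall bound.

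To handle it we first try a localization, following \cite{gerber_hoffmann_vaes_2025_meanfield}. Introduce stopping times $\tau_R$ at which $|X|,|X'|$ exceed $R$. Obtain Gronwall on $[0,\tau_R]$ with constant $e^{CR^{2s}T}$, and bound the complement via Hölder and the $p$-moments. This requires $p\ge 2+s$.

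The measure-variation piece is handled by $W_2^2(\mathcal{T}(\mu)_t,\mathcal{T}(\nu)_t)\le\Delta_t$. A Picard iteration in the weighted metric $\sup_t e^{-Kt}W_2^2$ then yields a contraction for large $K$. If the localization fails to close, we instead use a Leray--Schauder compactness argument for existence, with compactness from Hölder-in-time bounds, and prove uniqueness separately via the coupling estimate.

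\textbf{Step 4: the stated properties.} The fixed point gives $\dmf_t^X=\law(\bX_t)$ and $\dmf_t^Y=\law(\bY_t)$. The stated sup-moment bound follows from Step 2 together with the bounds on $m_t$ and $g$ from Step 1. Continuity of $t\mapsto(\conx_{\alpha,\beta}(\dmf_t^X,\dmf_t^Y),\cony_\beta(\dmf_t^Y,\bX_t))$ follows from Step 1 and the path continuity of $\bX$.
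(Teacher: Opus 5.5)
\paragraph{There is a genuine gap in Step 3.} For $s>0$, Step~3 does not produce the fixed point.

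The contraction cannot close with stopping times. After Gronwall on $[0,t\wedge\tau_R]$ you must still control $\EE\big[|Y_t-Y_t'|^2\bbone_{\{\tau_R<t\}}\big]$. H\"older with the $p$-moments bounds this by $C\,\mathbb{P}(\tau_R<t)^{1-2/p}\le CR^{-(p-2)}$, an additive term that does not scale with the distance between the two input curves. At best you get $d(\mathcal{T}\mu,\mathcal{T}\nu)^2\le Ce^{CR^{2s}T}d(\mu,\nu)^2+CR^{-(p-2)}$, which is a contraction for no choice of $R$. Letting $R$ depend on $d(\mu,\nu)$ only yields a logarithmic modulus of continuity, not a Lipschitz bound.

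The metric is also mismatched. Lemmas~\ref{lem:stab_est_y} and~\ref{lemma:basic_stability_estimate} are Lipschitz bounds in $W_p$ with $p\ge s+2$, resp.\ $p\ge 2(s+1)$, and for $s>0$ these are not controlled by $W_2$.

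The Leray--Schauder fallback is only named, and as you set it up it does not apply. The theorem needs a Banach space and the homotopy $\xi\mathcal{T}$, which is meaningless on curves of probability measures. If you instead freeze consensus \emph{functions}, $u^Y(t,\cdot)$ lives on the non-compact $\RR^{d_1}$, where uniform bounds plus equicontinuity give no compactness in the sup norm.

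\paragraph{What the paper does instead.} This last obstacle is exactly what the paper's proof is built around, via a truncation you do not have.
\begin{enumerate}
\item The paper freezes deterministic functions $(u^X,u^Y)\in\Theta_R=C^0([0,T];\RR^{d_1})\times C^0([0,T]\times B_R;\RR^{d_2})$.
\item It uses $\cony_\beta(\dmf_t^{Y,R},P_R(\bX_t^R))$ in place of $\cony_\beta(\dmf_t^Y,\bX_t)$. The $Y$-consensus is then only needed on a compact ball, and Leray--Schauder applies in $\Theta_R$, with the a priori bound coming from Remark~\ref{remark:add_xi_to_cbo}.
\item The moment bounds and equicontinuity (P1)--(P3) are uniform in $R$. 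Hence Arzel\`a--Ascoli plus a diagonal argument give $\conx_{\alpha,\beta}(\dmf^{X,n},\dmf^{Y,n})\to\mathcal{X}$ uniformly and $\cony_\beta(\dmf^{Y,n},\cdot)\to\mathcal{Y}$ locally uniformly.
\item One solves the SDE with frozen coefficients $\mathcal{X}_t$, $\mathcal{Y}_t(\bX_t^\infty)$. Stopping times then identify $\mathcal{X},\mathcal{Y}$ with the consensus points of the limit laws.
\end{enumerate}
In that last step localization is harmless, because only convergence ($n\to\infty$, then $R\to\infty$) is needed, not a Lipschitz contraction.

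\paragraph{On uniqueness.} Your sketch is the same localized coupling as the paper's Step~3. In either version the delicate term is $W_p(\dmf_t^Y,\dmf_t^{Y'})$, which is a global, unstopped quantity and is not controlled by the stopped difference.

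The ingredient that genuinely replaces localization, both for uniqueness and for a repaired contraction, is the triangular structure you did not exploit. $X-X'$ is forced only by the deterministic difference $\delta$ of the $X$-consensus points. Hence $\EE\sup_{r\le t}|X_r-X_r'|^k\le C_k\int_0^t|\delta_r|^k\diff r$ for every $k\ge2$. H\"older then absorbs the weight $(1+|X|+|X'|)^s$ with no stopping times, provided $X$ has moments of order larger than $sp$.
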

   Furthermore, we also close a key gap in \cite{borghi_huang_qiu_2024_particle}, by providing the first quantitative convergence rate for the mean-field limit.
    
    \begin{theorem}[{Mean-field limit of \eqref{eq:cbo}}]
        \label{theorem:meanfield_of_cbo}
    	Let Assumptions \ref{ass:bnd} and \ref{ass:lip} hold with $q\geq 2(2+s)$, $p\in (0,\frac{q}{2}]$, $\rho_0^X\in \rP_q(\RR^{d_1})$ and $\rho_0^Y\in \rP_q(\RR^{d_2})$. We assume the particles $\{(X^i,Y^i)\}_{i\in[N]}$ satisfy \eqref{eq:cbo} and $\{(\bX^i,\bY^i)\}_{i\in[N]}$ are $N$ i.i.d. samples for each player from \eqref{eq:mf_cbo}. They both use the same standard Brownian motions $\{(B^{X,i}, B^{Y,i})\}_{i\in[N]}$, with the same initial condition $\law(X_0^i,Y_0^i)=\law(\bX_0^i,\bY_0^i)=\rho_0^X\otimes \rho_0^Y$. Then, for each time $T>0$, there exists a positive constant $C>0$ independent of $N$ such that
        \begin{align*}
        \sup_{i\in [N]}
    		\;\left(\EE\left[\sup_{t\in [0,T]}\;\big\vert X_t^i- \bX_t^i \big\vert^p+\big\vert Y_t^i- \bY_t^i \big\vert^p\right]\right)^{\frac{1}{p}}\leq C N^{-\gamma},
    	\end{align*}
        where we define the exponent
        \begin{align*}
            \gamma:=\min\left\{\frac{1}{2},\; \frac{q-p}{2p^2},\; \frac{q-(2+s)}{2 (2+s)^2}\right\}.
        \end{align*}
    \end{theorem}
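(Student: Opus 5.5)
The plan is the synchronous coupling method of \cite{gerber_hoffmann_vaes_2025_meanfield}, adapted to the two coupled consensus maps. We fix $T>0$ and write the differences $X^i_t-\bX^i_t$ and $Y^i_t-\bY^i_t$ using the common initial data and the common Brownian motions. For $p\ge 2$ we apply the Burkholder--Davis--Gundy inequality together with the global Lipschitz continuity of $D$. This reduces the estimate to a Gronwall inequality. The forcing terms are
\[
\EE\int_0^T \big|\conx_{\alpha,\beta}(\dcbo_t^{X,N},\dcbo_t^{Y,N})-\conx_{\alpha,\beta}(\dmf_t^X,\dmf_t^Y)\big|^p
+\big|\cony_\beta(\dcbo_t^{Y,N},X^i_t)-\cony_\beta(\dmf_t^Y,\bX^i_t)\big|^p\diff t .
\]
The case $p<2$ follows afterwards from the case $p=2$, or from the case $p=q/2$, by Jensen's inequality.

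Each consensus difference is split by inserting the empirical measures of the i.i.d.\ mean-field samples, $\dmf^{X,N}_t:=\frac1N\sum_j\delta_{\bX^j_t}$ and similarly $\dmf^{Y,N}_t$. This gives two terms.
\begin{itemize}
\item \emph{Stability term.} We bound $|\conx(\dcbo^{X,N},\dcbo^{Y,N})-\conx(\dmf^{X,N},\dmf^{Y,N})|$ by the coupled stability estimate (Lemma \ref{lemma:basic_stability_estimate}). We bound $|\cony_\beta(\dcbo^{Y,N},X^i)-\cony_\beta(\dmf^{Y,N},\bX^i)|$ by the measure--variable stability estimate (Lemma \ref{lem:stab_est_y}). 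Both bounds have the form (constant depending on moments) $\times\big(W_p(\cdot,\cdot)+|X^i-\bX^i|\big)$.
\item \emph{Fluctuation term.} We bound $|\conx(\dmf^{X,N},\dmf^{Y,N})-\conx(\dmf^X,\dmf^Y)|$ by Lemma \ref{lemma:convergence_of_the_weighted_mean_for_iid_samples}. We bound $\sup_x|\cony_\beta(\dmf^{Y,N},x)-\cony_\beta(\dmf^Y,x)|$ by Lemma \ref{lemma:convergence_of_the_weighted_mean_for_iid_samples_y}, which is uniform in the $X$-state. Both give a rate $N^{-1/2}$ in $L^p$, using the $q$-moment bounds of Theorem \ref{theorem:existence_and_uniqueness_for_mfcbo}.
\end{itemize}
In the stability term, the Wasserstein distance between the two empirical measures is controlled by the empirical coupling, $W_p(\dcbo^{X,N},\dmf^{X,N})^p\le \frac1N\sum_j|X^j-\bX^j|^p$. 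By exchangeability this equals the quantity we want to estimate.

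The main obstacle is that the stability constants grow polynomially, like $(1+|x|+|y|)^{s}$ integrated against the empirical measures, because $\rE$ is only locally Lipschitz by \ref{ass:lip}. Boundedness \ref{ass:bnd} keeps the weight ratios bounded by $e^{2\alpha c}$ and $e^{2\beta c}$, but the moments of the particle empirical measures are random and unbounded. Following \cite{gerber_hoffmann_vaes_2025_meanfield}, we introduce a stopping time / good event. On the event $\Omega_N$, the moments of order $2+s$ of all four empirical measures stay below a level $R$ up to the first exit time. On this event the constants are deterministic, and Gronwall gives a bound $C(R)N^{-1/2}$. The complement of $\Omega_N$ is handled by H\"older's inequality. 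We combine the a priori $q$-moment bounds for both systems, the weighted moment bound of Lemma \ref{lemma:moment_estimates_for_the_empirical_measures}, and a Markov bound $P(\Omega_N^c)\lesssim N^{-\kappa}$ obtained from the law of large numbers for the mean-field moments with rate $q/(2+s)$. These exponents are where the terms $\frac{q-p}{2p^2}$ and $\frac{q-(2+s)}{2(2+s)^2}$ in $\gamma$ come from.

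Finally, the localization threshold $R$ (or the tolerance in the good event) is chosen to balance the good-event rate against the bad-event probability. A subtle point is that the $Y$-consensus is evaluated at the particle $X^i_t$, not at a deterministic point. This is why the uniform-in-$x$ fluctuation lemma is needed, since it lets us take the supremum over $x$ before averaging. The $X$ and $Y$ estimates are summed and closed by a single Gronwall argument, which yields the stated rate $N^{-\gamma}$.
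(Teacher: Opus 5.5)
\textbf{Verdict: there is a genuine gap in the localization step.} Your architecture otherwise matches the paper's: synchronous coupling, BDG, splitting each consensus difference at the i.i.d.\ empirical measures $\dpmf^{X,N}_t,\dpmf^{Y,N}_t$ into a stability part and a fluctuation part, an excursion set handled by H\"older, and one Gronwall argument for the summed $X$ and $Y$ inequalities.

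\textbf{The gap.} You invoke the symmetric estimates Lemma~\ref{lemma:basic_stability_estimate} and Lemma~\ref{lem:stab_est_y}, which need \emph{both} measures in $\rP_{p,R}$. So your good event must also bound the moments of the interacting empirical measures $\dcbo^{X,N}_t,\dcbo^{Y,N}_t$. But the law of large numbers \cite[Lemma 2.5]{gerber_hoffmann_vaes_2025_meanfield} applies only to the i.i.d.\ pairs $(\bX^j,\bY^j)$. For the interacting system, Lemma~\ref{lemma:moment_estimates_for_the_empirical_measures} only gives $N$-uniform moment bounds. Markov then yields $\mathbb{P}\big(\frac1N\sum_j|X^j_t|^{2+s}\ge R\big)\lesssim R^{-1}$, with no decay in $N$. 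So your claim $\mathbb{P}(\Omega_N^c)\lesssim N^{-\kappa}$ is unsupported, and as written you only obtain a high-probability estimate of the kind in \cite{fornasier_klock_riedl_2024_consensusbased}.

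\textbf{How the paper avoids it.} It uses the one-sided estimates Corollary~\ref{cor:stab_est_conx} and \ref{est:y_var_msr} of Corollary~\ref{cor:stab_est_cony}. There only $\dpmf^{X,N}_t,\dpmf^{Y,N}_t$ must lie in $\rP_{p,R}$, while $\dcbo^{X,N}_t,\dcbo^{Y,N}_t\in\rP_p$ are arbitrary (the contradiction argument built on \ref{eq:upper_bnd_cons}). The excursion set $\Omega_t=\{\frac1N\sum_j|(\bX^j_t,\bY^j_t)|^p\ge R\}$ then involves only i.i.d.\ quantities, and $\mathbb{P}(\Omega_t)\le CN^{-q/(2p)}$. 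On $\Omega_t$, H\"older together with the $N$-uniform $q$-moment bound on the consensus points gives $N^{-(q-p)/(2p)}$. Here $R>R_0$ is fixed and no stopping time is needed.

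\textbf{If you keep the symmetric estimates.} You need an extra bootstrap. Stop at the first time $\tau$ at which the interacting moments reach $2R$. On $\{\tau\le T\}$ with the i.i.d.\ moments below $R$, Minkowski forces $\frac1N\sum_j|(X^j_\tau-\bX^j_\tau,Y^j_\tau-\bY^j_\tau)|^p\ge\big((2R)^{1/p}-R^{1/p}\big)^p$. Markov plus the localized estimate then bounds $\mathbb{P}(\tau\le T)$. The final H\"older step on $\{\tau\le T\}$ costs a further loss in the exponent, which you would have to check against $\gamma$.

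\textbf{Further corrections.}
\begin{itemize}
\item \emph{Origin of the exponents.} They do not come from balancing $R$. The term $\frac{q-p}{2p^2}$ is the $p$-th root of $N^{-(q-p)/(2p)}$. The term $\frac{q-(2+s)}{2(2+s)^2}$ is the rate at $p=s+2$, passed to smaller $p$ by Jensen because $z\mapsto(q-z)/z^2$ is decreasing. So you should reduce to $p=s+2$. Reducing to $p=2$ is unavailable to the stability lemmas when $s>0$, and reducing from $p=q/2$ gives a worse rate.
\item \emph{The $Y$-constants are not deterministic.} Lemma~\ref{lem:stab_est_y} has constants $(1+|X^i_t|+|\bX^i_t|)^s$, which depend on individual positions. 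Bounding the empirical moments by $R$ only gives $|X^i_t|^{2+s}\le NR$, so for $s>0$ these constants are not controlled on your good event. Using \ref{est:y_var_msr} removes the dependence on $X^i_t$. However, it still requires $|\bX^i_t|\le R$, which the paper's $\Omega\setminus\Omega_t$ does not enforce either. So this step needs an additional argument in both versions; it is harmless for $s=0$.
\item \emph{Order of supremum and expectation.} Lemma~\ref{lemma:convergence_of_the_weighted_mean_for_iid_samples_y} bounds $\sup_x\EE$, not $\EE\sup_x$, so it does not let you ``take the supremum before averaging.'' To evaluate it at $\bX^i_t$, which is correlated with the sample $\bY^i_t$ inside $\dpmf^{Y,N}_t$, you need e.g.\ a leave-one-out step. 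The paper passes over this point too.
\end{itemize}
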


    \begin{remark}
        We achieve Monte-Carlo convergence rate, this means $\gamma=\frac{1}{2}$ and $p=2$, whenever $q\geq 6\vee \big((2+s)+(2+s)^2\big)$.
    \end{remark}

    \begin{remark}
        The results presented in this paper are closely related to another recent publication \cite{huang_warnett_2025_wellposedness}. Due to substantial differences in proof strategies, we decided to present these results separately. Our decision is purely based on methodological clarity and novelty, aiming to maintain transparency and avoid any unintended misrepresentation.
    \end{remark}
    
    \subsection{Paper structure}
    \label{section:particle_stab_weighted_mean}
    The paper will be subdivided into four sections. First in section \ref{section:necessary_lemmas} we state several necessary lemmas for the paper. Then, in section \ref{section:wellposedness_for_cbo}, \ref{section:wellposedness_for_mfcbo} and \ref{section:meanfield_limit_for_cbo} we prove the Theorems \ref{theorem:existence_and_uniqueness_for_cbo}, \ref{theorem:existence_and_uniqueness_for_mfcbo} and \ref{theorem:meanfield_of_cbo} respectively.

    \section{Necessary Lemmas}
    \label{section:necessary_lemmas}

    In this section we adapt several results from \cite{carrillo_choi_totzeck_tse_2018_analytical, gerber_hoffmann_vaes_2025_meanfield} to the agent-dependent setting. We will use these to prove Theorems \ref{theorem:existence_and_uniqueness_for_cbo}, \ref{theorem:existence_and_uniqueness_for_mfcbo} and \ref{theorem:meanfield_of_cbo}. We state a Wasserstein stability result of the consensus in section \ref{subsection:wasserstein:stability_estimate_for_weighted_mean}, we control the moment of the solution to \eqref{eq:cbo} and \eqref{eq:mf_cbo} in section \ref{section:moment_estimate_for_cbo_dynamics} and finally we derive a law-of-large-number result in section \ref{section:meanfield_limit_of_consensus_for_mfcbo}.

    \subsection{Wasserstein stability estimate for weighted mean}
    \label{subsection:wasserstein:stability_estimate_for_weighted_mean}

    \begin{lemma}[Estimate on weighted mean]
        \label{lem:est_weighted_mean}
        Let \ref{ass:bnd} hold and let $p\in [1,\infty)$. Then, for every $\mu^X\in\rP_p(\RR^{d_1})$, $\mu^Y\in\rP_p(\RR^{d_2})$ and $U\in \RR^{d_1}$ there exists constants $C_0,C_1>0$ such that
        \begin{enumerate}[label=(C\arabic*)]
            \item\label{eq:lower_bnd_weight} We can bound the weights from below
        \begin{align*}
            C_0\leq \mu^X[\omega_\alpha(\cdot,\cony_\beta(\mu^Y,\cdot))]
            \quad\text{and}\quad
            C_0\leq \mu^Y[\omega_{-\beta}(U, \cdot)].
        \end{align*}
        \item\label{eq:upper_bnd_weighted_mean} We can bound the weighted means from above
        \begin{align*}
            \mu^X[x\,\omega_\alpha(\cdot,\cony_\beta(\mu^Y,\cdot))]
            &\leq C_1\,\left(\int_{\RR^{d_1}}|x|^{p}\diff \mu^X(x)\right)^{\frac{1}{p}},\\
            \mu^Y[y\,\omega_{-\beta}(U,\cdot)]
            &\leq C_1\,\left(\int_{\RR^{d_2}}|y|^p\diff \mu^Y(y)\right)^{\frac{1}{p}}.
        \end{align*}
        \item\label{eq:upper_bnd_cons} We can bound the consensus from above
        \begin{align*}
            |\conx_{\alpha,\beta}(\mu^X,\mu^Y)|
            &\leq \frac{C_1}{C_0}\left(\int_{\RR^{d_1}}|x|^p\diff \mu^X(x)\right)^{\frac{1}{p}},\\
            |\cony_{\beta}(\mu^Y,U)|
            &\leq \frac{C_1}{C_0} \left(\int_{\RR^{d_2}}|y|^p\diff \mu^Y(x)\right)^{\frac{1}{p}}.
        \end{align*}
        \end{enumerate}
    \end{lemma}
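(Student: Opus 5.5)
The whole lemma rests on the boundedness assumption \ref{ass:bnd}. Because $-c\leq\rE\leq c$, every weight satisfies the two-sided bound
\begin{align*}
    e^{-|\gamma| c}\leq \omega_\gamma^\rE(x,y)\leq e^{|\gamma| c}
    \qquad\forall (x,y)\in\RR^{d_1}\times\RR^{d_2},\;\gamma\in\RR .
\end{align*}
This holds regardless of the point at which the weight is evaluated. In particular it holds for the composite weight $\omega_\alpha(x,\cony_\beta(\mu^Y,x))$, whose second argument depends on $\mu^Y$ and $x$ in a complicated way. So the first step is to record this bound with $\gamma=\alpha$ and $\gamma=-\beta$. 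I would then set
\begin{align*}
    C_0:=e^{-(|\alpha|+|\beta|)c},\qquad C_1:=e^{(|\alpha|+|\beta|)c}.
\end{align*}
These constants depend only on $\alpha,\beta,c$, and not on $\mu^X$, $\mu^Y$, $U$ or $p$. That uniformity is what the later sections will need.

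Before anything else, one measurability point must be checked: the composite weight has to be well defined, so that the integral against $\mu^X$ makes sense. For each fixed $x$, the denominator $\mu^Y[\omega_{-\beta}(x,\cdot)]$ is at least $e^{-|\beta|c}>0$ by the bound above. The numerator $\mu^Y[y\,\omega_{-\beta}(x,\cdot)]$ is finite because $\mu^Y\in\rP_p\subset\rP_1$. Hence $x\mapsto\cony_\beta(\mu^Y,x)$ is a well-defined finite function. It is Borel, since it is a ratio of integrals of the jointly measurable integrand $(x,y)\mapsto y\,e^{\beta\rE(x,y)}$, which is continuous by \ref{ass:lip}. So the composite weight is a bounded measurable function of $x$. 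I expect this to be the only point needing any care; the rest is direct estimation.

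For \ref{eq:lower_bnd_weight}, integrate the lower bound on the weight against the probability measures $\mu^X$ and $\mu^Y$. This gives $\mu^X[\omega_\alpha(\cdot,\cony_\beta(\mu^Y,\cdot))]\geq e^{-|\alpha|c}\geq C_0$, and likewise $\mu^Y[\omega_{-\beta}(U,\cdot)]\geq e^{-|\beta|c}\geq C_0$.

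For \ref{eq:upper_bnd_weighted_mean}, interpreted in norm, I would move the norm inside the integral and use the upper bound on the weight. Jensen's (or Hölder's) inequality on a probability space then gives
\begin{align*}
    \Big|\mu^X[x\,\omega_\alpha(\cdot,\cony_\beta(\mu^Y,\cdot))]\Big|
    \leq e^{|\alpha|c}\int|x|\diff\mu^X
    \leq C_1\Big(\int|x|^p\diff\mu^X\Big)^{1/p}.
\end{align*}
The same computation, applied to the $\mu^Y$ term with the weight $\omega_{-\beta}(U,\cdot)$, gives the second inequality.

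Finally, \ref{eq:upper_bnd_cons} follows by dividing the bound in \ref{eq:upper_bnd_weighted_mean} by the lower bound in \ref{eq:lower_bnd_weight}. This uses the definitions \eqref{eq:defi_consensus} of $\conx_{\alpha,\beta}$ and $\cony_\beta$ as ratios of exactly these quantities, and yields the stated estimates with constant $C_1/C_0$.
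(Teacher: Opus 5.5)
Your proposal is correct and follows the paper's proof: the two-sided bound $e^{-|\gamma|c}\le\omega_\gamma^\rE\le e^{|\gamma|c}$ from \ref{ass:bnd} gives \ref{eq:lower_bnd_weight}, Jensen gives \ref{eq:upper_bnd_weighted_mean}, and dividing the two gives \ref{eq:upper_bnd_cons}, with your explicit constants $C_0,C_1$ depending only on $\alpha,\beta,c$. Your measurability remark is a detail the paper omits; it needs only joint Borel measurability of $\rE$ (via Fubini--Tonelli), so you do not have to invoke \ref{ass:lip}, which the lemma does not assume.
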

    \begin{proof}
        The lower bound \ref{eq:lower_bnd_weight} is clear by \ref{ass:bnd}. For the upper bound \ref{eq:upper_bnd_weighted_mean} we use \ref{ass:bnd} and the Jensen inequality. Finally, \ref{eq:upper_bnd_cons} is an immediate consequence of \ref{eq:lower_bnd_weight} and \ref{eq:upper_bnd_weighted_mean}. 
    \end{proof}

    \begin{lemma}[Basic stability estimate for $\cony_\beta$]
        \label{lem:stab_est_y}
        Suppose that Assumptions \ref{ass:bnd} and \ref{ass:lip} hold. Then, for all $R>0$ and $p\geq s+2$, there exists a constant $C>0$ depending on $R$ and $p$ such that the following assertions hold true:
        \begin{enumerate}[label=(B\arabic*)]
            \item\label{est:y_var_bas} For all $\mu^Y\in \rP_{p,R}(\RR^{d_2})$ and $U,V\in\RR^{d_1}$ we have
        	\begin{align*}
        		|\cony_\beta(\mu^Y, U)-\cony_\beta(\mu^Y,V)|
        		\leq C\,(1+|U|+|V|)^s\;|U-V|.
        	\end{align*}
            \item\label{est:y_msr_bas} For all $\mu^Y, \nu^Y\in \rP_{p,R}(\RR^{d_2})$ and $X\in\RR^{d_1}$ we have
        	\begin{align*}
        		|\cony_\beta(\mu^Y, X)-\cony_\beta(\nu^Y,X)|
        		\leq C\,(1+|X|)^s\;W_p(\mu^Y,\nu^Y).
        	\end{align*}
            \end{enumerate}
    \end{lemma}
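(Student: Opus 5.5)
For \ref{est:y_var_bas} the plan is to write the difference of the two weighted means over a common denominator. With the shorthand $\omega_U(y):=\omega_{-\beta}^\rE(U,y)$ and $\mu^Y[f]:=\int f\diff\mu^Y$, we have
\begin{align*}
\cony_\beta(\mu^Y,U)-\cony_\beta(\mu^Y,V)
=\frac{\mu^Y[(y-\cony_\beta(\mu^Y,V))(\omega_U-\omega_V)]}{\mu^Y[\omega_U]}.
\end{align*}
By \ref{ass:bnd} the denominator is bounded below by $C_0=e^{-\beta c}$, as in Lemma \ref{lem:est_weighted_mean}. Since $|e^{a}-e^{b}|\le e^{\max(a,b)}|a-b|$, assumption \ref{ass:bnd} together with \ref{ass:lip} (applied with the same $y$ in both points) gives
\begin{align*}
|\omega_U(y)-\omega_V(y)|\le \beta e^{\beta c}C(1+|U|+|V|+2|y|)^s|U-V|.
\end{align*}
Next I would use $(1+a+b)^s\le C_s(1+a)^s(1+b)^s$ to separate the $y$-dependence. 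What remains to bound is $\int |y-\cony_\beta(\mu^Y,V)|(1+|y|)^s\diff\mu^Y$. Lemma \ref{lem:est_weighted_mean}\ref{eq:upper_bnd_cons} gives $|\cony_\beta(\mu^Y,V)|\le C R^{1/p}$, so this integral is controlled by $\int(1+|y|)^{s+1}\diff\mu^Y\le C(R)$, because $p\ge s+2>s+1$. This proves \ref{est:y_var_bas}.

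For \ref{est:y_msr_bas} I would follow the argument of \cite{gerber_hoffmann_vaes_2025_meanfield} and fix an optimal coupling $\pi\in\Gamma(\mu^Y,\nu^Y)$ for $W_p$. Writing $\omega(y):=\omega_{-\beta}^\rE(X,y)$, I decompose
\begin{align*}
\cony_\beta(\mu^Y,X)-\cony_\beta(\nu^Y,X)
=\frac{\int (y-\cony_\beta(\nu^Y,X))\omega(y)\diff\mu^Y(y)}{\mu^Y[\omega]}
=\frac{\iint \big[(y-\bar y)\omega(y)+(\bar y-m)(\omega(y)-\omega(\bar y))\big]\diff\pi(y,\bar y)}{\mu^Y[\omega]}.
\end{align*}
Here $m:=\cony_\beta(\nu^Y,X)$, and the equality uses $\int(\bar y-m)\omega(\bar y)\diff\nu^Y(\bar y)=0$. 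The denominator is again bounded below by $C_0$. The first term is bounded by $e^{\beta c}W_1(\mu^Y,\nu^Y)\le e^{\beta c}W_p(\mu^Y,\nu^Y)$. For the second term, \ref{ass:lip} applied at $(X,y)$ and $(X,\bar y)$ gives
\begin{align*}
|\omega(y)-\omega(\bar y)|\le C(1+|X|)^s(1+|y|+|\bar y|)^s|y-\bar y|.
\end{align*}
Since $|m|\le C(R)$, the second term is at most
\begin{align*}
C(1+|X|)^s\iint (1+|y|+|\bar y|)^{s+1}|y-\bar y|\diff\pi.
\end{align*}

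The main obstacle is this last integral, because it has to be bounded by $W_p$ with exactly the exponent $p$. I would apply Hölder's inequality with exponents $p'=p/(p-1)$ and $p$, which yields
\begin{align*}
\Big(\iint(1+|y|+|\bar y|)^{(s+1)p'}\diff\pi\Big)^{1/p'}W_p(\mu^Y,\nu^Y).
\end{align*}
The first factor is finite and depends only on $R$ provided $(s+1)p/(p-1)\le p$, that is, $p\ge s+2$. This is precisely the hypothesis of the lemma, and it is the point where the threshold $p\ge s+2$ enters. Collecting the constants, which depend on $\beta,c,C,s,p,R$, then gives \ref{est:y_msr_bas}.
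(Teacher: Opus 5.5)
Your proof is correct and follows essentially the same route as the paper. Both arguments use the lower bound on the denominator from \ref{ass:bnd} and the polynomial-growth Lipschitz bound \ref{ass:lip} on the weights. For \ref{est:y_msr_bas} both also use a coupling together with H\"older's inequality with exponents $p/(p-1)$ and $p$, which is exactly where $p\ge s+2$ enters. The only difference is cosmetic: you center at the other consensus point, and this merges the paper's separate ``numerator difference'' and ``denominator difference'' terms into a single integral.
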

    \begin{proof}
        First we prove \ref{est:y_var_bas}. Thus, we estimate
        \begin{align}
            \label{eq:stab_est_num_cons_diff_var_start}
            \begin{split}
                &|\cony_\beta(\mu^Y,U)-\cony_\beta(\mu^Y,V)|\\
            &\leq \frac{|\mu^Y [y\, \omega_{-\beta}(U,y)]-\mu^Y [y\, \omega_{-\beta}(V,y)]|}{\mu^Y [\omega_{-\beta}(U,y)]}+\mu^Y [y\, \omega_{-\beta}(V,y)]\,\frac{|\mu^Y [\omega_{-\beta}(U,y)]-\mu^Y [\omega_{-\beta}(V,y)]|}{\mu^Y [\omega_{-\beta}(U,y)]\mu^Y [\omega_{-\beta}(V,y)]}.
            \end{split}
        \end{align}
        We can bound all terms except for the differences in \eqref{eq:stab_est_num_cons_diff_var_start} by using \ref{eq:lower_bnd_weight} and \ref{eq:upper_bnd_weighted_mean} of Lemma \ref{lem:est_weighted_mean} respectively. Hence, we only need to bound the differences. To that end, we compute
        \begin{align*}
            |\mu^Y [y\, \omega_{-\beta}(U,y)]-\mu^Y [y\, \omega_{-\beta}(V,y)]|\leq \int_{\RR^{d_2}} |y|\,| \omega_{-\beta}(U,y)-\omega_{-\beta}(V,y)|\diff\mu^Y(y).
        \end{align*}
        We use \ref{ass:lip} and Jensen's inequality and find
        \begin{align*}
            &\int_{\RR^{d_2}} |y|\,| \omega_{-\beta}(U,y)-\omega_{-\beta}(V,y)|\diff\mu^Y(y)\\
            &\leq C\int_{\RR^{d_2}} |y|\,(1+|U|+|V|+|y|)^s \,|U-V|\diff\mu^Y(y)\\
            &\leq C\left(\int_{\RR^{d_2}} |y|^{s+1} \diff\mu^Y(y)+\int_{\RR^{d_2}} |y| \diff\mu^Y(y)\,(1+|U|+|V|)^s\right)\,|U-V|\\
            &\leq C\,(1+|U|+|V|)^s\;|U-V|.
        \end{align*}
        Here $C>0$ is a constant that depends on $p$ and $R$ and changes line from line. We bound the other numerator in \eqref{eq:stab_est_num_cons_diff_var_start} similarly. By combining all inequalities we see that \ref{est:y_var_bas} holds.

        Next, we prove \ref{est:y_msr_bas}. Thus, we estimate 
        \begin{align}
            \label{eq:stab_est_cons_diff_meas_start}
            \begin{split}
                &|\cony_\beta(\mu^Y,X)-\cony_\beta(\nu^Y,X)|\\
            &\leq \frac{|\mu^Y [y\, \omega_{-\beta}(X,y)]-\nu^Y [y\, \omega_{-\beta}(X,y)]|}{\mu^Y [\omega_{-\beta}(X,y)]}+\nu^Y [y\, \omega_{-\beta}(X,y)]\,\frac{|\mu^Y [\omega_{-\beta}(X,y)]-\nu^Y [\omega_{-\beta}(X,y)]|}{\mu^Y [\omega_{-\beta}(X,y)]\nu^Y [\omega_{-\beta}(X,y)]}.
            \end{split}
        \end{align}
        We can bound all the terms apart from the differences in \eqref{eq:stab_est_cons_diff_meas_start} by using \ref{eq:lower_bnd_weight} and \ref{eq:upper_bnd_weighted_mean} of Lemma \ref{lem:est_weighted_mean} respectively. Hence, we only need to bound the numerators. To that end, for any coupling $\gamma\in\Gamma(\mu^Y,\nu^Y)$ we compute
        \begin{align}
            \label{eq:stab_est_num_cons_diff_meas}
            \begin{split}
                &|\mu^Y [y\, \omega_{-\beta}(X,y)]-\nu^Y [y\, \omega_{-\beta}(X,y)]|\\
            &\leq \int_{\RR^{d_2}\times \RR^{d_2}}|u\, \omega_{-\beta}(X,u)-v\, \omega_{-\beta}(X,v)|\diff\gamma(u,v)\\
            & \leq \int_{\RR^{d_2}\times \RR^{d_2}}\omega_{-\beta}(X,u)\,|u -v|\diff\gamma(u,v)+\int_{\RR^{d_2}\times \RR^{d_2}}|v|\,|\omega_{-\beta}(X,u)-\omega_{-\beta}(X,v)|\diff\gamma(u,v).
            \end{split}
        \end{align}
        For the first term on the RHS in \eqref{eq:stab_est_num_cons_diff_meas}, we use \ref{ass:bnd} and Jensen's inequality to derive the bound
        \begin{align*}
            \int_{\RR^{d_2}\times \RR^{d_2}}\omega_{-\beta}(X,u)\,|u -v|\diff\gamma(u,v)
            \leq e^{\beta c}\left(\int_{\RR^{d_2}\times \RR^{d_2}} |u -v|^p \diff\gamma(u,v) \right)^{\frac{1}{p}}.
        \end{align*}
        For the second term on the RHS in \eqref{eq:stab_est_num_cons_diff_meas}, we use \ref{ass:lip}, the Hölder inequality, and that $\frac{(s+1)p}{p-1}\leq p$
        \begin{align*}
            &\int_{\RR^{d_2}\times \RR^{d_2}}|v|\,|\omega_{-\beta}(X,u)-\omega_{-\beta}(X,v)|\diff\gamma(u,v)\\
            & \leq C\int_{\RR^{d_2}\times \RR^{d_2}} (1+|u|+|v|)^{s+1} |u-v|\diff\gamma(u,v)+C|X|^s\int_{\RR^{d_2}\times \RR^{d_2}} |v|\,|u-v|\diff\gamma(u,v)\\
            & \leq C\left(1+|X|^s\right)\left(\int_{\RR^{d_2}\times \RR^{d_2}} |u -v|^p \diff\gamma(u,v) \right)^{\frac{1}{p}}.
        \end{align*}
        Here $C>0$ is a constant that depends on $p$ and $R$ and changes from line to line. We derive a similar estimate for the other difference in \eqref{eq:stab_est_cons_diff_meas_start}. Thus, by taking the infimum over all couplings $\gamma$, we see that \ref{est:y_msr_bas} holds.
    \end{proof}

    \begin{lemma}[Basic Stability Estimate for $\conx_{\alpha,\beta}$]
        \label{lemma:basic_stability_estimate}
        Let Assumptions \ref{ass:bnd} and \ref{ass:lip} hold. Let $R>0$ and $p\geq 2(s+1)$. Then, there exists $C>0$ depending on $R$ and $p$, such that for any pairs $\mu^X,\nu^X\in\rP_{p,R}(\RR^{d_1})$ and $\mu^Y,\nu^Y\in \rP_{p,R}(\RR^{d_2})$ the following inequality holds
        \begin{align*}
            |\conx_{\alpha,\beta}(\mu^X, \mu^Y)-\conx_{\alpha,\beta}(\nu^X, \nu^Y)|
            \leq C (W_p(\mu^X,\nu^X)+W_p(\mu^Y,\nu^Y))\,.
        \end{align*}

    \end{lemma}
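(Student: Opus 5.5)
We follow the template of the proof of Lemma \ref{lem:stab_est_y}. The difficulty is that the weight inside $\conx_{\alpha,\beta}$ is $w_\mu(x):=\omega_\alpha(x,\cony_\beta(\mu^Y,x))$, which depends on $\mu^Y$ as well as on $x$. First we split the difference of quotients exactly as in \eqref{eq:stab_est_cons_diff_meas_start}:
\begin{align*}
|\conx_{\alpha,\beta}(\mu^X,\mu^Y)-\conx_{\alpha,\beta}(\nu^X,\nu^Y)|
\leq \frac{|\mu^X[x\,w_\mu]-\nu^X[x\,w_\nu]|}{\mu^X[w_\mu]}
+\nu^X[x\,w_\nu]\,\frac{|\mu^X[w_\mu]-\nu^X[w_\nu]|}{\mu^X[w_\mu]\,\nu^X[w_\nu]}.
\end{align*}
The denominators are bounded below by \ref{eq:lower_bnd_weight}, and $\nu^X[x\,w_\nu]\le C_1R^{1/p}$ by \ref{eq:upper_bnd_weighted_mean}. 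So it suffices to bound the two numerators. Both are handled identically, so we describe the first.

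Next we insert the intermediate term $\nu^X[x\,w_\mu]$ and split into a measure-in-$x$ part and a weight part. The weight part is controlled with \ref{est:y_msr_bas}. By \ref{ass:bnd}, $|\cony_\beta|$ is bounded via \ref{eq:upper_bnd_cons}, and $e^{-\alpha\rE}$ is Lipschitz on $[-c,c]$. Combining this with \ref{ass:lip} gives
\begin{align*}
|w_\mu(x)-w_\nu(x)|\le C(1+|x|+|\cony_\beta(\mu^Y,x)|+|\cony_\beta(\nu^Y,x)|)^s\,|\cony_\beta(\mu^Y,x)-\cony_\beta(\nu^Y,x)|\le C(1+|x|)^{2s}W_p(\mu^Y,\nu^Y).
\end{align*}
Integrating against $|x|\,\diff\nu^X$ requires a moment of order $2s+1\le p$. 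This holds by the hypothesis $p\ge 2(s+1)$, which also gives $p\ge s+2$ so that Lemma \ref{lem:stab_est_y} applies. Hence $|\nu^X[x(w_\mu-w_\nu)]|\le C\,W_p(\mu^Y,\nu^Y)$.

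For the measure part $|\mu^X[x\,w_\mu]-\nu^X[x\,w_\mu]|$ we take any coupling $\gamma\in\Gamma(\mu^X,\nu^X)$ and write $u\,w_\mu(u)-v\,w_\mu(v)=w_\mu(u)(u-v)+v\,(w_\mu(u)-w_\mu(v))$. The first term is bounded by $e^{\alpha c}\int|u-v|\,\diff\gamma$. For the second term we need an $x$-Lipschitz bound on $w_\mu$. Using \ref{ass:lip} on the pair $(u,\cony_\beta(\mu^Y,u))$, $(v,\cony_\beta(\mu^Y,v))$ together with \ref{est:y_var_bas} gives
\begin{align*}
|w_\mu(u)-w_\mu(v)|\le C(1+|u|+|v|)^{s}\bigl(|u-v|+(1+|u|+|v|)^s|u-v|\bigr)\le C(1+|u|+|v|)^{2s}|u-v|,
\end{align*}
where the uniform bound on $\cony_\beta$ is again used. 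Therefore the integrand is at most $C(1+|u|+|v|)^{2s+1}|u-v|$. Hölder's inequality with exponents $p$ and $p/(p-1)$ then requires $(2s+1)p/(p-1)\le p$, i.e.\ $p\ge 2s+2$, which is precisely the assumption. The moments are bounded by $R$, so this part is at most $C\,W_p(\mu^X,\nu^X)$ after taking the infimum over $\gamma$.

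The main obstacle is this composed Lipschitz estimate for $x\mapsto\omega_\alpha(x,\cony_\beta(\mu^Y,x))$. The growth factors compound to the power $2s$: one factor $s$ comes from \ref{ass:lip} on $\rE$ and another from \ref{est:y_var_bas}. This compounding is what forces the exponent condition $p\ge 2(s+1)$, and the work lies in checking that the Hölder bookkeeping closes with exactly this exponent. The same argument applies to the second numerator with $x$ replaced by $1$, which needs fewer moments. Collecting all the bounds yields the claim.
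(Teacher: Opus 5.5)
Your proposal is correct and is essentially the paper's proof. The paper's four-term telescoping (I)--(IV) is your quotient split followed by an intermediate term: the paper inserts $\mu^X[x\,w_\nu]$ rather than your $\nu^X[x\,w_\mu]$, which is a symmetric choice. As in your argument, the weight part is handled by \ref{est:y_msr_bas}, and the coupling part by \ref{est:y_var_bas} together with H\"older, using $(2s+1)p/(p-1)\le p$.
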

    \begin{proof}
        The goal of this proof is to derive estimates to the following four terms 
        \begin{align}
            \notag
            &|\conx_{\alpha,\beta}(\mu^X,\mu^Y)-\conx_{\alpha,\beta}(\nu^X,\nu^Y)|\\
            &\leq 
            \left\vert
            \frac{\mu^X[x\,\omega_\alpha(\cdot,\cony_\beta(\mu^Y,\cdot))]}{\mu^X[\omega_\alpha(\cdot,\cony_\beta(\mu^Y,\cdot))]}
            -\frac{\mu^X[x\,\omega_\alpha(\cdot,\cony_\beta(\nu^Y,\cdot))]}{\mu^X[\omega_\alpha(\cdot,\cony_\beta(\mu^Y,\cdot))]}
            \right\vert\label{eq:conx_est_bound_one}
            \tag{I}\\[.25cm]
            &\phantom{\leq}+\left\vert
            \frac{\mu^X[x\,\omega_\alpha(\cdot,\cony_\beta(\nu^Y,\cdot))]}{\mu^X[\omega_\alpha(\cdot,\cony_\beta(\mu^Y,\cdot))]}
            -\frac{\nu^X[x\,\omega_\alpha(\cdot,\cony_\beta(\nu^Y,\cdot))]}{\mu^X[\omega_\alpha(\cdot,\cony_\beta(\mu^Y,\cdot))]}
            \right\vert\label{eq:conx_est_bound_two}
            \tag{II}\\[.25cm]
            &\phantom{\leq}+\left\vert
            \frac{\nu^X[x\,\omega_\alpha(\cdot,\cony_\beta(\nu^Y,\cdot))]}{\mu^X[\omega_\alpha(\cdot,\cony_\beta(\mu^Y,\cdot))]}-
            \frac{\nu^X[x\,\omega_\alpha(\cdot,\cony_\beta(\nu^Y,\cdot))]}{\mu^X[\omega_\alpha(\cdot,\cony_\beta(\nu^Y,\cdot))]}
            \right\vert\label{eq:conx_est_bound_three}
            \tag{III}\\[.25cm]
            &\phantom{\leq}+\left\vert
            \frac{\nu^X[x\,\omega_\alpha(\cdot,\cony_\beta(\nu^Y,\cdot))]}{\mu^X[\omega_\alpha(\cdot,\cony_\beta(\nu^Y,\cdot))]}-
            \frac{\nu^X[x\,\omega_\alpha(\cdot,\cony_\beta(\nu^Y,\cdot))]}{\nu^X[\omega_\alpha(\cdot,\cony_\beta(\nu^Y,\cdot))]}
            \right\vert.\label{eq:conx_est_bound_four}
            \tag{IV}
        \end{align}
        \underline{\textit{Step 1: Bounding \eqref{eq:conx_est_bound_one}:}} 
        We need to devise an estimate of the form
        \begin{align}
            \label{eq:bound_conx_one}
            \eqref{eq:conx_est_bound_one} \leq C\; W_p(\mu^Y,\nu^Y)
        \end{align}
        for some constant $C>0$ depending on $p$ and $R$. Thus, we first devise the representation
        \begin{align*}
            \eqref{eq:conx_est_bound_one} = \frac{|\mu^X[x\,\omega_\alpha(\cdot,\cony_\beta(\mu^Y,\cdot))]-\mu^X[x\,\omega_\alpha(\cdot,\cony_\beta(\nu^Y,\cdot))]|}{\mu^X[\omega_\alpha(\cdot,\cony_\beta(\mu^Y,\cdot))]}.
        \end{align*}
        We can bound the denominator using \ref{eq:lower_bnd_weight} of Lemma \ref{lem:est_weighted_mean}. Then, for the numerator we compute using \ref{ass:bnd} and \ref{ass:lip}
       \begin{align*}
            &|\mu^X[x\,\omega_\alpha(\cdot,\cony_\beta(\mu^Y,\cdot))]-\mu^X[x\,\omega_\alpha(\cdot,\cony_\beta(\nu^Y,\cdot))]|\\
            &\leq \int_{\RR^{d_1}}|x\,\omega_\alpha(x,\cony_\beta(\mu^Y,x))-x\,\omega_\alpha(x,\cony_\beta(\nu^Y,x))|\diff\mu^X(x)\\
            &\leq C\int_{\RR^{d_1}}|x|\,(1+|x|+|\cony_\beta(\nu^Y,x)|+|\cony_\beta(\mu^Y,x)|)^s\\
           & \phantom{\leq C\int_{\RR^{d_1}}}\times|\cony_\beta(\mu^Y,x)-\cony_\beta(\nu^Y,x)|\diff\mu^X(x).
        \end{align*}
        We use \ref{eq:upper_bnd_cons} of Lemma \ref{lem:est_weighted_mean} and \ref{est:y_msr_bas} of Lemma \ref{lem:stab_est_y} to find a constant $C>0$, depending only on $p$ and $R$, such that
        \begin{align*}
            &\int_{\RR^{d_1}}|x|\,(1+|x|+|\cony_\beta(\nu^Y,x))|+|\cony_\beta(\mu^Y,x))|)^s\\
            &\phantom{\int_{\RR^{d_1}}}\times|\cony_\beta(\mu^Y,x)-\cony_\beta(\nu^Y,x)|\diff\mu^X(x)\\
            &\leq C\;W_p(\mu^Y,\nu^Y) \int_{\RR^{d_1}} (1+|x|)^{2s+1}\diff\mu^X(x).
        \end{align*}
        By using the Jensen inequality and the fact that $2s+1\leq p$, we find that the inequality \eqref{eq:bound_conx_one} holds true.
        
        \noindent \underline{\textit{Step 2: Bounding \eqref{eq:conx_est_bound_two}:}} 
        We need to devise an estimate of the form
        \begin{align}
            \label{eq:bound_conx_two}
            \eqref{eq:conx_est_bound_two} \leq C\; W_p(\mu^X,\nu^X)
        \end{align}
        for some constant $C>0$ depending on $p$ and $R$. Thus, we first devise the representation
        \begin{align*}
            \eqref{eq:conx_est_bound_two} = \frac{|\mu^X[x\,\omega_\alpha(\cdot,\cony_\beta(\nu^Y,\cdot))]-\nu^X[x\,\omega_\alpha(\cdot,\cony_\beta(\nu^Y,\cdot))]|}{\mu^X[\omega_\alpha(\cdot,\cony_\beta(\mu^Y,\cdot))]}.
        \end{align*}
        We can bound the denominator using \ref{eq:lower_bnd_weight} of Lemma \ref{lem:est_weighted_mean}. Then for the numerator we take any coupling $\gamma\in \Gamma(\mu^X,\nu^X)$ and compute
        \begin{align}
            \label{eq:conx_est_bound_two_bound_num}
            \begin{split}
                &|\mu^X[x\,\omega_\alpha(\cdot,\cony_\beta(\nu^Y,\cdot))]-\nu^X[x\,\omega_\alpha(\cdot,\cony_\beta(\nu^Y,\cdot))]|\\
            &\leq \int_{\RR^{d_1}\times\RR^{d_1} }|x\,\omega_\alpha(x,\cony_\beta(\nu^Y,x))-y\,\omega_\alpha(y,\cony_\beta(\nu^Y,y))|\diff\gamma(x,y)\\
            &\leq \int_{\RR^{d_1}\times\RR^{d_1} }|x-y|\,\omega_\alpha(x,\cony_\beta(\nu^Y,x))\diff\gamma(x,y)\\
            &\phantom{\leq} +\int_{\RR^{d_1}\times\RR^{d_1} }|y|\, |\omega_\alpha(x,\cony_\beta(\nu^Y,x))-\omega_\alpha(y,\cony_\beta(\nu^Y,y))|\diff\gamma(x,y).
            \end{split}
        \end{align}
        For the first term on the RHS of \eqref{eq:conx_est_bound_two_bound_num}, we use \ref{ass:bnd} and the Jensen inequality to estimate
        \begin{align}
            \label{eq:conx_est_bound_two_part1}
            \begin{split}
                &\int_{\RR^{d_1}\times\RR^{d_1} }|x-y|\,\omega_\alpha(x,\cony_\beta(\nu^Y,x))\diff\gamma(x,y)\leq e^{\alpha c}\; \left(\int_{\RR^{d_1}\times \RR^{d_1}} |x-y|^p\diff \gamma(x,y)\right)^{\frac{1}{p}}.
            \end{split}
        \end{align}
        For the second term on the RHS of \eqref{eq:conx_est_bound_two_bound_num} we use \ref{ass:lip} to bound
        \begin{align*}
            &\int_{\RR^{d_1}\times\RR^{d_1} }|y|\; |\omega_\alpha(x,\cony_\beta(\nu^Y,x))-\omega_\alpha(y,\cony_\beta(\nu^Y,y))|\diff\gamma(x,y)\\
            &\leq C\int_{\RR^{d_1}\times\RR^{d_1} }|y|\; (1+|x|+|y|+|\cony_\beta(\nu^Y,x)|+|\cony_\beta(\nu^Y,y)|)^s\\
            &\phantom{\leq C\int_{\RR^{d_1}\times\RR^{d_1} }}\times \big(|x-y|+|\cony_\beta(\nu^Y,x)-\cony_\beta(\nu^Y,y)|\diff\gamma(x,y).
        \end{align*}
        Then we use \ref{eq:upper_bnd_cons} of Lemma \ref{lem:est_weighted_mean} and \ref{est:y_var_bas} of Lemma \ref{lem:stab_est_y} to further estimate the above term by
        \begin{align*}
            &\int_{\RR^{d_1}\times\RR^{d_1} }|y|\; (1+|x|+|y|+|\cony_\beta(\nu^Y,x)|+|\cony_\beta(\nu^Y,y)|)^s\\
            &\phantom{\int_{\RR^{d_1}\times\RR^{d_1} }}\times\big(|x-y|+|\cony_\beta(\nu^Y,x)-\cony_\beta(\nu^Y,y)|\diff\gamma(x,y)\\
            &\leq 
            C\int_{\RR^{d_1}\times\RR^{d_1} } (1+|x|+|y|)^{2s+1} \; |x-y|\diff \gamma(x,y)
        \end{align*}
        for some constant $C>0$ depending only only on $p$ and $R$. Next, we use the Hölder inequality to compute
        \begin{align}
            \label{eq:conx_est_bound_two_part2}
            \begin{split}
                &\int_{\RR^{d_1}\times\RR^{d_1} } (1+|x|+|y|)^{2s+1} \, |x-y|\diff \gamma(x,y)\\
            &
            \leq \left(\int_{\RR^{d_1}\times\RR^{d_1} } (1+|x|+|y|)^{\frac{(2s+1)p}{p-1}}\diff \gamma(x,y)\right)^{\frac{p-1}{p}}\times\left(\int_{\RR^{d_1}\times\RR^{d_1} } |x-y|^p\diff \gamma(x,y)\right)^{\frac{1}{p}}.
            \end{split}
        \end{align}
        The claim now follows by using $\frac{(2s+1)p}{p-1}\leq p$ in \eqref{eq:conx_est_bound_two_part2}, combining with \eqref{eq:conx_est_bound_two_part1} and minimizing over all $\gamma\in \Gamma(\mu^X,\nu^X)$.

        \noindent\underline{\textit{Step 3: Bounding \eqref{eq:conx_est_bound_three} and \eqref{eq:conx_est_bound_four}:}} We compute
        \begin{align*}
            \eqref{eq:conx_est_bound_three}=|\nu^X[x\,\omega_\alpha(\cdot,\cony_\beta(\nu^Y,\cdot))]|\,\frac{|\mu^X[\omega_\alpha(\cdot,\cony_\beta(\mu^Y,\cdot))]-\mu^X[\omega_\alpha(\cdot,\cony_\beta(\nu^Y,\cdot))] |}{\mu^X[\omega_\alpha(\cdot,\cony_\beta(\mu^Y,\cdot))]\, \mu^X[\omega_\alpha(\cdot,\cony_\beta(\nu^Y,\cdot))]}.
        \end{align*}
        We only need to bound the difference, since we bound the denominator from below with \ref{eq:lower_bnd_weight} and the numerator from above with \ref{eq:upper_bnd_weighted_mean}, both from Lemma \ref{lem:est_weighted_mean}. But the difference we estimate in an analogue way as for \eqref{eq:conx_est_bound_one}. We estimate \eqref{eq:conx_est_bound_four} in a similar way, but we follow the same procedure as for \eqref{eq:conx_est_bound_two}. With this we conclude the proof.
    \end{proof}
    
    \begin{corollary}[Stability estimate for $\cony_\beta$]
    	\label{cor:stab_est_cony}
    	Suppose that Assumptions \ref{ass:bnd} and \ref{ass:lip} hold. Then, for all $R>0$ and $p\geq s+2$, there exists a constant $C>0$ depending on $R$ and $p$ such that the followin asserstions holds true:
        \begin{enumerate}[label=(S\arabic*)]
            \item\label{est:y_msr} For all $\mu^Y\in \rP_{p,R}(\RR^{d_2})$, $\nu^Y\in\rP_p(\RR^{d_2})$ and $U\in\RR^{d_1}$ we have
        	\begin{align*}
        		|\cony_\beta(\mu^Y, U)-\cony_\beta(\nu^Y,U)|
        		\leq C\,(1+|U|)^s\;W_p(\mu^Y,\nu^Y).
        	\end{align*}
            \item\label{est:y_var_msr} For all $\mu^Y\in \rP_{p,R}(\RR^{d_2})$, $\nu^Y\in\rP_p(\RR^{d_2})$ and $U,V\in\RR^{d_1}$ with $|U|\leq R$ we have
        	\begin{align*}
        		|\cony_\beta(\mu^Y, U)-\cony_\beta(\nu^Y,V)|
        		\leq C\,\big(W_p(\mu^Y,\nu^Y)+|U-V|\big).
        	\end{align*}
        \end{enumerate}
    \end{corollary}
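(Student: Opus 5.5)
The corollary differs from \ref{est:y_msr_bas} of Lemma \ref{lem:stab_est_y} only in that $\nu^Y$ is no longer assumed to have $p$-moment bounded by $R$. The natural plan is therefore to reduce to the lemma by a case split on the size of $W_p(\mu^Y,\nu^Y)$, and then to combine it with \ref{est:y_var_bas} via the triangle inequality to obtain \ref{est:y_var_msr}.

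\textbf{Proof of \ref{est:y_msr}.} Fix $\mu^Y\in\rP_{p,R}(\RR^{d_2})$ and $\nu^Y\in\rP_p(\RR^{d_2})$, and distinguish two cases.

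\emph{Case $W_p(\mu^Y,\nu^Y)\leq 1$.} Take an optimal coupling and apply Minkowski's inequality:
\begin{align*}
\Big(\int_{\RR^{d_2}}|y|^p\diff\nu^Y(y)\Big)^{1/p}\leq R^{1/p}+1 .
\end{align*}
Hence both measures lie in $\rP_{p,R'}(\RR^{d_2})$ with $R':=(R^{1/p}+1)^p$. Lemma \ref{lem:stab_est_y}\ref{est:y_msr_bas} then applies with $R'$ in place of $R$ and gives the claim with a constant depending only on $R$ and $p$.

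\emph{Case $W_p(\mu^Y,\nu^Y)>1$.} Start from the triangle inequality $|\cony_\beta(\mu^Y,U)-\cony_\beta(\nu^Y,U)|\leq|\cony_\beta(\mu^Y,U)|+|\cony_\beta(\nu^Y,U)|$ and use \ref{eq:upper_bnd_cons} of Lemma \ref{lem:est_weighted_mean}. The constants $C_0,C_1$ there depend only on $\beta$ and $c$ (namely $C_0=e^{-\beta c}$, $C_1=e^{\beta c}$), not on the measure, and $\cony_\beta$ is a weighted mean. This gives
\begin{align*}
|\cony_\beta(\mu^Y,U)|\leq e^{2\beta c}R^{1/p},\qquad |\cony_\beta(\nu^Y,U)|\leq e^{2\beta c}\big(R^{1/p}+W_p(\mu^Y,\nu^Y)\big).
\end{align*}
Since $W_p>1$, the right-hand side is bounded by $C\,W_p(\mu^Y,\nu^Y)\leq C(1+|U|)^s W_p(\mu^Y,\nu^Y)$.

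\textbf{Proof of \ref{est:y_var_msr}.} Split
\begin{align*}
|\cony_\beta(\mu^Y,U)-\cony_\beta(\nu^Y,V)|\leq|\cony_\beta(\mu^Y,U)-\cony_\beta(\mu^Y,V)|+|\cony_\beta(\mu^Y,V)-\cony_\beta(\nu^Y,V)|.
\end{align*}
The second term is handled by \ref{est:y_msr}, but it carries a factor $(1+|V|)^s$, and $V$ is unbounded. The first term, by \ref{est:y_var_bas}, carries a factor $(1+|U|+|V|)^s$, which is again unbounded in $V$. This is the main obstacle, and I would handle it with a second case split, now on $|U-V|$.

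\emph{Case $|U-V|\leq 1$.} Then $|V|\leq R+1$, so both factors are bounded by constants depending only on $R$, and the estimate follows.

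\emph{Case $|U-V|>1$.} Bound the difference crudely by $|\cony_\beta(\mu^Y,U)|+|\cony_\beta(\nu^Y,V)|$. The first term is at most $C$ by the previous bound. For the second, the weighted-mean bound (uniform in the spatial argument) gives at most $C\big(R^{1/p}+W_p(\mu^Y,\nu^Y)\big)$. Altogether this is at most $C\big(1+W_p(\mu^Y,\nu^Y)\big)\leq C\big(|U-V|+W_p(\mu^Y,\nu^Y)\big)$.

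Alternatively, the second case could be avoided by ordering the triangle inequality through $\cony_\beta(\nu^Y,U)$. That route requires $\nu^Y$ to have bounded moments in \ref{est:y_var_bas}, which again leads back to the same split on $W_p$. I would therefore keep the crude large-distance bounds, which are the simplest.
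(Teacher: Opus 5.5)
Your proposal is correct. For \ref{est:y_msr} it follows the paper's argument in direct form. The paper argues by contradiction: along a bad sequence, $W_p(\nu_n^Y,\delta_0)$ must blow up, since otherwise \ref{est:y_msr_bas} applies with an enlarged radius; it then uses \ref{eq:upper_bnd_cons} and the reverse triangle inequality. Your split on $W_p(\mu^Y,\nu^Y)\leq 1$ is the same dichotomy, but with explicit constants. Your remark that $C_0=e^{-\beta c}$ and $C_1=e^{\beta c}$ do not depend on the measures is exactly what both versions need.

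For \ref{est:y_var_msr} you take a genuinely different route. The paper lifts to the product measures $\mu^Y\otimes\delta_U$ and $\nu^Y\otimes\delta_V$ on $\RR^{d_2}\times\RR^{d_1}$. It checks that the first has $p$-moment controlled by $R$, which is where $|U|\leq R$ enters. It then invokes the external stability estimate \cite[Corollary 3.3]{gerber_hoffmann_vaes_2025_meanfield} for weighted means, treating the spatial variable as one more coordinate of the measure with cost $-\rE$. Since the $\RR^{d_2}$-block of the lifted consensus point is $\cony_\beta$, and the lifted Wasserstein distance is at most $C\,(W_p(\mu^Y,\nu^Y)+|U-V|)$, the claim follows in one line.

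You instead stay inside the paper:
\begin{enumerate}[label=(\roman*)]
\item You pass through $\cony_\beta(\mu^Y,V)$ by the triangle inequality.
\item You use \ref{est:y_var_bas} for the first piece, which is legitimate since only $\mu^Y$ needs bounded moments there, and \ref{est:y_msr} for the second.
\item When $|U-V|\leq 1$, you have $|V|\leq R+1$, which keeps the polynomial factors bounded.
\item When $|U-V|>1$, the crude weighted-mean bound gives $C\,(1+W_p(\mu^Y,\nu^Y))\leq C\,(|U-V|+W_p(\mu^Y,\nu^Y))$.
\end{enumerate}

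The lifting trick is shorter and explains conceptually why the consensus map is jointly Lipschitz in $(\mu^Y,U)$. However, it depends on an outside result whose hypotheses must be rechecked on the joint space. Your argument is self-contained, uses only Lemmas \ref{lem:est_weighted_mean} and \ref{lem:stab_est_y}, and makes the dependence of the constant on $R$ explicit.
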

    \begin{proof}
    We first prove \ref{est:y_msr}. Suppose for a contradiction that the assumption were false. Then, there exist sequences $\{(\mu_n^Y,\nu_n^Y)\}_{n\in\NN}\subseteq \rP_{p,R}(\RR^{d_2})\times \rP_p(\RR^{d_2})$ and $\{U_n\}_{n\in\NN}\subseteq \RR^{d_1}$ with
        \begin{align*}
            \frac{|\cony_\beta(\mu_n^Y,U_n)-\cony_\beta(\nu_n^Y,U_n)|}{(1+|U_n|)^s\, W_p(\mu_n^Y,\nu_n^Y)}\geq n.
        \end{align*}
        Clearly, $W_p(\nu_n^Y,\delta_0)\to \infty$, otherwise the above claim would be false by \ref{est:y_msr_bas} from Lemma \ref{lem:stab_est_y}. Then note by the (reverse) triangle inequality and \ref{eq:upper_bnd_cons} from Lemma \ref{lem:est_weighted_mean} we derive the contradiction
        \begin{align*}
            n\leq C\,\frac{W_p(\nu_n^Y,\delta_0)+R}{(1+|U_n|)^s\, \big(W_p(\nu_n^Y,\delta_0)-R\big)},
        \end{align*}
        where $C>0$ is some constant depending on $p$. Thus the claim must hold by proof of contradiction. Next we prove \ref{est:y_var_msr}. Let us define the probability measure $\bar{\mu}^Y:=\mu^Y\otimes\delta_U$ and $\bar{\nu}^Y:=\nu^Y\otimes\delta_V$. We compute the $p$-mean of $\bar{\mu}^Y$ by using the Jensen inequality
    \begin{align*}
        \int_{\RR^{d_2}\times \RR^{d_1}} |z|^p \diff \bar{\mu}^Y(z)
        \leq 2^{p-1}\left(\int_{\RR^{d_2}} |y|^p \diff \mu^Y(y)+|U|^p\right)
        \leq 2^p R. 
    \end{align*}
    Then from \cite[Corollary 3.3]{gerber_hoffmann_vaes_2025_meanfield} we know there exists a constant $C>0$ depending only on $R$ and $p$ such that
    	\begin{align*}
    		&|\cony_\beta(\mu^Y,U)-\cony_\beta(\nu^Y,V)|\leq C W_p(\bar \mu^Y,\bar \nu^Y)
    		\leq C 2^{\frac{p-1}{p}}\big( W_p(\mu^Y,\nu^Y)+|U-V|\big).
    	\end{align*}
    With this we conclude the proof.
    \end{proof}

    \begin{corollary}[Stability Estimate for $\conx_{\alpha,\beta}$]
        \label{cor:stab_est_conx}
        Let $R>0$ and $p\geq 2(s+1)$. Then there exists a constant $C>0$ depending on $R$ and $p$, such that for all $\mu^X\in\rP_{p,R}(\RR^{d_1})$, $\mu^Y\in\rP_{p,R}(\RR^{d_2})$, $\nu^X\in \rP_p(\RR^{d_1} )$ and $\nu^Y\in \rP_p(\RR^{d_2})$ we have
        \begin{align*}
            |\conx_{\alpha,\beta}(\mu^X,\mu^Y)-\conx_{\alpha,\beta}(\nu^X,\nu^Y)|
            \leq C (W_p(\mu^X,\nu^X)+W_p(\mu^Y,\nu^Y)).
        \end{align*}
    \end{corollary}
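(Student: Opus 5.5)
The plan is to mirror the contradiction argument used for \ref{est:y_msr} in Corollary \ref{cor:stab_est_cony}: we upgrade the local estimate of Lemma \ref{lemma:basic_stability_estimate}, where both pairs lie in $\rP_{p,R}$, to the case where only $(\mu^X,\mu^Y)$ is controlled. We split according to whether $(\nu^X,\nu^Y)$ also has a controlled $p$-moment.

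\emph{Case 1.} Suppose $W_p(\nu^X,\delta_0)^p+W_p(\nu^Y,\delta_0)^p\le 2^{p}(R+1)$, or any fixed enlargement $R'$ of $R$. Then $\nu^X\in\rP_{p,R'}(\RR^{d_1})$ and $\nu^Y\in\rP_{p,R'}(\RR^{d_2})$. We apply Lemma \ref{lemma:basic_stability_estimate} with radius $\max(R,R')$, which gives the claim with a constant depending only on $R$ and $p$.

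\emph{Case 2.} Otherwise, $W_p(\nu^X,\delta_0)$ or $W_p(\nu^Y,\delta_0)$ is large. Since $\mu^X,\mu^Y$ have $p$-moment at most $R$, we have $W_p(\mu^\cdot,\delta_0)\le R^{1/p}$. The reverse triangle inequality then yields
\[
W_p(\mu^X,\nu^X)+W_p(\mu^Y,\nu^Y)\ \ge\ W_p(\nu^X,\delta_0)+W_p(\nu^Y,\delta_0)-2R^{1/p},
\]
and this is comparable to $W_p(\nu^X,\delta_0)+W_p(\nu^Y,\delta_0)$ once the latter exceeds, say, $4R^{1/p}+1$. On the other side, \ref{eq:upper_bnd_cons} of Lemma \ref{lem:est_weighted_mean} bounds the left-hand side by
\[
|\conx_{\alpha,\beta}(\mu^X,\mu^Y)|+|\conx_{\alpha,\beta}(\nu^X,\nu^Y)|\le \tfrac{C_1}{C_0}\big(R^{1/p}+W_p(\nu^X,\delta_0)\big).
\]
Dividing, the ratio is bounded by a constant depending on $R$ and $p$, which gives the estimate.

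The one point to verify is that $C_0,C_1$ in Lemma \ref{lem:est_weighted_mean} are independent of the measures. This holds because \ref{ass:bnd} gives $e^{-\alpha c}\le\omega_\alpha\le e^{\alpha c}$ uniformly, including inside $\omega_\alpha(\cdot,\cony_\beta(\nu^Y,\cdot))$, whatever $\nu^Y$ is. The only subtlety is that the bound on $\conx$ involves only the $X$-moment, while the threshold in Case 2 may be exceeded by the $Y$-moment alone. In that situation $W_p(\nu^Y,\delta_0)$ is large, and $W_p(\mu^Y,\nu^Y)\ge W_p(\nu^Y,\delta_0)-R^{1/p}$ is bounded below by a positive constant. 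The left-hand side is at most $\tfrac{C_1}{C_0}\big(R^{1/p}+W_p(\nu^X,\delta_0)\big)\le C\big(1+W_p(\mu^X,\nu^X)\big)$, so the constant term is absorbed by $W_p(\mu^Y,\nu^Y)\gtrsim 1$. Handling this cross-case carefully is the main, though mild, obstacle. It is most cleanly done with the explicit inequalities above rather than phrasing the argument by contradiction along sequences.
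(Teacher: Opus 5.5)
Your proposal is correct and is essentially the paper's argument, run directly instead of by contradiction: the paper also invokes Lemma \ref{lemma:basic_stability_estimate} when the $p$-moments of $\nu^X,\nu^Y$ stay bounded, and otherwise bounds the ratio via \ref{eq:upper_bnd_cons} of Lemma \ref{lem:est_weighted_mean} and the reverse triangle inequality. Your ``or'' in Case~2 is the right formulation, since only one moment needs to be large, and the cross case needs no separate care if the threshold is $S:=W_p(\nu^X,\delta_0)+W_p(\nu^Y,\delta_0)\geq 4R^{1/p}$: then the numerator is at most $\tfrac{C_1}{C_0}(R^{1/p}+S)$ and the denominator at least $S/2$, while $S<4R^{1/p}$ puts you in Case~1 with $R'=4^pR$.
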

    \begin{proof}
        Suppose that the claim does not hold true, then there exist sequences
        \begin{align*}
        \{(\mu_n^X,\mu_n^Y)\}_{n\in\NN}&\subseteq \rP_{p,R}(\RR^{d_1})\times \rP_{p,R}(\RR^{d_2}),\\
        \{(\nu_n^X,\nu_n^Y)\}_{n\in\NN}&\subseteq \rP_p(\RR^{d_1})\times \rP_p(\RR^{d_2})
        \end{align*}
        that satisfy
        \begin{align*}
            \frac{|\conx_{\alpha,\beta}(\mu_n^X,\mu_n^Y)-\conx_{\alpha,\beta}(\nu_n^X,\nu_n^Y)|}{W_p(\mu_n^X,\nu_n^X)+W_p(\mu_n^Y,\nu_n^Y)}\geq n.
        \end{align*}
        Clearly, $W_p(\nu_n^X,\delta_0)\wedge W_p(\nu_n^Y,\delta_0)\to \infty$, otherwise the above claim would be false by Lemma \ref{lemma:basic_stability_estimate}. Then note by the (reverse) triangle inequality and \ref{eq:upper_bnd_cons} of Lemma \ref{lem:est_weighted_mean} we derive the contradiction
        \begin{align*}
            n\leq C\cdot\frac{W_p(\nu_n^X,\delta_0)+R}{W_p(\nu_n^X,\delta_0)+ W_p(\nu_n^Y,\delta_0)-2R}.
        \end{align*}
        Thus the claim must hold by proof of contradiction.
    \end{proof}

    \subsection{Moment estimates for \eqref{eq:cbo} and \eqref{eq:mf_cbo} dynamics}
    \label{section:moment_estimate_for_cbo_dynamics}
    In this section, we control the moments of the particles in \eqref{eq:cbo} and \eqref{eq:mf_cbo}. We begin with the estimation of \eqref{eq:cbo}.
     
    \begin{lemma}[Moment estimates for the empirical measures]
    	\label{lemma:moment_estimates_for_the_empirical_measures}
    	Let Assumptions \ref{ass:bnd} and \ref{ass:lip} hold and let $p\geq 2$, $\bar\rho_0^X\in \rP_p(\RR^{d_1})$, $\bar\rho_0^Y\in \rP_p(\RR^{d_2})$. Let $\{(X^{i},Y^{i})\}_{i\in [N]}$ be solutions to the SDEs \eqref{eq:cbo} with the i.i.d. initial data satisfying $\law(\bX_0^i,\bY_0^i)= \dmf_0^X\otimes \dmf_0^Y$. Then there exists a constant $\kappa>0$ that does not depend on $N$ such that
    	\begin{align*}
    		\sup_{\substack{ i \in [N] }}\EE \left[ \sup_{t \in [0, T]} 
    		\;|X_t^{i}|^p +|Y_t^{i}|^p 
    		\right]\leq \kappa\, \big(\EE|X_0^{1}|^p+\EE|Y_0^{1}|^p\big).
    	\end{align*}
    \end{lemma}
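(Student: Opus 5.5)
We prove the estimate by a Gronwall argument for the function
\begin{align*}
f(t):=\sup_{i\in[N]}\EE\Big[\sup_{r\in[0,t]}|X^i_r|^p+|Y^i_r|^p\Big].
\end{align*}
The key input is the bound \ref{eq:upper_bnd_cons} of Lemma \ref{lem:est_weighted_mean}. Applied to the empirical measures with exponent $p$, it gives pathwise
\begin{align*}
|\conx_{\alpha,\beta}(\dcbo_t^{X,N},\dcbo_t^{Y,N})|^p\leq \Big(\frac{C_1}{C_0}\Big)^p\frac1N\sum_{j=1}^N|X_t^j|^p,\qquad
|\cony_{\beta}(\dcbo_t^{Y,N},X_t^i)|^p\leq \Big(\frac{C_1}{C_0}\Big)^p\frac1N\sum_{j=1}^N|Y_t^j|^p.
\end{align*}
This holds uniformly in the point $X_t^i$, because \ref{ass:bnd} makes $C_0=e^{-\beta c}$ (or $e^{-\alpha c}$) and $C_1$ independent of the base point and of the measures. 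This uniformity is what decouples the $Y$-consensus from the $X$-state. By Theorem \ref{theorem:existence_and_uniqueness_for_cbo} the strong solutions exist. To be rigorous, I would first work with the stopping times $\tau_M:=\inf\{t:\max_i(|X^i_t|+|Y^i_t|)\geq M\}$, so that every quantity below is finite, and then let $M\to\infty$ by monotone convergence at the end.

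\textbf{Step 1.} Write $X^i_{t}=X^i_0-\lambda\int_0^{t}(X^i_r-\conx_r)\diff r+\sigma\int_0^t D(X^i_r-\conx_r)\diff B^{X,i}_r$. Then
\begin{align*}
\EE\sup_{r\le t}|X^i_r|^p\leq 3^{p-1}\Big(\EE|X_0^i|^p+\lambda^p t^{p-1}\EE\int_0^t|X_r^i-\conx_r|^p\diff r+\sigma^p C_p t^{p/2-1}\EE\int_0^t\|D(X_r^i-\conx_r)\|^p\diff r\Big).
\end{align*}
Here the drift term uses Jensen/H\"older in time, and the martingale term uses the Burkholder–Davis–Gundy inequality followed by H\"older (valid since $p\ge2$). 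Since $\|D(z)\|\le C|z|$ for both choices of $D$, both integrands are bounded by $C(|X^i_r|^p+|\conx_r|^p)$. The same bound holds for $Y^i$ with $\cony_\beta(\dcbo_r^{Y,N},X_r^i)$ in place of $\conx_r$.

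\textbf{Step 2.} Insert the consensus bounds above and take expectations. By exchangeability of the particles, which follows from the i.i.d.\ initial data and the symmetric dynamics, we have $\EE\frac1N\sum_j|X^j_r|^p=\EE|X^1_r|^p\le f(r)$, and similarly for $Y$. Exchangeability is not even needed, since bounding the average by the supremum over $i$ suffices. This yields
\begin{align*}
f(t)\leq 3^{p-1}\big(\EE|X_0^1|^p+\EE|Y_0^1|^p\big)+C_T\int_0^t f(r)\diff r,
\end{align*}
with $C_T$ depending on $p,\lambda,\sigma,T,\alpha,\beta,c$ but not on $N$. Gronwall's inequality then gives $f(T)\le 3^{p-1}e^{C_TT}(\EE|X_0^1|^p+\EE|Y_0^1|^p)$, which is the claim with $\kappa=3^{p-1}e^{C_TT}$.

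\textbf{Main obstacle.} The delicate point is the localisation. It ensures that $f$ is finite, so that Gronwall applies, before the a priori bound is known. The consensus bounds hold pathwise, so stopping at $\tau_M$ causes no trouble. However, one must check that the averages $\frac1N\sum_j|X^j_{r\wedge\tau_M}|^p$ are still controlled by the stopped version of $f$. This is immediate, because $\tau_M$ is a common stopping time for all particles.
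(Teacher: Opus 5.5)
Your proposal is correct and follows essentially the paper's own proof. Both use the Burkholder–Davis–Gundy inequality together with the pathwise consensus bound \ref{eq:upper_bnd_cons} of Lemma~\ref{lem:est_weighted_mean}, control the empirical $p$-th moment by the single-particle moment, and close with Gronwall. Your localisation via $\tau_M$, which the paper skips by tacitly assuming the left-hand side is finite, and your observation that bounding the average by $\sup_i$ removes the need for exchangeability are sound refinements but do not change the argument.
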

    \begin{proof}
    First, there exists a constant $C>0$ independent of $N$ such that we can estimate the drift and diffusion terms by
    	\begin{align}
            \label{eq:bnd_drift_1}
    		\begin{split}
    		    &|X_t^{i}-\conx_{\alpha,\beta}(\dcbo_t^{X,N},\dcbo_t^{Y,N})|^p
    		\vee \|D(X_t^{i}-\conx_{\alpha,\beta}(\dcbo_t^{X,N},\dcbo_t^{Y,N}))\|^p\leq C\left(|X_t^{i}|^p+\int_{\RR^d}|x|^p\diff\dcbo_t^{X,N}(x)\right)^{\frac{1}{p}}
    		\end{split}
    	\end{align}
        and 
        \begin{align}
            \label{eq:bnd_drift_2}
    		\begin{split}
    		    &|Y_t^{i}-\cony_\beta(\rho_t^{Y,N},X_t^i)|^p
    		\vee \|D(Y_t^{i}-\cony_\beta(\rho_t^{Y,N},X_t^i))\|^p\leq C\left(|Y_t^{i}|^p+\int_{\RR^d}|y|^p\diff\dcbo_t^{Y,N}(y)\right)^{\frac{1}{p}},
    		\end{split}
    	\end{align}
        where $\|\cdot\|$ denotes the Frobenius norm, and we have used the consensus estimate \ref{eq:upper_bnd_cons} of Lemma \ref{lem:est_weighted_mean}. Next we apply the Burkholder-Davis-Grundy (BDG) inequality \cite[Chapter 1, Theorem 7.3]{mao_2011_stochastic} and the bounds \eqref{eq:bnd_drift_1} and \eqref{eq:bnd_drift_2} to find for all $t\in [0,T]$ that
    	\begin{align*}
    		&\frac{1}{3^{p-1}}\EE\left[\sup_{s\in [0,t]}\left\vert X_s^{i} \right\vert^p\right]\\
    		&\leq \EE|X_0^{i}|^p+\lambda^p T^{p-1}\int_0^t \EE\left\vert X_s^{i}-\conx_{\alpha,\beta}(\dcbo_s^{X,N},\dcbo_s^{Y,N}) \right\vert^p\diff s\\
    		&\phantom{\leq}+T^{\frac{p}{2}-1}\sigma^p C_{\mathrm{DBG}} \int_0^t \EE\left\vert D(X_s^{i}-\conx_{\alpha,\beta}(\dcbo_s^{X,N},\dcbo_s^{Y,N})) \right\vert^p\diff s\\
    		&\leq \EE|X_0^{i}|^p+C\left(\int_0^t \EE\left[|X_s^{i}|^p+\int_{\RR^d}|x|^p\diff\dcbo_s^{X,N}(x)\right]\diff s\right).
    	\end{align*}
    	As the particles $X^{1},\ldots,X^{N}$ are exchangeable, we see that
    	\begin{align*}
    		\EE\left[\int_{\RR^d}|x|^p\diff\dcbo_t^{X,N}(x)\right]=\EE|X_t^{i}|^p.
    	\end{align*}
    	Hence, for some constant $C>0$, we derive
    	\begin{align*}
    		\EE\left[\sup_{s\in [0,t]}\left\vert X_s^{i} \right\vert^p\right]
    		\leq C\left(\EE|X_0^{i}|^p+\int_0^t \EE\left[\sup_{r\in [0,s]}\left\vert X_r^{i} \right\vert^p\right]\diff s\right).
    	\end{align*}
    	Similarly we also have
        \begin{align*}
    		\EE\left[\sup_{s\in [0,t]}\left\vert Y_s^{i} \right\vert^p\right]
    		\leq C\left(\EE|Y_0^{i}|^p+\int_0^t \EE\left[\sup_{r\in [0,s]}\left\vert Y_r^{i} \right\vert^p\right]\diff s\right).
    	\end{align*}
    	The result now follows from the Grönwall inequality.
    \end{proof}

    In the exact same way one can also obtain the moment bounds for the mean-field dynamics \eqref{eq:mf_cbo}.
     \begin{lemma}\label{lem:moment_estimate_mf_cbo}
         Let $p\geq 2$ suppose that the continuous process $(\bX,\bY)$ satisfy the stochastic equation \eqref{eq:mf_cbo} on $[0,T]$, and the initial data satisfies $\EE|\bX_0|^p+\EE|\bY_0|^p<\infty$. Then there exists a constant $\kappa>0$ that does not depend on $N$ such that
        \begin{equation}
            \label{eq:moment_bnd_mf_cbo}
            \EE\left[\sup_{t\in[0,T]}|\bX_{t }|^p+|\bY_{t}|^p\right]\leq \kappa\,\big(\EE|\bX_0|^p+\EE|\bY_0|^p\big).
        \end{equation}
    \end{lemma}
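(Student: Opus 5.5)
We follow the proof of Lemma \ref{lemma:moment_estimates_for_the_empirical_measures}. The only change is that exchangeability of the particles is replaced by the identity $\dmf_t^X=\law(\bX_t)$, $\dmf_t^Y=\law(\bY_t)$. This identity makes the consensus terms deterministic functions of the marginal moments.

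\textbf{Step 1: bound the consensus points.} By \ref{eq:upper_bnd_cons} of Lemma \ref{lem:est_weighted_mean}, and since $\dmf_t^X=\law(\bX_t)$,
\begin{align*}
|\conx_{\alpha,\beta}(\dmf_t^X,\dmf_t^Y)|^p\leq \Big(\frac{C_1}{C_0}\Big)^p\,\EE|\bX_t|^p .
\end{align*}
The constants $C_0,C_1$ come only from \ref{ass:bnd}, namely $C_0\geq e^{-|\alpha| c}\wedge e^{-|\beta| c}$ and $C_1\leq e^{|\alpha| c}\vee e^{|\beta| c}$, so they are uniform in the measures. Likewise, for every realization of $\bX_t$ we get
\begin{align*}
|\cony_\beta(\dmf_t^Y,\bX_t)|^p\leq (C_1/C_0)^p\,\EE|\bY_t|^p .
\end{align*}
This holds uniformly in the random state $\bX_t$, because the bound on $\cony_\beta(\mu^Y,U)$ does not depend on $U$. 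It follows that the drift and diffusion coefficients satisfy
\begin{align*}
|\bX_t-\conx|^p\vee\|D(\bX_t-\conx)\|^p\leq C\big(|\bX_t|^p+\EE|\bX_t|^p\big),
\end{align*}
and the same bound holds for $\bY$, with a constant $C$ that depends only on $p$, $\alpha$, $\beta$, $c$ and the Lipschitz constant of $D$.

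\textbf{Step 2: estimate the supremum.} We write $\bX_t$ in integral form and use $(a+b+c)^p\leq 3^{p-1}(a^p+b^p+c^p)$. The drift integral is handled with Hölder's inequality in time, and the stochastic integral with the BDG inequality followed by Hölder. This gives, for $t\in[0,T]$,
\begin{align*}
\EE\Big[\sup_{r\in[0,t]}|\bX_r|^p\Big]\leq 3^{p-1}\EE|\bX_0|^p + C\int_0^t\Big(\EE|\bX_r|^p+\EE|\bX_r|^p\Big)\diff r .
\end{align*}
Adding the analogous inequality for $\bY$ and writing $f(t):=\EE[\sup_{r\le t}|\bX_r|^p+|\bY_r|^p]$, we obtain
\begin{align*}
f(t)\leq C\big(\EE|\bX_0|^p+\EE|\bY_0|^p\big)+C\int_0^t f(r)\diff r .
\end{align*}
Grönwall's inequality then gives \eqref{eq:moment_bnd_mf_cbo} with $\kappa=Ce^{CT}$. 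Since the estimate involves only the mean-field process, $\kappa$ is independent of $N$.

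\textbf{Main obstacle: finiteness of $f$.} Grönwall requires $f(t)<\infty$, and this is not known a priori. We handle it by localization. Set $\tau_n:=\inf\{t:|\bX_t|\vee|\bY_t|\geq n\}$ and run the argument above for the stopped processes. The one issue is that the consensus points are functions of the unstopped laws. They enter only through the deterministic quantities $\EE|\bX_r|^p$ and $\EE|\bY_r|^p$, so the argument needs
\begin{align*}
\sup_{r\le T}\big(\EE|\bX_r|^p+\EE|\bY_r|^p\big)<\infty .
\end{align*}
This holds for the solutions considered, for example because $t\mapsto\conx_{\alpha,\beta}(\dmf_t^X,\dmf_t^Y)$ is continuous on $[0,T]$ as part of the solution notion. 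With this bound, the Grönwall argument applies to $f_n(t):=\EE[\sup_{r\le t\wedge\tau_n}(|\bX_r|^p+|\bY_r|^p)]$, which is finite for each $n$. The resulting bound is uniform in $n$, and Fatou's lemma as $n\to\infty$ completes the proof.
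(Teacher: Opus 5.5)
Your proof is correct and follows the paper's route. The paper only says the argument of Lemma~\ref{lemma:moment_estimates_for_the_empirical_measures} carries over: bound the coefficients via \ref{eq:upper_bnd_cons}, apply BDG, replace exchangeability by $\dmf_t^X=\law(\bX_t)$ and $\dmf_t^Y=\law(\bY_t)$, then use Gr\"onwall. Your localization step makes explicit the a~priori finiteness that the paper tacitly assumes. One small point: continuity of $t\mapsto\conx_{\alpha,\beta}(\dmf_t^X,\dmf_t^Y)$ gives that finiteness only for $\bX$. For $\bY$ you should instead invoke the moment bounds that come with the solutions actually used (Theorem~\ref{theorem:existence_and_uniqueness_for_mfcbo}, or \eqref{momentbound} in the appendix), since $|\cony_\beta(\dmf_t^Y,x)|$ is controlled only by $\EE|\bY_t|$ itself.
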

  \begin{remark}
        \label{remark:add_xi_to_cbo}
        If we exchange the SDE \eqref{eq:mf_cbo} with the one below
          \begin{align*}
        \begin{cases}
            \diff\bX_t = -\lambda(\bX_t - \xi\conx_{\alpha,\beta}(\dmf_t^X,\dmf_t^Y))\diff t + \sigma D(\bX_t -\xi \conx_{\alpha,\beta}(\dmf_t^X,\dmf_t^Y))\diff B_t^X, \\
            \diff \bY_t = -\lambda(\bY_t - \xi\cony_{\beta}(\dmf_t^Y, \bX_t))\diff t + \sigma D(\bY_t - \xi\cony_{\beta}(\dmf_t^Y, \bX_t))\diff B_t^Y,
        \end{cases}
    \end{align*}
        for any $\xi\in [0,1]$, then the Lemma \ref{lem:moment_estimate_mf_cbo} still holds. The proof of this is analogue. This fact will be used in the proof of Theorem \ref{theorem:existence_and_uniqueness_for_mfcbo}.
    \end{remark}

    \subsection{Convergence of the weighted mean for i.i.d. samples}
    \label{section:meanfield_limit_of_consensus_for_mfcbo}
    In this section we utilize the lemma \ref{lem:unif_emp_msr_conv} to prove a uniform in variable and law-of-large-number estimate for the consensus functions $\conx_{\alpha,\beta}$ and $\cony_\beta$. 

    \begin{lemma}[Convergence of the weighted mean for i.i.d. samples in $Y$]
    	\label{lemma:convergence_of_the_weighted_mean_for_iid_samples_y}
         Let Assumptions \ref{ass:bnd} and \ref{ass:lip} hold, let $2\leq p<r$, $R>0$. Then for all $\mu^Y\in\rP_r(\RR^{d_2})$ there exists constant $C>0$ depending only on $\mu$, $R$, $p$ and $r$ such that
         \begin{align}
            \label{eq:quant_conv_cony}
             \sup_{x\in\RR^{d_1}}\EE|\cony_\beta(\mu^{Y,N},x)-\cony_\beta(\mu^Y,x)|^p
    	 	\leq C\, N^{-\frac{p}{2}},
         \end{align}
         where we define the empirical measure and random variables by
        \begin{gather*}
    	 	\mu^{Y,N}=\frac{1}{N}\sum_{j=1}^N \delta_{Y^j},\qquad
    	 	\{Y^j\}_{j\in\NN}\overset{\text{i.i.d.}}{\sim} \mu^Y.
    	 \end{gather*}
    \end{lemma}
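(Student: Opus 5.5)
The plan is a law-of-large-numbers argument, made uniform in $x$ by using only the boundedness \ref{ass:bnd}. Fix $x\in\RR^{d_1}$ and write $w_x(y):=\omega_{-\beta}(x,y)$, so $e^{-\beta c}\leq w_x\leq e^{\beta c}$ for all $x,y$. Set
\begin{align*}
a_N(x):=\mu^{Y,N}[y\,w_x],\quad a(x):=\mu^Y[y\,w_x],\quad b_N(x):=\mu^{Y,N}[w_x],\quad b(x):=\mu^Y[w_x].
\end{align*}
The same splitting as in \eqref{eq:stab_est_cons_diff_meas_start} gives
\begin{align*}
|\cony_\beta(\mu^{Y,N},x)-\cony_\beta(\mu^Y,x)|\leq \frac{|a_N-a|}{b_N}+|\cony_\beta(\mu^Y,x)|\,\frac{|b-b_N|}{b_N}.
\end{align*}
By \ref{eq:lower_bnd_weight} (equivalently, by \ref{ass:bnd} directly), $b_N\geq e^{-\beta c}$ deterministically. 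By \ref{eq:upper_bnd_cons}, $|\cony_\beta(\mu^Y,x)|\leq C\,(\int|y|^p\diff\mu^Y)^{1/p}$, with a constant independent of $x$. Hence
\begin{align*}
\EE|\cony_\beta(\mu^{Y,N},x)-\cony_\beta(\mu^Y,x)|^p\leq C\big(\EE|a_N(x)-a(x)|^p+\EE|b_N(x)-b(x)|^p\big),
\end{align*}
with $C$ depending only on $\beta,c,p$ and the $p$-th moment of $\mu^Y$.

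Next I bound these two fluctuations. Note that $a_N(x)-a(x)=\frac1N\sum_j \xi_j$, where $\xi_j:=Y^j w_x(Y^j)-a(x)$ are i.i.d. and centered. A Marcinkiewicz--Zygmund (or Rosenthal/BDG for discrete martingales) inequality with $p\geq 2$ gives
\begin{align*}
\EE\Big|\frac1N\sum_j\xi_j\Big|^p\leq C_p N^{-p/2}\,\EE|\xi_1|^p\leq C_p N^{-p/2}\,2^p e^{p\beta c}\int|y|^p\diff\mu^Y.
\end{align*}
This bound is uniform in $x$, because $w_x$ is bounded independently of $x$. The term $b_N-b$ is handled the same way and is even simpler, since the summands are bounded by $e^{\beta c}$.

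Taking the supremum over $x$ then gives \eqref{eq:quant_conv_cony}. The constant depends on $\mu^Y$ only through its $p$-th moment, which is finite because $r>p$. So the assumption $r>p$ and the parameter $R$ are not actually needed here. They are presumably kept for consistency with the cited Lemma \ref{lem:unif_emp_msr_conv} route, or for later applications where $\mu^Y$ is only in $\rP_{r}$.

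The only delicate step is the uniformity in $x$. Here it is free, because \ref{ass:bnd} gives two-sided bounds on the weights that do not depend on $x$, and \ref{ass:lip} is never used. If one insisted on following the paper's route instead, one would invoke Lemma \ref{lem:unif_emp_msr_conv} with the function class $\{y\mapsto y\,w_x(y)\}_{x}$, and then controlling a sup over $x$ inside the expectation would be the hard part. The statement, however, only puts the sup outside the expectation, so the pointwise argument above suffices.
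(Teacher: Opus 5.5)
Your proof is correct, and it takes a genuinely different, more elementary route than the paper.

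The paper applies Lemma \ref{lem:unif_emp_msr_conv}, i.e.\ the Doukhan--Lang ratio-moment estimate (Theorem \ref{thm:mf_stationary_sequence}) as used by Gerber--Hoffmann--Vaes. It works with the auxiliary exponents $q=\frac{p(r+2)}{r-p}$ and $m=\frac{pq}{q-p}=\frac{p(r+2)}{p+2}\le r$. Uniformity in $x$ comes from checking that $\EE[\omega_{x,1}^q]$, $\EE|\omega_{x,1}Y^1|^m$, $N_x$ and $1/D_x$ are bounded independently of $x$. That machinery is designed for weights that are not bounded away from zero. There the empirical denominator can be small on an event of small probability, and controlling it costs extra integrability of $Y$; this is exactly where $r>p$ enters.

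You instead use the two-sided bound $e^{-\beta c}\le \omega_{-\beta}(x,y)\le e^{\beta c}$ from \ref{ass:bnd} to bound the empirical denominator below deterministically. The ratio then reduces to two centred i.i.d.\ averages, and Marcinkiewicz--Zygmund alone suffices. This is the same inequality the paper uses inside the proof of Lemma \ref{lem:unif_emp_msr_conv} for $\hat N_{iJ}$ and $\hat D_{iJ}$; your argument simply makes the Doukhan--Lang step unnecessary.

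What each route buys:
Your argument is shorter and self-contained. It needs only $\mu^Y\in\rP_p(\RR^{d_2})$, and it gives the rate $N^{-p/2}$ for the $p$-th moment directly. By contrast, the appendix results are stated with $J^{-1/2}$ for $\EE|\cdot|^p$, and must be read as $L^p$-norm bounds to give the claimed rate. The paper's route only needs a lower bound on the mean $D_x$ rather than on each weight, so it is the one that would extend to costs bounded on one side only.

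One minor correction to your closing remark. Lemma \ref{lem:unif_emp_msr_conv} also places the supremum over the index family outside the expectation. So the paper never has to control a supremum inside the expectation either; its uniformity is purely a matter of tracking constants.
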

    \begin{proof}
        We will use Lemma \ref{lem:unif_emp_msr_conv} to prove the convergence of the weighted mean for i.i.d. samples. Set $q:=\frac{p(r+2)}{r-p}$ and $m:=\frac{pq}{q-p}$. We set 
        \begin{align*}
            \omega_{x,j}:=\omega_{-\beta}^\rE(x, V_j),\quad V_j:=Y^j.
        \end{align*} 
        It suffices to prove the uniform limit \eqref{eq:unif_emp_msr_conv_cond} from Lemma \ref{lem:unif_emp_msr_conv}, which in this context is
        \begin{align}
        \label{eq:unif_est_to_verify}
        \sup_{\substack{x\in\RR^{d_1} \\ j\in \NN}}\;\EE[\omega_{x,j}^q]+\EE|\omega_{x,j}V_j|^m+N_x+\frac{1}{D_x}
        <\infty,
    \end{align}
    where $N_x:=\EE|\omega_{x,1}V_1|$ and $D_x:=\EE[\omega_{x,1}]$. Then, we bound the first time of \eqref{eq:unif_est_to_verify} by using \ref{ass:bnd}
    \begin{align*}
        \EE[\omega_{x,j}^q]
        =\int_{\RR^{d_2}} |\omega_{-\beta}^{\rE}(x,y)|^q\diff\mu^Y(y)
        \leq e^{\beta c q},
    \end{align*}
    and we bound the second term of \eqref{eq:unif_est_to_verify} by using \ref{ass:bnd}, the fact that $m\leq r$, and the Jensen inequality by
    \begin{align*}
        \EE|\omega_{x,j}V_j|^m
        \leq e^{\beta c m} \left(\int_{\RR^{d_2}} |y|^r\diff \mu^Y(y)\right)^{\frac{m}{r}}.
    \end{align*}
    We can bound the third term of \eqref{eq:unif_est_to_verify} by the second one via the Jensen inequality, and the last term of \eqref{eq:unif_est_to_verify} follows by \ref{eq:lower_bnd_weight} of Lemma \ref{lem:est_weighted_mean}. Thus, the convergence \eqref{eq:quant_conv_cony} is a direct application of Lemma \ref{lem:unif_emp_msr_conv}.
    \end{proof}
    
    \begin{lemma}[Convergence of the weighted mean for i.i.d. samples]
    	\label{lemma:convergence_of_the_weighted_mean_for_iid_samples}
         Let Assumptions \ref{ass:bnd} and \ref{ass:lip} hold and let $2\leq p<r$, $R>0$, $s+1\leq r$ and $\nu_0\in\rP_r(\RR^{d_1})$. Then for all $\mu^X\in\rP_r(\RR^{d_1})$ and $\mu^Y\in \rP_r(\RR^{d_2})$ there exists constant $C>0$ depending only on $\mu^X$, $R$, $p$ and $r$ such that for all $N\in\NN$ we have
             \begin{align}
                 \label{lem:conv_wmeans_for_iid_samp:outer}
                 \sup_{\nu\in \rP_r(\RR^{d_2})}\EE|\conx_{\alpha,\beta}(\mu^{Y,N},\nu)-\conx_{\alpha,\beta}(\mu^X,\nu)|^p
    	 	&\leq C N^{-\frac{p}{2}},\\
            \label{lem:conv_wmeans_for_iid_samp:inner}
            \EE|\conx_{\alpha,\beta}(\nu_0,\mu^{Y,N})-\conx_{\alpha,\beta}(\nu_0,\mu^Y)|^p
    	 	&\leq C N^{-\frac{p}{2}},
             \end{align}
        where we define the empirical measures and random variables by
        \begin{gather*}
    	 	\mu^{Z,N}=\frac{1}{N}\sum_{j=1}^N \delta_{Z^j},\qquad
    	 	\{Z^j\}_{j\in\NN}\overset{\text{i.i.d.}}{\sim} \mu^Z, \qquad \text{where }Z\in\{X,Y\}.
    	 \end{gather*}
    \end{lemma}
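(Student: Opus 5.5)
The plan is to reduce both estimates to the abstract law-of-large-numbers result Lemma \ref{lem:unif_emp_msr_conv}, in the same way as in the proof of Lemma \ref{lemma:convergence_of_the_weighted_mean_for_iid_samples_y}. For \eqref{lem:conv_wmeans_for_iid_samp:outer} I will read the empirical measure in the first slot as $\mu^{X,N}$, built from i.i.d. $X^j\sim\mu^X$; this matches the stated dependence of $C$ on $\mu^X$ and the comparison with $\conx_{\alpha,\beta}(\mu^X,\nu)$. For \eqref{lem:conv_wmeans_for_iid_samp:inner} the first measure $\nu_0$ is fixed and only $\mu^{Y,N}$ is random. That randomness enters inside $\cony_\beta(\mu^{Y,N},x)$ simultaneously for every $x$, and this is where the uniform estimate of Lemma \ref{lemma:convergence_of_the_weighted_mean_for_iid_samples_y} is needed.

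\emph{Outer estimate.} Fix a deterministic $\nu$ and set $V_j:=X^j$ and $\omega_{\nu,j}:=\omega_\alpha^\rE(X^j,\cony_\beta(\nu,X^j))$. Since $\nu$ is deterministic, the pairs $(\omega_{\nu,j},V_j)$ are i.i.d. By \ref{ass:bnd} we have $e^{-\alpha c}\le\omega_{\nu,j}\le e^{\alpha c}$, and these bounds do not depend on $\nu$. With the same choices $q=\frac{p(r+2)}{r-p}$ and $m=\frac{pq}{q-p}\le r$ as before, we get the uniform bounds
\[
\sup_{\nu}\Big(\EE[\omega_{\nu,j}^q]+\EE|\omega_{\nu,j}V_j|^m+N_\nu+D_\nu^{-1}\Big)\le C\Big(1+\int_{\RR^{d_1}}|x|^r\diff\mu^X(x)\Big),
\]
using Jensen's inequality for the $m$-moment and \ref{eq:lower_bnd_weight} for $D_\nu$. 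Lemma \ref{lem:unif_emp_msr_conv} then gives the rate $N^{-p/2}$, uniformly in $\nu$.

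\emph{Inner estimate.} Write $a_N(x):=\cony_\beta(\mu^{Y,N},x)$ and $b(x):=\cony_\beta(\mu^Y,x)$, and split the difference of the two ratios into a numerator difference and a denominator difference. The denominators are bounded below by $e^{-\alpha c}$, and the numerator $\nu_0[x\,\omega_\alpha]$ is bounded deterministically by $e^{\alpha c}\int|x|\diff\nu_0$. Both differences are then controlled by
\[
\int_{\RR^{d_1}}(1+|x|)\,|\omega_\alpha(x,a_N(x))-\omega_\alpha(x,b(x))|\diff\nu_0(x),
\]
and by \ref{ass:lip} this is at most $C\int(1+|x|)^{s+1}(1+|a_N(x)|+|b(x)|)^s|a_N(x)-b(x)|\diff\nu_0$.

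Since the weights are bounded, $|b(x)|\le e^{2\beta c}\int|y|\diff\mu^Y=:K$ and $|a_N(x)|\le e^{2\beta c}M_N$ with $M_N:=\int|y|\diff\mu^{Y,N}$. Both bounds are uniform in $x$, which removes the $x$-dependence of the bad factor.

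I then split on the event $G:=\{M_N\le 2\EE|Y^1|+1\}$.
\begin{itemize}[label={}]
\item On $G$, the factor $(1+|a_N|+|b|)^s$ is bounded by a deterministic constant. Apply Jensen's inequality with respect to the probability measure proportional to $(1+|x|)^{s+1}\nu_0$; this only needs the $(s+1)$-moment of $\nu_0$, which is where $s+1\le r$ enters. This bounds the $p$-th power by $C\int(1+|x|)^{s+1}|a_N(x)-b(x)|^p\diff\nu_0$. Taking expectations and using Fubini together with the uniform-in-$x$ bound \eqref{eq:quant_conv_cony} of Lemma \ref{lemma:convergence_of_the_weighted_mean_for_iid_samples_y} gives $CN^{-p/2}$.
\item On $G^c$, both $\conx_{\alpha,\beta}(\nu_0,\mu^{Y,N})$ and $\conx_{\alpha,\beta}(\nu_0,\mu^Y)$ are bounded by $e^{2\alpha c}\int|x|\diff\nu_0$, whatever $\mu^{Y,N}$ is. So the $p$-th power of their difference is at most a constant, and it remains to bound $\mathbb{P}(G^c)$. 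By Markov's inequality and the Marcinkiewicz--Zygmund (or Rosenthal) inequality,
\[
\mathbb{P}(G^c)\le\mathbb{P}\big(|M_N-\EE|Y^1||\ge 1\big)\le \EE|M_N-\EE|Y^1||^p\le CN^{-p/2},
\]
where the last step uses $\int|y|^p\diff\mu^Y<\infty$, which follows from $p<r$.
\end{itemize}

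The main obstacle I expect is exactly this inner term. The random empirical consensus $a_N(x)$ appears inside a weight whose Lipschitz constant grows polynomially in $|a_N(x)|$. A direct Hölder split would require Lemma \ref{lemma:convergence_of_the_weighted_mean_for_iid_samples_y} at an exponent above $p$, possibly beyond $r$. The good/bad event decomposition avoids this by exploiting two facts: $\conx_{\alpha,\beta}(\nu_0,\cdot)$ is a priori bounded independently of its second argument, and $a_N$ is controlled uniformly in $x$ by the single scalar $M_N$.
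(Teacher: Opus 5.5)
Your proposal is correct and follows the paper's route. The outer bound is the same application of Lemma~\ref{lem:unif_emp_msr_conv} with $\nu$-independent weight bounds as in Lemma~\ref{lemma:convergence_of_the_weighted_mean_for_iid_samples_y}, and the inner bound is reduced via the ratio split and \ref{ass:lip} to the uniform-in-$x$ estimate \eqref{eq:quant_conv_cony}; reading the first slot of the outer estimate as $\mu^{X,N}$ is the intended meaning. Your good/bad event split on $M_N$, together with the weighted Jensen/Fubini step for the $p$-th power, supplies a justification the paper's proof skips: there the random factor $(1+|\cony_\beta(\mu^{Y,N},x)|+|\cony_\beta(\mu^Y,x)|)^s$ is silently absorbed into a deterministic constant (and the expectation and $p$-th power are dropped), whereas your argument controls it without needing moments beyond $r$.
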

    \begin{proof}
        The proof of \eqref{lem:conv_wmeans_for_iid_samp:outer} is exactly analogue to the proof of Lemma \ref{lemma:convergence_of_the_weighted_mean_for_iid_samples_y}. Thus we only address \eqref{lem:conv_wmeans_for_iid_samp:inner}. We use both \ref{eq:lower_bnd_weight} and \ref{eq:upper_bnd_weighted_mean} of Lemma \ref{lem:est_weighted_mean} to find a constant $C>0$, independent of $N$, such that the following equation holds (as was done in \eqref{eq:stab_est_cons_diff_meas_start} of Lemma \ref{lem:stab_est_y})
        \begin{align*}
            &\EE|\conx_{\alpha,\beta}(\nu_0,\mu^{Y,N})-\conx_{\alpha,\beta}(\nu_0,\mu^Y)|^p\\
            &=\left\vert\frac{\displaystyle\int_{\RR^{d_1}}x\,\omega_\alpha(x,\cony_\beta(\mu^{Y,N},x))\diff \nu_0(x)}{\displaystyle\int_{\RR^{d_1}}\omega_\alpha(x,\cony_\beta(\mu^{Y,N},x))\diff \nu_0(x)}
            -\frac{\displaystyle\int_{\RR^{d_1}}x\,\omega_\alpha(x,\cony_\beta(\mu^Y,x))\diff \nu_0(x)}{\displaystyle\int_{\RR^{d_1}}\omega_\alpha(x,\cony_\beta(\mu^Y,x))\diff \nu_0(x)}\right\vert\\
            &\leq C\, \left\vert\int_{\RR^{d_1}}x\,\omega_\alpha(x,\cony_\beta(\mu^{Y,N},x))-x\,\omega_\alpha(x,\cony_\beta(\mu^Y,x))\diff \nu_0(x)\right\vert\\
            &\phantom{\leq}+C\,\left\vert\int_{\RR^{d_1}}\omega_\alpha(x,\cony_\beta(\mu^{Y,N},x))-\omega_\alpha(x,\cony_\beta(\mu^Y,x))\diff \nu_0(x)\right\vert.
        \end{align*}
        We now use \ref{ass:bnd} and \ref{ass:lip} to find a constant $C>0$ depending on $\alpha$, $c$ and $\nu_0$ such that
        \begin{align*}
            & \left\vert\int_{\RR^{d_1}}x\,\omega_\alpha(x,\cony_\beta(\mu^{Y,N},x))-x\,\omega_\alpha(x,\cony_\beta(\mu^Y,x))\diff \nu_0(x)\right\vert\\
            &\phantom{\leq} +\left\vert\int_{\RR^{d_1}}\omega_\alpha(x,\cony_\beta(\mu^{Y,N},x))-\omega_\alpha(x,\cony_\beta(\mu^Y,x))\diff \nu_0(x)\right\vert\\
            &\leq C\int_{\RR^{d_1}}(1+|x|)^{s+1}\,|\cony_\beta(\mu^{Y,N},x)-\cony_\beta(\mu^Y,x)|\diff \nu_0(x).
        \end{align*}
        Next we employ the convergence \eqref{eq:quant_conv_cony} in Lemma \ref{lemma:convergence_of_the_weighted_mean_for_iid_samples_y}, the Jensen inequality and the fact that $s+1\leq r$ to get
        \begin{align*}
            &\int_{\RR^{d_1}}(1+|x|)^{s+1}\; |\cony_\beta(\mu^{Y,N},x)-\cony_\beta(\mu^Y,x)|\diff \nu_0(x)\leq C \left(\int_{\RR^{d_1}}(1+|x|)^r\diff \nu_0(x)\right)^{\frac{s+1}{r}}
            \,N^{-\frac{p}{2}}
            \leq C\,N^{-\frac{p}{2}},
        \end{align*}
        where $C>0$ is a constant that changes from line to line. With this we conclude the proof
    \end{proof}

\section{Proof of Theorem \ref{theorem:existence_and_uniqueness_for_cbo}}
    \label{section:wellposedness_for_cbo}

   In this section we prove the well-posedness of CBO particle system \eqref{eq:cbo} by using the non-explosion criterion from stochastic Lyapunov theory as stated in \cite[Theorem 3.5]{khasminskii_2012_stochastic}. To that end, we rewrite \eqref{eq:cbo} as
    \begin{align*}
        \diff (\bfX_t,\bfY_t) =F(\bfX_t,\bfY_t)\diff t+G(\bfX_t,\bfY_t)\diff \mathbf{B}_t,
    \end{align*}
    where we use the notation
    \begin{align*}
        (\bfX_t,\bfY_t)&:=(X_t^{1},\ldots, X_t^{N},Y_t^{1},\ldots,Y_t^{N}), \\
        \mathbf{B}_t&:=(B_t^{X,1},\ldots, B_t^{X,N},B_t^{Y,1},\ldots,B_t^{Y,N})
    \end{align*}
    and we define 
    \begin{align*}
        F(\bfX_t,\bfY_t)&:=-\lambda \begin{pmatrix}
            X_t^{1}-\conx_{\alpha,\beta}(\dcbo_t^{X,N}, \dcbo_t^{Y,N}) \\ \vdots \\ X_t^{N}-\conx_{\alpha,\beta}(\dcbo_t^{X,N}, \dcbo_t^{Y,N}) \\
            Y_t^{1}-\cony_\beta(\dcbo_t^{Y,N},X_t^1) \\ \vdots \\
            Y_t^{N}-\cony_\beta(\dcbo_t^{Y,N},X_t^N)
        \end{pmatrix},\\ 
        G(\bfX_t,\bfY_t)&:=\sigma\begin{pmatrix}
            D(X_t^{1}-\conx_{\alpha,\beta}(\dcbo_t^{X,N}, \dcbo_t^{Y,N}))\\
            & \ddots  \\
            && D(Y_t^{N}-\cony_\beta(\dcbo_t^{Y,N},X_t^N))
        \end{pmatrix}.
    \end{align*}
    for the function $F:\RR^{N\cdot d_1+N\cdot d_2}\to \RR^{N\cdot d_1+N\cdot d_2}$ and the matrix valued function $G:\RR^{N\cdot d_1+N\cdot d_2}\to\RR^{(N\cdot d_1+N\cdot d_2)\times (N\cdot d_1+N\cdot d_2)}$. We have sublinear growth in the consensus
    \begin{align}
        \label{eq:sub_lin_cons}
        \begin{split}
        |\conx_{\alpha,\beta}(\dcbo_t^{X,N}, \dcbo_t^{Y,N})|
        &\leq \sum_{i=1}^N \frac{e^{-\alpha \rE(X_t^{i}, \cony_{\beta}(\dcbo_t^{Y,N},X_t^i))}}{\sum_{j=1}^N e^{-\alpha \rE(X_t^{j}, \cony_{\beta}(\dcbo_t^{Y,N},X_t^i))}}\, |X_t^{i}|=\sum_{i=1}^N |X_t^{i}|
        \leq \sqrt{N}|\bfX_t|,\\
    \end{split}
    \end{align}
    and
    \begin{align}
        \label{eq:sub_lin_cons_2}
        \begin{split}
        |\cony_{\beta}(\dcbo_t^{X,N},X_t^n)|
        &\leq \sum_{i=1}^N \frac{e^{\beta \rE(Y_t^{i}, X
        _t^n)}}{\sum_{j=1}^N e^{\beta \rE(Y_t^{j},X_t^n)}}\, |Y_t^{i}| =\sum_{i=1}^N |Y_t^{i}|
        \leq \sqrt{N}|\bfY_t|.
        \end{split}
    \end{align}
Moreover, the above functions are locally Lipschitz.
    \begin{lemma}[Local Lipschitz property]
        \label{cor:loc_lip_prop}
        Let Assumption \ref{ass:lip} hold. We define $\bfX:=(X^{1},\ldots,X^{N})$. Then for all $R>0$, there exists a constant $C>0$ depending on $R$ and $N$ such that, for all $(\bfX,\bfY),(\bfX',\bfY')\in \RR^{N\cdot(d_1+d_2)}$ with $|(\bfX,\bfY)|\wedge |(\bfX',\bfY')|\leq R$,
        \begin{align}
            \label{eq:loc_lip_cons}
               \notag&|\conx_{\alpha,\beta}(\dcbo^{X,N}, \dcbo^{Y,N})-\conx_{\alpha,\beta}(\dcbo^{X',N},\dcbo^{Y',N})|
            +|\cony_\beta(\dcbo^{Y,N},X^i)-\cony_\beta(\dcbo^{Y',N},X^{i'})|\\
            &\leq C |(\bfX-\bfX',\bfY-\bfY')|,
        \end{align}
       where $\dcbo^{Z,N}:= \frac{1}{N}\sum_{i=1}^N \delta_{Z^{i}}$ for Z being $X,Y,X'$ and $Y'$.
    \end{lemma}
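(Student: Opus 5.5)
The proof splits into two cases according to whether the second point also lies in a slightly larger ball. In the far case the claim follows from the sublinear growth bounds \eqref{eq:sub_lin_cons}--\eqref{eq:sub_lin_cons_2}. In the near case we run a standard local Lipschitz argument on a compact set. Throughout, $N$ is fixed and only Assumption \ref{ass:lip} is used, not \ref{ass:bnd}. We read $X^{i'}$ as the $i$-th component of $\bfX'$. By symmetry of the left-hand side of \eqref{eq:loc_lip_cons}, we may assume $|(\bfX,\bfY)|\leq R$, while $(\bfX',\bfY')$ is arbitrary.

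\emph{Far case: $|(\bfX',\bfY')|\geq 2R$.} The consensus points are convex combinations of the particle positions, so the sublinear growth bounds \eqref{eq:sub_lin_cons}--\eqref{eq:sub_lin_cons_2} apply to both configurations. This does not need boundedness of $\rE$. The left-hand side of \eqref{eq:loc_lip_cons} is therefore at most
\begin{align*}
\sqrt{N}\big(|\bfX|+|\bfX'|+|\bfY|+|\bfY'|\big)\leq 2\sqrt{N}\big(R+|(\bfX',\bfY')|\big)\leq 3\sqrt{N}\,|(\bfX',\bfY')|.
\end{align*}
On the other hand, the reverse triangle inequality gives
\begin{align*}
|(\bfX-\bfX',\bfY-\bfY')|\geq |(\bfX',\bfY')|-R\geq \tfrac12|(\bfX',\bfY')|.
\end{align*}
Combining the two, the left-hand side is at most $6\sqrt{N}\,|(\bfX-\bfX',\bfY-\bfY')|$.

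\emph{Near case: both points lie in the ball of radius $2R$ in $\RR^{N(d_1+d_2)}$.} We first treat $\cony_\beta(\dcbo^{Y,N},X^i)$. Write it as a ratio $\sum_j Y^j e^{\beta\rE(X^i,Y^j)}\big/\sum_j e^{\beta\rE(X^i,Y^j)}$.
\begin{itemize}
\item By \ref{ass:lip}, $\rE$ is continuous. On the compact set $\{|z|\leq 2R\}$ it is therefore bounded by some $c_R$ and Lipschitz with constant $C(1+4R)^s$.
\item Hence each weight $e^{\beta\rE(X^i,Y^j)}$ is Lipschitz on the ball and takes values in $[e^{-\beta c_R},e^{\beta c_R}]$.
\item The numerator is then Lipschitz and bounded by $2R\,N e^{\beta c_R}$. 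The denominator is Lipschitz and bounded below by $N e^{-\beta c_R}$.
\item The elementary estimate $|a/b-a'/b'|\leq |a-a'|/b+|a'|\,|b-b'|/(bb')$, as in \eqref{eq:stab_est_num_cons_diff_var_start}, gives a Lipschitz bound for $\cony_\beta$ with a constant depending on $R,N$.
\end{itemize}

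Next we treat $\conx_{\alpha,\beta}$. By \eqref{eq:sub_lin_cons_2}, $|\cony_\beta(\dcbo^{Y,N},X^i)|\leq 2R\sqrt{N}$. So every argument $(X^i,\cony_\beta(\dcbo^{Y,N},X^i))$ of $\rE$ appearing in $\conx_{\alpha,\beta}$ lies in a fixed compact set of radius about $2R(1+\sqrt N)$. On that set $\rE$ is again bounded and Lipschitz. The map $(\bfX,\bfY)\mapsto(X^i,\cony_\beta(\dcbo^{Y,N},X^i))$ is Lipschitz by the previous step. Composing, the weights $e^{-\alpha\rE(\cdot)}$ are Lipschitz and bounded above and below on the ball. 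The same ratio argument then bounds the $\conx_{\alpha,\beta}$ difference by $C(R,N)\,|(\bfX-\bfX',\bfY-\bfY')|$.

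Taking the maximum of the constants from the two cases gives \eqref{eq:loc_lip_cons}. The main subtlety is that the hypothesis only constrains one of the two points, which is exactly what forces the far-case argument via sublinear growth. Otherwise the only care needed is to check that the nested argument $\cony_\beta$ stays in a compact set, so that no global bound on $\rE$ is needed.
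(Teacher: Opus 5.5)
Your proof is correct, and it takes a genuinely different route from the paper.

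\textbf{The paper's route.} The paper does not work with the configurations directly. It notes that $\rE$ is only evaluated on $\overline{B_R}\times\overline{B_R}$, because $\cony_\beta$ is a convex combination of the $Y^j$. It then replaces $\rE$ by an auxiliary $\tilde\rE$ that satisfies both \ref{ass:bnd} and \ref{ass:lip} and agrees with $\rE$ there. Next it applies the measure-level stability estimates of Corollaries \ref{cor:stab_est_cony} and \ref{cor:stab_est_conx}. Finally it bounds $W_p$ between empirical measures by the particle differences.

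This is short given the machinery already built, and it ties the particle-level bound to the same Wasserstein estimates used in the mean-field analysis. Done carefully, using $W_p(\dcbo^{X,N},\dcbo^{X',N})\leq N^{-1/p}|\bfX-\bfX'|$, it even gives a constant depending only on $R$.

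\textbf{What the paper's route leaves out.} It tacitly needs \emph{both} configurations to lie in the ball. Its claim that all four empirical measures are supported in $\overline{B_R}$ fails under the literal hypothesis $|(\bfX,\bfY)|\wedge|(\bfX',\bfY')|\leq R$. Off the ball $\tilde\rE\neq\rE$, so the corollaries cannot be invoked for the original cost, even though they allow one arbitrary measure. This is harmless for the Khasminskii criterion, which only uses the two-sided local Lipschitz condition.

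\textbf{What your route buys.} Your argument is self-contained and uses only \ref{ass:lip}, exactly as the statement assumes. It avoids both the unconstructed extension $\tilde\rE$ and the stability corollaries. Your far case, based on the convex-combination bounds \eqref{eq:sub_lin_cons}--\eqref{eq:sub_lin_cons_2}, proves the one-sided version exactly as stated. The only price is a cruder, $N$-dependent constant (for example through the radius $2R(1+\sqrt N)$), which the lemma permits.
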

    \begin{proof}
         We want to apply Corollaries \ref{cor:stab_est_cony} and \ref{cor:stab_est_conx}, however Assumption \ref{ass:bnd} may not necessarily hold. But note as $\mathrm{supp}(\rho^{Z,N})\subseteq \overline{B_R}$ for $Z$ being $X$, $Y$, $X'$ and $Y'$, we only evaluate the cost function $\rE$ on $\overline{B_R}\times \overline{B_R}$. This means, we could instead use any function $\tilde{\rE}$ that satisfy both Assumptions \ref{ass:bnd} and \ref{ass:lip} with $\tilde{\rE}=\rE$ on $\overline{B_R}\times \overline{B_R}$ without affecting our analysis. Thus, we substitute the function $\rE$ with $\tilde{\rE}$ and apply Corollaries \ref{cor:stab_est_cony} and \ref{cor:stab_est_conx}. So, for any $p\geq 2(s+1)$ fixed, there exists a constant $C>0$ depending on $R$ and $p$ such that
         \begin{align*}
    		&|\conx_{\alpha,\beta}(\dcbo^{X,N}, \dcbo^{Y,N})-\conx_{\alpha,\beta}(\dcbo^{X',N},\dcbo^{Y',N})|
            +|\cony_\beta(\dcbo^{Y,N},X^i)-\cony_\beta(\dcbo^{Y',N},X^{i'})|\\
    		&\leq C  \big(W_p(\dcbo^{NX},\dcbo^{X',N})+W_p(\dcbo^{Y,N},\dcbo^{Y',N})+|X^i-X^{i'}|\big)\\
            &\leq C \left(\sum_{i=1}^N |X^{i}-X^{i'}|^p\right)^{\frac{1}{p}}+C\left(\sum_{i=1}^N |Y^{i}-Y^{i'}|^p\right)^{\frac{1}{p}}+C|X^i-X^{i'}|\\
            &\leq C\sum_{i=1}^N (|X^{i}-X^{i'}|+|Y^{i}-Y^{i'}|)
            \leq C |(\bfX-\bfX',\bfY-\bfY')|.
    	\end{align*}
        Hence, we have proven the local Lipschitz property as required.
    \end{proof}
    \noindent Since $D$ is Lipschitz continuous, the conditions in \cite[Equation 3.32]{khasminskii_2012_stochastic} are satisfied. It remains to show that the coercive function $\vp(x,y):=|(x,y)|^2$ satisfies the condition \cite[Equation 3.43]{khasminskii_2012_stochastic}, this means showing there exists a constant $C>0$ with
    \begin{align*}
        &F(\bfX_t,\bfY_t)\cdot \nabla \vp(\bfX_t,\bfY_t)+\frac{1}{2} \langle G(\bfX_t,\bfY_t),\; \nabla^2\vp(\bfX_t,\bfY_t)G(\bfX_t,\bfY_t)\rangle\leq C \vp(\bfX_t,\bfY_t),
    \end{align*}
    where the above inner product denotes the dot product between matrices. The condition immediately follows from the fact that
    \begin{align*}
        &F(\bfX_t,\bfY_t)\cdot \nabla \vp(\bfX_t,\bfY_t)+\frac{1}{2} \langle G(\bfX_t,\bfY_t),\; \nabla^2\vp(\bfX_t,\bfY_t)G(\bfX_t,\bfY_t)\rangle\\
        &\leq |F(\bfX_t,\bfY_t)|^2+|(\bfX_t,\bfY_t)|^2+ \Vert G(\bfX_t,\bfY_t)\Vert^2
        \leq C\vp(\bfX_t,\bfY_t),
    \end{align*}
    where $\|\cdot\|$ denotes the Frobenius norm, and we used the sublinear growth condition \eqref{eq:sub_lin_cons}. Thus the proof of Theorem \ref{theorem:existence_and_uniqueness_for_cbo} follows from applying \cite[Theorem 3.5]{khasminskii_2012_stochastic}.

\section{Proof of Theorem \ref{theorem:existence_and_uniqueness_for_mfcbo}}
    \label{section:wellposedness_for_mfcbo}

The principal difficulty in proving the well-posedness of \eqref{eq:mf_cbo} lies in the fact that the mapping $t \mapsto \cony_\beta(\overline\rho_t^Y, \overline X_t)$ defines a stochastic process that depends on $\bX_t$. Moreover, according to estimate \ref{est:y_msr} of Corollary \ref{cor:stab_est_cony}, the function $\cony_\beta$ is only locally Lipschitz continuous. As a result, establishing compactness requires more refined tools from functional analysis. To address these challenges, we first introduce a truncated version of the equation~\eqref{eq:mf_cbo}, incorporating a truncation parameter $R > 0$ as defined in~\eqref{eq:mf_cbo1}. For the truncated model, the Leray--Schauder fixed-point theorem can be employed to establish well-posedness. The well-posedness of the original system~\eqref{eq:mf_cbo} is then obtained by letting $R \to \infty$. In the following, we divide the proof into three steps. In step 1, we establish the well-posedness of the truncated CBO model \eqref{eq:mf_cbo1} and show some properties that hold uniformly for all $R>0$. In step 2, we pass to the limit as the truncation parameter $R \to \infty$ and obtain the existence of a solution to~\eqref{eq:mf_cbo}. Finally, in step 3, we prove the uniqueness of the solution. For this proof, we define for any $R>0$ the projection mapping
\begin{align}
    \label{eq:proj}
   P_R(x):=\begin{cases}
        x,\quad &\mbox{ if } |x|\leq R,\\
        R\frac{x}{|x|},\quad &\mbox{ if } |x|>R.
    \end{cases} 
\end{align}    
    \noindent\underline{\textit{Step 1: Well-posedness of truncated model and convergence of consensus as $R\to \infty$.}}
    We introduce a truncated version of the \eqref{eq:mf_cbo} equation in the lemma below.

    \begin{lemma}\label{lem:existence_trunc_sol}
    Let $p>2$ if $s=0$, otherwise let $p\geq 2+s$. Then, for all $T,\,R > 0$, $\dmf_0^X\in\rP_p(\RR^{d_1})$ and $\dmf_0^Y\in\rP_{p+\eta}(\RR^{d_2})$, there exists unique $\bX^R:\Omega\to C^0([0,T], \RR^{d_1})$ and $\bY^R:\Omega\to C^0([0,T], \RR^{d_2})$ satisfying
    \begin{align}
        \label{eq:mf_cbo1}
        \begin{cases}
            \begin{aligned}
                \diff\bX_t^R =& -\lambda(\bX_t^R- \conx_{\alpha,\beta}(\dmf_t^{X,R}, \dmf_t^{Y,R}))\diff t+ \sigma D(\bX_t^R - \conx_{\alpha,\beta}(\dmf_t^{X,R}, \dmf_t^{Y,R}))\diff B_t^X,
            \end{aligned}\\
            \begin{aligned}
                \diff \bY_t^R =& -\lambda(\bY_t^R - \cony_{\beta}(\dmf_t^{Y,R}, P_R(\bX_t^R)))\diff t+ \sigma D(\bY_t^R - \cony_{\beta}(\dmf_t^{Y,R}, P_R(\bX_t^R)))\diff B_t^Y,
            \end{aligned}
        \end{cases}
        \tag{$\text{MF CBO}_R$}
    \end{align}
    in the strong sense with initial condition $\law(\bX_0^R,\bY_0^R)= \dmf_0^X\otimes \dmf_0^Y$. Here $P_R$ is the projection mapping defined in \eqref{eq:proj}. Furthermore, the solutions $\big\{(\bX^R,\bY^R)\big\}_{R>0}$ satisfy the following properties
    \begin{enumerate}[label=(P\arabic*)]
       \item \label{prop:uniform_p_mom_R} We have the uniform bounded $p$-moment
       \begin{align*}
           \sup_{R>0}\;\EE\left[\sup_{t\in [0,T]} |\bX_t^R|^p+|\bY_t^R|^p\right]<\infty.
       \end{align*}
       \item \label{prop:equic_con_x} The set $\{\conx_{\alpha,\beta}(\bX^R,\bY^R)_{R>0}$ is uniformly bounded and equicontinuous in $C^0([0,T],\, \RR^{d_1})$.
       \item \label{prop:equic_con_y} For any fixed $M>0$, the set $\{\cony_{\beta}(\dmf^{Y,R}, \cdot)\}_{R>0}$ is uniformly bounded and equicontinuous in $C^0([0,T]\times B_M,\, \RR^{d_2})$.
   \end{enumerate}
   \end{lemma}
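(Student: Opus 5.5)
We will follow the fixed-point strategy of Carrillo et al.\ and \cite[Theorems 2.2--2.3]{gerber_hoffmann_vaes_2025_meanfield}, with the unknown being the pair of consensus curves rather than a single one. The \(Y\)-consensus is a function of time and the \(X\)-state, so the natural space is
\[
\mathcal{C}:=C^0([0,T],\RR^{d_1})\times C^0\big([0,T]\times B_R^{d_1},\RR^{d_2}\big),
\]
where \(B_R^{d_1}\) is the closed ball in \(\RR^{d_1}\). The truncation \(P_R\) ensures that only values of the \(Y\)-consensus on this compact ball are ever used.

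\textbf{Definition of the map \(\mathcal{T}\).} Given \((u,w)\in\mathcal{C}\), solve the linear-type SDE system
\begin{align*}
\diff \bX_t&=-\lambda(\bX_t-u_t)\diff t+\sigma D(\bX_t-u_t)\diff B_t^X,\\
\diff \bY_t&=-\lambda\big(\bY_t-w_t(P_R(\bX_t))\big)\diff t+\sigma D\big(\bY_t-w_t(P_R(\bX_t))\big)\diff B_t^Y.
\end{align*}
The \(X\)-equation has globally Lipschitz coefficients. The \(Y\)-equation has coefficients that are globally Lipschitz in \(\bY\) and continuous and bounded in \((t,\bX)\), since \(w\) is continuous on a compact set. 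Hence a unique strong solution exists, with \(p\)-moment bounds from BDG and Gr\"onwall. We then set
\[
\mathcal{T}(u,w):=\Big(\conx_{\alpha,\beta}(\law\bX_t,\law\bY_t),\ \cony_\beta(\law\bY_t,\cdot)|_{B_R^{d_1}}\Big).
\]

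\textbf{Compactness of \(\mathcal{T}\).} The moment bounds together with Lemma~\ref{lem:est_weighted_mean}\ref{eq:upper_bnd_cons} give uniform boundedness of the image. For time regularity, It\^o's isometry gives \(W_p(\law\bY_t,\law\bY_{t'})\le C|t-t'|^{1/2}\), and similarly for \(\bX\). Combining this with Lemma~\ref{lemma:basic_stability_estimate} for the \(X\)-consensus, and with Lemma~\ref{lem:stab_est_y} and Corollary~\ref{cor:stab_est_cony} for \(\cony_\beta\) (applied with \(|U|\le R\)), yields H\"older continuity in \(t\) and Lipschitz continuity in \(x\) on \(B_R^{d_1}\). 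Arzel\`a--Ascoli then gives compactness.

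\textbf{Continuity of \(\mathcal{T}\).} A standard coupling argument, comparing two solutions driven by the same Brownian motion, bounds \(\EE\sup_t\big(|\bX-\bX'|^p+|\bY-\bY'|^p\big)\) by \(\|u-u'\|_\infty\) and \(\|w-w'\|_\infty\). The term \(w_t(P_R\bX_t)-w'_t(P_R\bX'_t)\) is controlled by \(\|w-w'\|_\infty\) plus a modulus of continuity of \(w\) applied to \(|\bX-\bX'|\). Stability of the consensus then transfers this to the image.

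\textbf{Leray--Schauder a priori bound.} For fixed points of \(\xi\mathcal{T}\), \(\xi\in[0,1]\), Remark~\ref{remark:add_xi_to_cbo} gives moment bounds independent of \(\xi\). Lemma~\ref{lem:est_weighted_mean} then bounds the fixed points, so Leray--Schauder yields existence.

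\textbf{Uniqueness.} Take two solutions, couple them through the same noise, and apply Gr\"onwall. The term \(|\cony_\beta(\rho_t,P_R\bX_t)-\cony_\beta(\rho'_t,P_R\bX'_t)|\) is bounded by \(C\big(W_p(\rho_t,\rho'_t)+|\bX_t-\bX'_t|\big)\) using Corollary~\ref{cor:stab_est_cony}\ref{est:y_var_msr}, which applies because \(|P_R\bX_t|\le R\). Together with Lemma~\ref{lemma:basic_stability_estimate} and \(W_p\le(\EE|\cdot|^p)^{1/p}\), Gr\"onwall gives uniqueness. The constant here may depend on \(R\), which is harmless at this stage.

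\textbf{Properties \ref{prop:uniform_p_mom_R}--\ref{prop:equic_con_y}.} Property \ref{prop:uniform_p_mom_R} is Lemma~\ref{lem:moment_estimate_mf_cbo}, whose proof carries over verbatim with \(P_R\) inserted, because the consensus bound of Lemma~\ref{lem:est_weighted_mean}\ref{eq:upper_bnd_cons} is uniform in \(U\); hence \(\kappa\) does not depend on \(R\). Property \ref{prop:equic_con_x} follows from \ref{prop:uniform_p_mom_R}, the \(R\)-uniform bound \(W_p(\dmf^{R}_t,\dmf^{R}_{t'})\le C|t-t'|^{1/2}\), and Lemma~\ref{lemma:basic_stability_estimate} with \(R\)-independent moment radius. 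Property \ref{prop:equic_con_y} on \(B_M\) uses Lemma~\ref{lem:stab_est_y}: \ref{est:y_var_bas} for continuity in \(x\), with constant depending on \(M\), and \ref{est:y_msr_bas} for continuity in \(t\).

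\textbf{Main obstacle.} The hardest step is the continuity and compactness of \(\mathcal{T}\) in the function-valued \(Y\)-consensus component. The issue is that \(w\) is only continuous, not Lipschitz, so the coupling estimate cannot be closed by Gr\"onwall alone. The first approach to try is to restrict \(\mathcal{T}\) to the closed convex subset of pairs \((u,w)\) with \(w\) Lipschitz in \(x\) with a fixed constant \(L_R\), which the image lies in by Lemma~\ref{lem:stab_est_y}\ref{est:y_var_bas}, and then use Schauder's theorem on this set. If that fails, the fallback is to bound the modulus-of-continuity term in expectation by dominated convergence.
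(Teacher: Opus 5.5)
Your proposal is correct and follows essentially the same route as the paper. Both use Leray--Schauder on $C^0([0,T],\RR^{d_1})\times C^0([0,T]\times B_R,\RR^{d_2})$ with the same solution operator, compactness via H\"older-in-time and Lipschitz-in-$x$ bounds plus Arzel\`a--Ascoli, a priori bounds from Remark~\ref{remark:add_xi_to_cbo}, uniqueness via \ref{est:y_var_msr} (using $|P_R(\bX_t)|\le R$) and Gr\"onwall, and \ref{prop:uniform_p_mom_R}--\ref{prop:equic_con_y} from the $R$-uniform moment bounds.

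On the obstacle you flag, your dominated-convergence fallback is the right fix: it combines uniform continuity of $w$ on the compact set $[0,T]\times B_R$ with $\EE\sup_t|\bX_t-\bX_t'|^p\to0$. It also fills a step the paper's continuity argument passes over, since the paper's bound of $\EE\sup|Y-Y'|^p$ by $\|u^Y-u^{Y'}\|_\infty^p$ alone ignores that $X\neq X'$. By contrast, restricting to a class with a fixed Lipschitz constant would not be invariant under $\mathcal{T}$. The constant in \ref{est:y_var_bas} depends on the moments of $\law\bY_t$, and these grow with $\|w\|_\infty$.
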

    The proof of this lemma follows a classical proof strategy. Thus, we reserve the proof of this lemma for the appendix \ref{section:proof_trunc_sde}. We let $\{(\bX^R,\bY^R)\}_{R>0}$ be the solutions to the truncated SDE \eqref{eq:mf_cbo1} that we obtain from Lemma \ref{lem:existence_trunc_sol}. Then, we show that that there exists some functions $\mathcal{X}\in C^0([0,T];\,\RR^{d_1})$ and $\mathcal{Y}\in C^0([0,T]\times\RR^{d_1};\,\RR^{d_2})$ such that we have the uniform convergence $\conx_{\alpha,\beta}(\dmf^{X,R},\dmf^{Y,R})\to \mathcal{X}$ in $C^0([0,T];\,\RR^{d_1})$ and the \textit{locally} uniform convergence $\cony_\beta(\dmf^{Y,R},\cdot)\to \mathcal{Y}$ in $C^0([0,T]\times \RR^{d_1};\,\RR^{d_2})$  as $R\to \infty$.  Similar arguments have been used in \cite{fornasier2025pde} for multiple-minimizer CBO model.
    
    Let us focus our attention on the subsequences $\{\conx_{\alpha,\beta}(\dmf^{X,n},\dmf^{Y,n})\}_n$ and $\{\cony_\beta(\dmf^{Y,n},\cdot)\}_n$. By \ref{prop:equic_con_x} of Lemma \ref{lem:existence_trunc_sol} we know that $\{\conx_{\alpha,\beta}(\dmf^{X,n},\dmf^{Y,n})\}_n$ uniformly bounded and equicontinuous in $C^0([0,T];\,\RR^{d_1})$. Applying the Arzel\`{a}-Ascoli  theorem, there exits a subsequence  $\conx_{\alpha,\beta}(\dmf^{X,n},\dmf^{Y,n})$ (without relabeling) and $\mathcal{X}\in C^0([0,T];\,\RR^{d_1})$  such that 
    \begin{equation}\label{convergenceX}
        \conx_{\alpha,\beta}(\dmf^{X,n},\dmf^{Y,n})\to \mathcal{X} \quad \mbox{ uniformly in }C^0([0,T];\,\RR^{d_1})\,.
    \end{equation} 
    On the other hand, according to \ref{prop:equic_con_y} of Lemma \ref{lem:existence_trunc_sol}, we have $\{\cony_\beta(\dmf^{Y,n},\cdot)\}$ is uniformly bounded and equicontinuous in $C^0([0,T]\times B_M;\,\RR^{d_2})$ for any $M>0$. Combining the Arzel\`{a}-Ascoli theorem and diagonal selection argument, we know that there exits a subsequence $\{\cony_\beta(\dmf^{Y,n},\cdot)\}_n$ and some $\mathcal{Y}\in C^0([0,T]\times\RR^{d_1};\,\RR^{d_2})$ such that 
    \begin{equation}\label{convergenceY}
        \cony_\beta(\dmf^{Y,n},\cdot)\to \mathcal{Y}\quad \mbox{ uniformly in } C^0([0,T]\times B_M;\,\RR^{d_2}) \mbox{ for any }M>0\,.
    \end{equation}
    Moreover, $\mathcal{Y}_t$ inherits the local Lipschitz property of  $\cony_\beta(\dmf_t^{Y,n},\cdot)$. This means
    \begin{equation}\label{LipY}
       |\mathcal{Y}_t(x_1)-\mathcal{Y}_t(x_2)|\leq C(M)|x_1-x_2|\quad \mbox{ for any }x_1\in B_M\text{ and }x_2\in \RR^{d_1}\,.
    \end{equation}

\noindent\underline{\textit{Step 2: Existence of solution to \eqref{eq:mf_cbo}.}} We define the following SDE using the limit functions $\mathcal{X}_t$ and $\mathcal{Y}_t$:
    \begin{align}
        \label{eq:limit_mf_cbo}
        \begin{cases}
            \diff\bX_t^\infty = -\lambda(\bX_t^\infty- \mathcal{X}_t)\diff t + \sigma D(\bX_t^\infty - \mathcal{X}_t)\diff B_t^X,\\
            \diff \bY_t^\infty = -\lambda(\bY_t^\infty - \mathcal{Y}_t(\bX_t^\infty))\diff t + \sigma D(\bY_t^\infty - \mathcal{Y}_t(\bX_t^\infty))\diff B_t^Y,
        \end{cases}
    \end{align}
    with the initial condition $\bX^\infty_0=\bX_0$ and $\bY^\infty_0=\bY_0$. Using the same arguments as in Step 1, by standard SDE theory there exists a strong solution $(\bX^\infty,\bY^\infty)$ with
     \begin{align}\label{moment1}
         \EE\left[\sup_{t\in[0,T]}|\bX_t^\infty|^p+|\bY_t^\infty|^p\right]\leq C.
     \end{align}
     The goal of the next steps is to show that
     \begin{align}
         \label{eq:goal_to_show_eq_func}
         \mathcal{X}=\conx_{\alpha,\beta}(\dmf^X,\dmf^Y)\quad\text{and}\quad\mathcal{Y}(\cdot)=\cony_\beta(\dmf^Y,\cdot),
     \end{align}
     where $\dmf_t^X:=\law(\bX_t^\infty)$ and $\dmf_t^Y:=\law(\bY_t^\infty)$. Consequently, we prove that $(\bX^\infty,\bY^\infty)$ is a solution to \eqref{eq:mf_cbo}. For this we will define for each $R>0$ a stopping time
     \begin{align*}
        \tau_R:=\inf\{t\in [0,T]\;\vert\; |X_t^\infty|\geq R\},
    \end{align*}
    which is adapted to the natural filtration.

    \noindent\underline{\textit{Step 2.1: Proving that
    $\mathcal{Y}=\cony_\beta(\dmf^Y,\cdot)$.}} Define $Z^{Y,n}:=\bY^\infty-\bY^n$, then the process satisfies
        \begin{align*}
            Z_{t\wedge\tau_R}^{Y,n}&=Z_0^{Y,n}-\lambda \int_0^{t\wedge\tau_R} Z_s^{Y,n}\diff s
            +\lambda \int_0^{t\wedge\tau_R} \mathcal{Y}_s(\bX_s^\infty)-\cony_{\beta}(\dmf_s^{X,n},P_n(\bX_s^n))\diff s\\
            &\phantom{=}+\sigma \int_0^{t\wedge\tau_R} D(\bY_s^\infty - \mathcal{Y}_s(\bX_s^\infty))-D(\bY_s^n - \cony_{\beta}(\dmf_s^{X,n},P_n(\bX_s^n))\diff B_s^{Y}
        \end{align*}
        With Jensen's inequality, the identity $\bY_0^\infty=\bY_0^n=\bY_0$, and BDG inequality, one can obtain
    \begin{align*}
        &\EE\left[\sup_{s\in [0,t\wedge \tau_R]}|Z_s^{Y,n}|^2 \right]\leq C\,\EE\left[\int_0^{t\wedge\tau_R} |Z_s^{Y,n}|^2\diff s\right]+ C\,\EE\left[\int_0^{t\wedge\tau_R} |\mathcal{Y}_s(\bX_s^\infty)-\cony_{\beta}(\dmf_s^{X,n},P_n(\bX_s^n))|^2\diff s\right]\,.
    \end{align*}
    Now, if we define $k:=s\wedge\tau_R$ and let $n\geq R$, then it follows from \eqref{LipY} that in the event $k>0$
  \begin{align*}
      &|\mathcal{Y}_k(\bX_k^\infty)-\cony_{\beta}(\dmf_k^{X,n},P_n(\bX_k^n))|= |\mathcal{Y}_k(\bX_k^\infty)-\cony_{\beta}(\dmf_k^{X,n},\bX_k^n)| \\
      &\leq |\mathcal{Y}_k(\bX_k^\infty)-\mathcal{Y}_k(\bX_k^n)|+|\mathcal{Y}_k(\bX_k^n)-\cony_{\beta}(\dmf_k^{X,n},\bX_k^n)|\\
      &\leq  C(R)|Z_k^{Y,n}|+\sup_{x\in B_R}|\mathcal{Y}_k(x)-\cony_{\beta}(\dmf_k^{X,n},x)|\leq C(R)|Z_k^{Y,n}|+\varepsilon_n
  \end{align*}
  holds for some $\varepsilon_n>0$ satisfying $\varepsilon_n\to 0$ as $n\to\infty$, where in the last inequality we have used \eqref{convergenceY}. Thus we have
  \begin{align*}
      &\EE\left[\sup_{s\in [0,t\wedge \tau_R]}|Z_s^{Y,n}|^2 \right]\leq C(R)\,\EE\left[\int_0^{t\wedge \tau_R} |Z_s^{Y,n}|^2\diff s+\varepsilon_n\right]\leq C(R)\left(\int_0^{t} \EE\left[\sup_{r\in [0,s\wedge \tau_R]}|Z_r^{Y,n}|^2\right] \diff s+\varepsilon_n\right),
  \end{align*}
which by the Gronwall inequality leads to
\begin{equation}
     \EE\left[\sup_{s\in [0,t\wedge \tau_R]}|Z_s^{Y,n}|^2 \right]\leq C(R)\,\varepsilon_n.
\end{equation}
Next, we have the estimate
\begin{align*}
    W_2^2(\bar \rho_t^Y,\bar\rho_t^{Y,n})&\leq \EE\left[\left.|Z_t^{Y,n}|^2\;\right\vert\;t\leq \tau_R\right]\mathbb{P}(t\leq \tau_R)+\EE\left[\left.|Z_t^{Y,n}|^2\;\right\vert\;t>\tau_R\right]\mathbb{P}(t>\tau_R)\\
    &\leq \EE\left[\sup_{s\in [0,t\wedge\tau_R]}|Z_s^{Y,n}|^2\right]    +\EE\left[\left.|Z_t^{Y,n}|^2\;\right\vert\;t>\tau_R\right]\mathbb{P}(t>\tau_R).
\end{align*}
Then, let us use the fact that $p>2$, the Chebyshev inequality, and Hölder inequality to show
\begin{align*}
    &\EE\left[\left.|Z_t^{Y,n}|^2\;\right\vert\;t>\tau_R\right]\mathbb{P}(t>\tau_R)
    \leq \frac{\EE\left[|Z_t^{Y,n}|^2 \;\sup\limits_{\tau \in [0,t]}|\bX_\tau^\infty|^{p-2}\right]}{R^{p-2}}\\
  &\leq  C\frac{\EE\left[ \sup\limits_{\tau \in [0,t]}|\bY_\tau^\infty|^p+|\bY_\tau^n|^{p} \right]^{\frac{1}{p}}\EE\left[ \sup\limits_{\tau \in [0,t]}|\bX_\tau^\infty|^p\right]^{\frac{p-2}{p}}}{R^{p-2}}\leq\frac{C}{R^{p-2}}\,,
\end{align*}
where we have used the uniform moment estimates from \ref{prop:uniform_p_mom_R} and \eqref{moment1}, and $C>0$ is independent of $n$ and $R$. This implies that
\begin{equation}\label{WY}
    W_2^2(\bar \rho_t^Y,\bar\rho_t^{Y,n})\leq C(R)\varepsilon_n+\frac{C}{R^{p-2}}\to 0
\end{equation}
by simultaneously letting $n\to \infty$ and then $R\to\infty$.  Finally, for any $x\in\RR^{d_1}$, there exists some $r>0$ such that $x\in B_r$, then according to \eqref{convergenceY} and \eqref{WY} we have
\begin{align*}
   & |\mathcal{Y}_t(x)- \cony_\beta(\dmf_t^Y,x)|\leq |\mathcal{Y}_t(x)-\cony_\beta(\dmf_t^{Y,n},x)|+|\cony_\beta(\dmf_t^{Y,n},x)-\cony_\beta(\dmf_t^Y,x)|\\
   &\leq \sup_{z\in B_r}|\mathcal{Y}_t(z)-\cony_\beta(\dmf_t^{Y,n},z)|+C(r)W_2(\bar \rho_t^Y,\bar\rho_t^{Y,n}) \to 0 \;\mbox{ as }n\to\infty,
\end{align*}
where in the last inequality we have used \ref{est:y_msr_bas} of Lemma \ref{lem:stab_est_y}. Hence we have obtained $\mathcal{Y}(\cdot)=\cony_\beta(\dmf^Y,\cdot)$.

\noindent\underline{\textit{Step 2.2: Proving that
    $\mathcal{X}=\conx_{\alpha,\beta}(\dmf^X,\dmf^Y)$.}} Define $Z^{X,n}:=\bX^\infty-\bX^n$, then similarly as in Step 2.1 we obtain
    \begin{align*}
        &\EE\left[\sup_{s\in [0,t\wedge\tau_R]}|Z_t^{X,n}|^2 \right]\leq C\,\EE\left[\int_0^{t\wedge\tau_R}|Z_s^{X,n}|^2\diff s\right]
            + C\,\EE\left[\int_0^{t\wedge\tau_r} |\mathcal{X}_s-\conx_{\alpha,\beta}(\dmf_s^{X,n},\dmf_s^{Y,n})|^2\diff s\right]\,.
    \end{align*}
    Using \eqref{convergenceX}, there exists $\varepsilon_n>0$ such that $\varepsilon_n\to 0$ as $n\to 0$ with
    \begin{align*}
        &\EE\left[\sup_{s\in [0,t\wedge\tau_R]}|Z_t^{X,n}|^2 \right]
        \leq C\,\int_0^t\EE\left[\sup_{r\in [0,s\wedge\tau_R]}|Z_r^{X,n}|^2 \right]\diff s+C\,\varepsilon_n.
    \end{align*}
    Then, it follows from Gronwall's inequality, the definition of Wasserstein distance that
\begin{equation}
    \label{eq:WX}
   \lim_{n\to\infty} W_2^2(\bar \rho_t^X,\bar\rho_t^{X,n})\leq \lim_{n\to\infty} \EE|Z_t^{X,n}|^2 =0.
\end{equation}
Now, by using Corollary \ref{cor:stab_est_conx}, and \eqref{convergenceX}, \eqref{WY} and \eqref{eq:WX} we deduce the equality
\begin{align*}
  &|\mathcal{X}_t-\conx_{\alpha,\beta}(\dmf_t^X,\dmf_t^Y)|\leq |\mathcal{X}_t-\conx_{\alpha,\beta}(\dmf_t^{X_n} ,\dmf_s^{Y,n})|+|\conx_{\alpha,\beta}(\dmf_t^{X,n} ,\dmf_s^{Y,n})-\conx_{\alpha,\beta}(\dmf_t^X,\dmf_t^Y)|\\
  &\leq |\mathcal{X}_t-\conx_{\alpha,\beta}(\dmf_t^{X,n} ,\dmf_t^{Y,n})|+CW_2(\bar \rho_t^X,\bar\rho_t^{X,n})+CW_2(\bar \rho_t^Y,\bar\rho_t^{Y,n})\to 0\; \mbox{ as }n\to\infty.
\end{align*}
Thus we have shown \eqref{eq:goal_to_show_eq_func} and obtained the existence of a solution to \eqref{eq:mf_cbo}.

\noindent\underline{\textit{Step 3: Uniqueness of solution \eqref{eq:mf_cbo}.}} We prove the uniqueness in this last step. Let us assume $(\bX,\bY)$ and $(\bX',\bY')$ are two solutions to \eqref{eq:mf_cbo} with the same initial data. We define the stopping time 
\begin{align*}
    \tau_R:=\inf\{t\in [0,T]\;\vert\; \min\{|X_t|,|X_t'|\}\geq R\},
\end{align*}
which is adapted to the natural filtration. Define $Z_t^{X}:=\bX_t-\bX_T'$ and $Z_t^{Y}:=\bY_t-\bY_t'$. Then using the same methods as in Step 2.1 and 2.2, we deduce
 \begin{align}\label{ZXes}
        &\EE\left[\sup_{s\in [0,t\wedge \tau_R]}Z_s^{X}\right]^2\leq C\,\EE\left[\int_0^{t\wedge \tau_R} \EE|Z_s^{X}|^2\diff s\right]
            + C\,\EE\left[\int_0^{t\wedge \tau_R} |\conx_{\alpha,\beta}(\dmf_s^{X},\dmf_s^{Y})-\conx_{\alpha,\beta}(\dmf_s^{X'},\dmf_s^{Y'})|^2 \diff s\right]\notag\\
            &\leq C\,\int_0^{t} \EE\left[\sup_{r\in [0,s\wedge \tau_R]}Z_r^{X}\right]^2+\EE\left[\sup_{r\in [0,s\wedge \tau_R]}Z_r^{Y}\right]^2\diff s,
    \end{align}
where in the last inequality we have used the Corollary \ref{cor:stab_est_conx}. Indeed, we estimate
\begin{align*}
   |\conx_{\alpha,\beta}(\dmf_s^{X},\dmf_s^{Y})-\conx_{\alpha,\beta}(\dmf_s^{X'},\dmf_s^{Y'})|^2&\leq  C(W_2^2(\dmf_s^{X},\dmf_s^{X'})+W_2^2(\dmf_s^{Y},\dmf_s^{Y'}))\\&\leq C (\EE|Z_s^X|^2+\EE|Z_s^Y|^2).
\end{align*}
Correspondingly, we can also obtain  
\begin{align}
        \label{eq:zy_estimate}
        \begin{split}
            \EE\left[\sup_{s\in [0,t\wedge \tau_R]}|Z_s^{Y}|^2\right]
        &\leq C\;\EE\left[\int_0^{t\wedge \tau_R} \EE|Z_s^{Y}|^2\diff s\right]\\
        &\phantom{\leq}+ C\,\EE\left[\int_0^{t\wedge \tau_R} \EE|\cony_{\beta}(\dmf_s^{Y},\bX_s)-\cony_{\beta}(\dmf_s^{Y'},\bX_s')|^2\diff s\right]\,.
        \end{split}
    \end{align}
    According to Corollary \ref{cor:stab_est_cony}, if we define $k:=s\wedge \tau_R$, then it holds that
      \begin{align}
        \label{eq:cony_bnd_ins_exc}
          \begin{split}
              &\EE|\cony_{\beta}(\dmf_k^{Y},\bX_k)-\cony_{\beta}(\dmf_k^{Y'},\bX_k'))|^2\leq C(R)\,\big(W_2^2(\dmf_k^{Y},\dmf_k^{Y'})+\EE|\bX_k-\bX_k'|^2\big)\\
              &\leq C(R)\,(\EE|Z_k^X|^2+\EE|Z_k^Y|^2).
          \end{split}
      \end{align}
    Using this fact gives us
  \begin{equation*}
       \EE\left[\sup_{s\in [0,t\wedge \tau_R]}|Z_s^{Y}|^2\right]\leq C(R)\int_0^{t} \EE\left[\sup_{r\in [0,s\wedge \tau_R]}Z_r^{X}\right]^2+\EE\left[\sup_{r\in [0,s\wedge \tau_R]}Z_r^{Y}\right]^2\diff s
  \end{equation*}
  We take the sum of both equations and find with the Gronwall inequality and the fact that $Z_0^X=0$ and $Z_0^Y=0$
  \begin{align*}
      \EE\left[\sup_{t\in [0,T\wedge \tau_R]}|Z_t^X|^2+|Z_t^{Y}|^2\right]=0.
  \end{align*}
  Finally, we need to remove the localization. We have
  \begin{align*}
      \mathbb{P}(\tau_R<T)
        &\leq \mathbb{P}\left(\sup_{t\in [0,T]}|X_t|>R\right)+
      \mathbb{P}\left(\sup_{t\in [0,T]}|X_t'|>R\right)\\&\leq \frac{1}{R^p}\left(\EE\left[\sup_{t\in [0,T]}|X_t|^p\right]+\EE\left[\sup_{t\in [0,T]}|X_t|^p\right]\right).
  \end{align*}
Then by Lemma \ref{lem:moment_estimate_mf_cbo} we have that $\mathbb{P}(\tau_R<T)\to 0$ as $R\to \infty$. Therefore, the two solutions $(X,Y)$ and $(X',Y')$ agree almost everywhere on the interval $[0,T]$. This shows that the solution to \eqref{eq:mf_cbo} is unique. With this we conclude the proof.

    \section{Proof of Theorem \ref{theorem:meanfield_of_cbo}}

    \label{section:meanfield_limit_for_cbo}
    While our strategy is inspired by the mean-field convergence frameworks of
    \cite[Theorem 2.6]{gerber_hoffmann_vaes_2025_meanfield} or \cite[Theorem 3.1]{chaintron_diez_2022_propagation}, , our proof requires substantial new ingredients to handle the coupled consensus points $\conx_{\alpha,\beta}$ and $\cony_\beta$.
    \begin{proof}[Proof of Theorem \ref{theorem:meanfield_of_cbo}]

    Let us explain the proof strategy: The goal of the proof is to devise a Gronwall type estimate between the expected particle difference $|\bX_t^i-X_t^i|^p$ and $|\bY_t^i-Y_t^i|^p$. For this we make use of the SDE's \eqref{eq:cbo} and \eqref{eq:mf_cbo} and bound the difference in consensus between $\dcbo^{m,N}$ and $\overline{\mu}^{m,N}$, and $\overline{\mu}^{m,N}$ and $\dmf^m$, where $\overline{\mu}^{m,N}$ the empirical distribution of \eqref{eq:mf_cbo} and $m\in \{X,Y\}$. We divide the proof into three steps. Using the exact same justification as in \cite[Theorem 2.6]{gerber_hoffmann_vaes_2025_meanfield}, it suffices to prove the claim for fixed $p\in [s+2,\, \frac{q}{2}]$.

        \smallskip
    
        \noindent\underline{\it Coupling Method for the parameter $X$.} We define the particle distribution of \eqref{eq:mf_cbo} as
        \begin{align*}
            \dpmf^{X,N}:=\frac{1}{N}\sum_{i=1}^N \delta_{\bX^i},
            \qquad \dpmf^{N,Y}:=\frac{1}{N}\sum_{i=1}^N \delta_{\bY^i}.
        \end{align*}
        Then by the BDG inequality we derive the estimate
        \begin{align*}
            \EE\left[\sup_{t\in [0,T]}\left\vert X_t^i-\bX_t^i \right\vert^p\right]
            &\leq (2T)^{p-1}\lambda^p\int_0^T \EE\vert X_t^i-\bX_t^i+\conx_{\alpha,\beta}(\dcbo_t^{X,N}, \dcbo_t^{Y,N})-\conx_{\alpha,\beta}(\dmf_t^X, \dmf_t^Y) \vert^p\diff t\\
            &\phantom{\leq}+2^{p-1} T^{\frac{p}{2}-1}\sigma^p C_{\mathrm{DBG}} \int_0^T \EE\left\vert Q_t \right\vert^p\diff t.
        \end{align*}
        By using the inequality $(a+b+c)^p\leq 3^{p-1} (a^p+b^p+c^p)$, we split the RHS into terms that we can control
        \begin{align}
            \EE\left[\sup_{t\in [0,T]}\left\vert X_t^i-\bX_t^i \right\vert^p\right]\tag{P Diff X}\label{eq:mean-field limit:particle diff}
            &\leq (6T)^{p-1}\lambda^p\int_0^T \EE\vert X_t^i-\bX_t^i \vert^p\diff t\\
            &\phantom{\leq} \tag{Emp Diff X}\label{eq:mean-field limit:empirical_difference}
            + (6T)^{p-1}\lambda^p\int_0^T \EE\left\vert A_t \right\vert^p\diff t\\
            &\phantom{\leq} \tag{Emp App X}\label{eq:mean-field limit:empirical_approximation}
            + (6T)^{p-1}\lambda^p\int_0^T \EE\left\vert B_t \right\vert^p\diff t\\
        &\phantom{\leq} \tag{L X}\label{eq:mean-field limit:Lipschitz transform}
        +2^{p-1} T^{\frac{p}{2}-1}\sigma^p C_{\mathrm{DBG}} \int_0^T \EE\big\vert Q_t\big\vert^p\diff t,
        \end{align}
        where we define the differences
        \begin{align*}
            A_t&:=\conx_{\alpha,\beta}(\dcbo_t^{X,N}, \dcbo_t^{Y,N})-\conx_{\alpha,\beta}(\dpmf_t^{X,N}, \dpmf_t^{Y,N}),\\
            B_t&:=\conx_{\alpha,\beta}(\dpmf_t^{X,N}, \dpmf_t^{Y,N})-\conx_{\alpha,\beta}(\dmf_t^X, \dmf_t^Y),\\
            Q_t&:=D(X_t^i-\conx_{\alpha,\beta}(\dcbo_t^{X,N}, \dcbo_t^{Y,N})-D(\bX_t^i-\conx_{\alpha,\beta}(\dmf_t^X, \dmf_t^Y)).
        \end{align*}
        The next step is to bound each term in the above inequality. First, we start with the empirical difference \eqref{eq:mean-field limit:empirical_difference}. For this we define the $\RR$-valued random variables and value
        \begin{align*}
        	\overline{Z}^i
        	:=\sup_{t\in[0,T]} |(\bX_t^i,\bY_t^i)|^p,\qquad
        	R_0:=\EE[
          \overline{Z}^1].
        \end{align*}
        Let $R>R_0$ be fixed, then we need to define the particle excursion set
        \begin{align*}
        	\Omega_t:=\left\{\omega\in\Omega\;\left\vert\;\frac{1}{N}\sum_{i=1}^N |(\bX_t^i(\omega),\bY_t^i(\omega))|^p\geq R\right.\right\}.
        \end{align*}
        Additionally, by Theorem \ref{theorem:existence_and_uniqueness_for_mfcbo} we know that $\EE[|\overline{Z}^{m,i}|^{q/p}]<\infty$. As $q\geq 2p$, we have by \cite[Lemma 2.5]{gerber_hoffmann_vaes_2025_meanfield} that there exists a constant $C>0$ independent of $N$ such that 
        \begin{align}
            \label{eq:prob_of_excursion}
            \mathbb{P}(\Omega_t)\leq C \, N^{-\frac{q}{2p}}.
        \end{align}
        Now we can split the term in \eqref{eq:mean-field limit:empirical_difference} by
        \begin{align*}
        	\EE\left\vert A_t \right\vert^p
        	&=\EE\left[\left\vert A_t \right\vert^p \bbone_{\Omega\setminus\Omega_t}\right]+\EE\left[\left\vert A_t \right\vert^p\bbone_{\Omega_t}\right].
        \end{align*}
        By Corollary \ref{cor:stab_est_conx}, we obtain
        \begin{align*}
        	\EE\left[\left\vert A_t \right\vert^p \bbone_{\Omega\setminus\Omega_t}\right]
        	\leq C\, \EE\left[W_p^p(\dcbo_t^{X,N},\dpmf_t^{X,N})+W_p^p(\dcbo_t^{Y,N},\dpmf_t^{Y,N})\right].
        \end{align*}
        Additionally, the Wasserstein difference is controlled by the particle difference by
        \begin{align*}
        	\EE\left[W_p^p(\dcbo_t^{X,N},\dpmf_t^{X,N})\right]
        	\leq \EE\left[\frac{1}{N}\sum_{j=1}^N |X_t^j-\bX_t^j|^p\right]
        	=\EE|X_t^i-\bX_t^i|^p,   \end{align*}
        and we argue similarly for $W_p^p(\dcbo_t^{Y,N},\dpmf_t^{Y,N})$. Next, we use \ref{eq:upper_bnd_cons} from Lemma \ref{lem:est_weighted_mean}, Lemma \ref{lemma:moment_estimates_for_the_empirical_measures} and Lemma \ref{lem:moment_estimate_mf_cbo} to derive the estimate
        \begin{align*}
        	\EE|A_t|^q\leq 2^{q-1}\big(\EE|\conx_{\alpha,\beta}(\dcbo_t^{X,N}, \dcbo_t^{Y,N})|^q+\EE|\conx_{\alpha,\beta}(\dpmf_t^{X,N}, \dpmf_t^{Y,N})|^q\big)
            \leq C,
        \end{align*}
        where $C>0$ is a constant independent of $N$. Then, by the Hölder inequality and \eqref{eq:prob_of_excursion}, we find
        \begin{align}
            \label{eq:bound_exercusion_particles}
            \begin{split}
                \EE[|A_t|^p \bbone_{\Omega_t}]\leq \big(\EE|A|^q\big)^{\frac{p}{q}}
        	\EE[\bbone_{\Omega_t}^{\frac{q}{q-p}}]^{\frac{q-p}{q}}
        	\leq C\,N^{-\frac{q-p}{2p}}.
            \end{split}
        \end{align}
        We have thus shown
        \begin{align*}
        	\EE|A_t|^p
        	\leq C\, N^{-\frac{q-p}{2p}}+C\,\Big(\EE|X_t^i-\bX_t^i|^p+\EE|Y_t^i-\bY_t^i|^p\Big).
        \end{align*}
        Next, we bound the empirical approximation \eqref{eq:mean-field limit:empirical_approximation}. As a first step we derive the bound
        \begin{align*}
            \EE| B_t |^p&\leq 2^{p-1}\EE\left[\left\vert \conx_{\alpha,\beta}(\dpmf_t^{X,N},\dpmf_t^{Y,N})-\conx_{\alpha,\beta}(\dmf_t^X ,\dpmf_t^{Y,N}) \right\vert^p\mathds{1}_{\Omega_t}\right]\\
            &\phantom{\leq}+2^{p-1}\EE\left[\left\vert \conx_{\alpha,\beta}(\dpmf_t^{X,N},\dpmf_t^{Y,N})-\conx_{\alpha,\beta}(\dmf_t^X ,\dpmf_t^{Y,N}) \right\vert^p\mathds{1}_{\Omega\setminus\Omega_t}\right]\\
            &\phantom{\leq}+2^{p-1}\EE\left[\left\vert \conx_{\alpha,\beta}(\dmf_t^X ,\dpmf_t^{Y,N})-\conx_{\alpha,\beta}(\dmf_t^X ,\dmf_t^Y ) \right\vert^p\right].
        \end{align*}
        Thus, by Lemma \ref{lemma:convergence_of_the_weighted_mean_for_iid_samples} for the last two terms on the RHS there exists some constant $C>0$ independent of $N$ such that
        \begin{align*}
        	&\EE\left[\left\vert \conx_{\alpha,\beta}(\dpmf_t^{X,N},\dpmf_t^{Y,N})-\conx_{\alpha,\beta}(\dmf_t^X ,\dpmf_t^{Y,N}) \right\vert^p\mathds{1}_{\Omega\setminus\Omega_t}\right]\\
            &+\EE\left\vert \conx_{\alpha,\beta}(\dmf_t^X ,\dpmf_t^{Y,N})-\conx_{\alpha,\beta}(\dmf_t^X ,\dmf_t^Y ) \right\vert^p\leq CN^{-\frac{p}{2}}.
        \end{align*}
        Additionally, for the first term on the RHS we can use the same bound as in  \eqref{eq:bound_exercusion_particles} to show
        \begin{align*}
            \EE\left[\left\vert \conx_{\alpha,\beta}(\dpmf_t^{X,N},\dpmf_t^{Y,N})-\conx_{\alpha,\beta}(\dmf_t^X ,\dpmf_t^{Y,N}) \right\vert^p\mathds{1}_{\Omega_t}\right]
            \leq C\, N^{-\frac{q-p}{2p}},
        \end{align*}
        for some constant $C>0$ independent of $N$. Finally, using the fact that $D$ is globally Lipschitz, we can bound the term \eqref{eq:mean-field limit:Lipschitz transform} analogously as we have done above. This means we derive the inequality 
        \begin{align}
            \label{eq:gronwal_est_x}
        	\begin{split}
        	    &\EE\left[\sup_{t\in [0,T]}\left\vert X_t^i-\bX_t^i \right\vert^p\right]\leq CN^{-\theta p}+C\int_0^T \EE\left[\sup_{s\in [0,t]}\left\vert X_s^i-\bX_s^i \right\vert^p+\left\vert Y_s^i-\bY_s^i \right\vert^p\right]\diff t
        	\end{split}
        \end{align}
        and we define the parameter
        \begin{align*}
            \theta:=\min\left\{\frac{1}{2},\; \frac{q-p}{2p^2}\right\}
            =\min\left\{\frac{1}{2},\; \frac{q-p}{2p^2},\; \frac{q-(s+2)}{2 (s+2)^2}\right\},
        \end{align*}
        where we used $(0,q)\ni z\mapsto \frac{q-z}{z^2}\in \RR$ is decreasing. 

        \smallskip
        
        \underline{\it Coupling Method for the parameter $Y$.} We apply the same arguments as in the previous step. We simply highlight key differences. It suffices to derive bounds for the following two terms
        \begin{align}
            \tag{Emp Diff Y}\label{eq:mean-field limit:empirical_difference_y}
            &\EE \big\vert \cony_\beta(\dcbo_t^{Y,N}, X_t^i)-\cony_\beta(\dpmf_t^{Y,N},\bX_t^i) \big\vert^p,\\
            \tag{Emp App Y}\label{eq:mean-field limit:empirical_approximation_y}
            &\EE\big\vert \cony_\beta(\dpmf_t^{Y,N},\bX_t^i)-\cony_\beta(\dmf_t^Y,\bX_t^i) \big\vert^p.
        \end{align}
        For \eqref{eq:mean-field limit:empirical_difference_y}, we use the same excursion set trick to rewrite the expectation as
        \begin{align*}
        	&\EE\big\vert \cony_\beta(\dcbo_t^{Y,N}, X_t^i)-\cony_\beta(\dpmf_t^{Y,N},\bX_t^i) \big\vert^p\\
        	&=\EE\left[\left\vert \cony_\beta(\dcbo_t^{Y,N}, X_t^i)-\cony_\beta(\dpmf_t^{Y,N},\bX_t^i) \right\vert^p \bbone_{\Omega\setminus\Omega_t}\right]\\
        	&\phantom{=}+\EE\left[\left\vert \cony_\beta(\dcbo_t^{Y,N}, X_t^i)-\cony_\beta(\dpmf_t^{Y,N},\bX_t^i) \right\vert^p\bbone_{\Omega_t}\right].
        \end{align*}
        By the inequality \ref{est:y_var_msr} of Corollary \ref{cor:stab_est_cony}, we obtain
        \begin{align*}
        	\EE\left[\left\vert \cony_\beta(\dcbo_t^{Y,N}, X_t^i)-\cony_\beta(\dpmf_t^{Y,N},\bX_t^i) \right\vert^p \bbone_{\Omega\setminus\Omega_t}\right]
        	\leq C\,\left( \EE\left[W_p^p(\dcbo_t^{Y,N},\dpmf_t^{Y,N})\right]+\EE|X_t^i-\bX_t^i|^p\right).
        \end{align*}
        At this point, we can follow the same steps as previously, to derive the inequality
        \begin{align*}
        	&\EE|\cony_\beta(\dcbo_t^{X,N}, \dcbo_t^{Y,N})-\cony_\beta(\dpmf_t^{X,N}, \dpmf_t^{Y,N})|^p\leq CN^{-\frac{q-p}{2p}}+C\EE|X_t^i-\bX_t^i|^p+\EE|Y_t^i-\bY_t^i|^p.
        \end{align*}
        For \eqref{eq:mean-field limit:empirical_approximation_y} we use Lemma \ref{lemma:convergence_of_the_weighted_mean_for_iid_samples_y}  to show there exists a constant $C>0$ independent of $N$ such that
        \begin{align*}
            \EE\big\vert \cony_\beta(\dpmf_t^{Y,N},\bX_t^i)-\cony_\beta(\dmf_t^Y,\bX_t^i) \big\vert^p
            \leq CN^{-\frac{p}{2}},
        \end{align*}
        If we combine everything we have proven the inequality
        \begin{align}
            \label{eq:gronwal_est_y}
            \begin{split}
                &\EE\left[\sup_{t\in [0,T]}\left\vert Y_t^i-\bY_t^i \right\vert^p\right]\leq CN^{-\theta p}+C\int_0^T \EE\left[\sup_{s\in [0,t]}\left\vert X_s^i-\bX_s^i \right\vert^p+\left\vert Y_s^i-\bY_s^i \right\vert^p\right]\diff t
            \end{split}
        \end{align}

        \smallskip
        
        \underline{\it Comining both inequalities.}
        We take the sum of \eqref{eq:gronwal_est_x} and \eqref{eq:gronwal_est_y} and find
        \begin{align*}
        	&\EE\left[\sup_{t\in [0,T]}\left\vert X_t^i-\bX_t^i \right\vert^p+\left\vert Y_t^i-\bX_t^i \right\vert^p\right]\leq CN^{-\theta p}+C\int_0^T \EE\left[\sup_{s\in [0,t]}\left\vert X_s^i-\bY_s^i \right\vert^p+\left\vert Y_s^i-\bY_s^i \right\vert^p\right]\diff t.
        \end{align*}
        The result is now a consequence of the Grönwall inequality and taking the $p$th root. With this we conclude the proof.
    \end{proof}
    
\appendix

\bibliography{references.bib} 

\begin{thebibliography}{10}

\bibitem{bayraktar_ekren_zhou_2025_uniformintime}
Erhan Bayraktar, Ibrahim Ekren, and Hongyi Zhou.
\newblock Uniform-in-time weak propagation of chaos for consensus-based
  optimization.
\newblock {\em arXiv preprint arXiv:2502.00582v1}, 2025.

\bibitem{beddrich_chenchene_fornasier_huang_wohlmuth_2024_constrained}
Jonas Beddrich, Enis Chenchene, Massimo Fornasier, Hui Huang, and Barbara
  Wohlmuth.
\newblock Constrained consensus-based optimization and numerical heuristics for
  the few particle regime.
\newblock {\em arXiv preprint arXiv:2410.10361v1}, 2024.

\bibitem{oksendal_2003_stochastic}
{Bernt {\O}ksendal}.
\newblock {\em Stochastic differential equations: An introduction with
  applications}.
\newblock Universitext. Springer, 2003.

\bibitem{borghi_herty_pareschi_2023_adaptive}
Giacomo Borghi, Michael Herty, and Lorenzo Pareschi.
\newblock An adaptive consensus based method for multi-objective optimization
  with uniform {Pareto} front approximation.
\newblock {\em Applied Mathematics \& Optimization}, 88(2):58, 2023.

\bibitem{borghi_huang_qiu_2024_particle}
Giacomo Borghi, Hui Huang, and Jinniao Qiu.
\newblock A particle consensus approach to solving nonconvex-nonconcave min-max
  problems.
\newblock {\em SIAM Journal on Control and Optimization}, 2026, to appear.

\bibitem{bungert_hoffmann_kim_roith_2025_mirrorcbo}
Leon Bungert, Franca Hoffmann, Dohyeon Kim, and Tim Roith.
\newblock {MirrorCBO}: A consensus-based optimization method in the spirit of
  mirror descent.
\newblock {\em Mathematical Models and Methods in Applied Sciences},
  35(14):3083--3170, 2025.

\bibitem{byeon_ha_hwang_ko_yoon_2025_consensus}
Junhyeok Byeon, Seung~Yeal Ha, Gyuyoung Hwang, Dongnam Ko, and Jaeyoung Yoon.
\newblock Consensus, error estimates and applications of first-and second-order
  consensus-based optimization algorithms.
\newblock {\em Mathematical Models and Methods in Applied Sciences},
  35(2):345--401, 2025.

\bibitem{carrillo_choi_totzeck_tse_2018_analytical}
Jos{\'e}~A. Carrillo, Young-Pil Choi, Claudia Totzeck, and Oliver Tse.
\newblock An analytical framework for consensus-based global optimization
  method.
\newblock {\em Mathematical Models and Methods in Applied Sciences},
  28(06):1037--1066, 2018.

\bibitem{chaintron_diez_2022_propagation}
Louis-Pierre Chaintron and Antoine Diez.
\newblock Propagation of chaos: A review of models, methods and applications.
  {II}. applications.
\newblock {\em Kinetic \& Related Models}, 15(6), 2022.

\bibitem{chenchene_huang_qiu_2025_consensusbased}
Enis Chenchene, Hui Huang, and Jinniao Qiu.
\newblock A consensus-based algorithm for non-convex multiplayer games.
\newblock {\em Journal of Optimization Theory and Applications}, 206(2):45,
  2025.

\bibitem{choi2025modified}
Young-Pil Choi, Seungchan Lee, and Sihyun Song.
\newblock A modified consensus-based optimization model: consensus formation
  and uniform-in-time propagation of chaos.
\newblock {\em arXiv preprint arXiv:2511.19116}, 2025.

\bibitem{chow_teicher_1997_probability}
Yuan~Shih Chow and Henry Teicher.
\newblock {\em Probability theory}.
\newblock Springer Texts in Statistics. Springer, New York, 1997.
\newblock ISSN: 1431-875X.

\bibitem{doukhan_lang_2009_evaluation}
Paul Doukhan and Gabriel Lang.
\newblock Evaluation for moments of a ratio with application to regression
  estimation.
\newblock {\em Bernoulli}, 15(4):1259--1286, 2009.

\bibitem{fornasier_huang_pareschi_sunnen_2020_consensusbased}
Massimo Fornasier, Hui Huang, Lorenzo Pareschi, and Philippe S{\"u}nnen.
\newblock Consensus-based optimization on hypersurfaces: Well-posedness and
  mean-field limit.
\newblock {\em Mathematical Models and Methods in Applied Sciences},
  30(14):2725--2751, 2020.

\bibitem{fornasier_klock_riedl_2024_consensusbased}
Massimo Fornasier, Timo Klock, and Konstantin Riedl.
\newblock Consensus-based optimization methods converge globally.
\newblock {\em SIAM Journal on Optimization}, 34(3):2973--3004, 2024.

\bibitem{fornasier2025pde}
Massimo Fornasier and Lukang Sun.
\newblock A pde framework of consensus-based optimization for objectives with
  multiple global minimizers.
\newblock {\em Communications in Partial Differential Equations},
  50(4):493--541, 2025.

\bibitem{trillos_kumarakash_li_riedl_zhu_2025_defending}
Nicol{\'a}s Garc{\'\i}a~Trillos, Aditya Kumar~Akash, Sixu Li, Konstantin Riedl,
  and Yuhua Zhu.
\newblock Defending against diverse attacks in federated learning through
  consensus-based bi-level optimization.
\newblock {\em Philosophical Transactions A}, 383(2298):20240235, 2025.

\bibitem{gerber_hoffmann_kim_vaes_2025_uniformintime}
Nicolai Gerber, Franca Hoffmann, Dohyeon Kim, and Urbain Vaes.
\newblock Uniform-in-time propagation of chaos for consensus-based
  optimization.
\newblock {\em arXiv preprint arXiv:2505.08669v1}, 2025.

\bibitem{gerber_hoffmann_vaes_2025_meanfield}
Nicolai~Jurek Gerber, Franca Hoffmann, and Urbain Vaes.
\newblock Mean-field limits for consensus-based optimization and sampling.
\newblock {\em ESAIM: Control, Optimisation and Calculus of Variations},
  31(74):1--44, 2025.

\bibitem{gilbarg_trudinger_2001_elliptic}
David Gilbarg and Neil~S. Trudinger.
\newblock {\em Elliptic partial differential equations of second order}.
\newblock Classics in mathematics. Springer, Berlin, reprint of 1998 ed.
  edition, 2001.

\bibitem{goodfellow_pougetabadie_mirza_xu_wardefarley_ozair_courville_bengio_2014_generative}
Ian~J Goodfellow, Jean Pouget-Abadie, Mehdi Mirza, Bing Xu, David Warde-Farley,
  Sherjil Ozair, Aaron Courville, and Yoshua Bengio.
\newblock Generative adversarial nets.
\newblock {\em Advances in Neural Information Processing Systems}, 27, 2014.

\bibitem{ha_hwang_kim_2024_timediscrete}
Seung-Yeal Ha, Gyuyoung Hwang, and Sungyoon Kim.
\newblock Time-discrete momentum consensus-based optimization algorithm and its
  application to {Lyapunov} function approximation.
\newblock {\em Mathematical Models and Methods in Applied Sciences},
  34(06):1153--1204, 2024.

\bibitem{ha_kang_kim_kim_yang_2022_stochastic}
Seung-Yeal Ha, Myeongju Kang, Dohyun Kim, Jeongho Kim, and Insoon Yang.
\newblock Stochastic consensus dynamics for nonconvex optimization on the
  {Stiefel} manifold: Mean-field limit and convergence.
\newblock {\em Mathematical Models and Methods in Applied Sciences},
  32(03):533--617, 2022.

\bibitem{herty_huang_kalise_kouhkouh_2025_multiscale}
Michael Herty, Yuyang Huang, Dante Kalise, and Hicham Kouhkouh.
\newblock A multiscale consensus-based algorithm for multilevel optimization.
\newblock {\em Mathematical Models and Methods in Applied Sciences}, pages
  1--37, 2025.

\bibitem{huang_kouhkouh_2025_uniformintime}
Hui Huang and Hicham Kouhkouh.
\newblock Uniform-in-time mean-field limit estimate for the consensus-based
  optimization.
\newblock {\em ESAIM: Control, Optimisation and Calculus of Variations},
  31(69):1--17, 2025.

\bibitem{huang_qiu_2022_meanfield}
Hui Huang and Jinniao Qiu.
\newblock On the mean-field limit for the consensus-based optimization.
\newblock {\em Mathematical Methods in the Applied Sciences},
  45(12):7814--7831, 2022.

\bibitem{huang_qiu_riedl_2023_global}
Hui Huang, Jinniao Qiu, and Konstantin Riedl.
\newblock On the global convergence of particle swarm optimization methods.
\newblock {\em Applied Mathematics \& Optimization}, 88(2):30, 2023.

\bibitem{huang_qiu_riedl_2024_consensusbased}
Hui Huang, Jinniao Qiu, and Konstantin Riedl.
\newblock Consensus-based optimization for saddle point problems.
\newblock {\em SIAM Journal on Control and Optimization}, 62(2):1093--1121,
  2024.

\bibitem{huang_warnett_2025_wellposedness}
Hui Huang and Jethro Warnett.
\newblock Well-posedness and mean-field limit estimate of a consensus-based
  algorithm for multiplayer games.
\newblock {\em Communications on Pure and Applied Analysis}, 2025 to appear.

\bibitem{khasminskii_2012_stochastic}
Rafail Khasminskii.
\newblock {\em Stochastic stability of differential equations}, volume~66 of
  {\em Stochastic Modelling and Applied Probability}.
\newblock Springer, Berlin, Heidelberg, 2012.

\bibitem{ko_ha_jin_kim_2022_convergence}
Dongnam Ko, Seung-Yeal Ha, Shi Jin, and Doheon Kim.
\newblock Convergence analysis of the discrete consensus-based optimization
  algorithm with random batch interactions and heterogeneous noises.
\newblock {\em Mathematical Models and Methods in Applied Sciences},
  32(06):1071--1107, 2022.

\bibitem{mao_2011_stochastic}
Xuerong Mao.
\newblock {\em Stochastic differential equations and applications}.
\newblock Woodhead Publishing, second edition, reprinted edition, 2011.

\bibitem{nash_1950_equilibrium}
John~F. Nash.
\newblock Equilibrium points in \textit{n}-person games.
\newblock {\em Proceedings of the National Academy of Sciences}, 36(1):48--49,
  January 1950.

\bibitem{pinnau_totzeck_tse_martin_2017_consensusbased}
Ren{\'e} Pinnau, Claudia Totzeck, Oliver Tse, and Stephan Martin.
\newblock A consensus-based model for global optimization and its mean-field
  limit.
\newblock {\em Mathematical Models and Methods in Applied Sciences},
  27(01):183--204, 2017.

\bibitem{totzeck_2021_trends}
Claudia Totzeck.
\newblock Trends in consensus-based optimization.
\newblock In {\em Active Particles, Volume 3: Advances in Theory, Models, and
  Applications}, pages 201--226. Springer, 2021.

\bibitem{wang_li_huang_2025_mathematical}
Jinhuan Wang, Keyu Li, and Hui Huang.
\newblock Mathematical analysis of the {PDE} model for the consensus-based
  optimization.
\newblock {\em arXiv preprint arXiv:2504.10990v1}, 2025.

\bibitem{zhang_poupart_yu_2022_optimality}
Guojun Zhang, Pascal Poupart, and Yaoliang Yu.
\newblock Optimality and stability in non-convex smooth games.
\newblock {\em Journal of Machine Learning Research}, 23(35):1--71, 2022.

\end{thebibliography}
\bibliographystyle{plain}

\section{Auxillary Results}

\begin{theorem}[{\cite[Theorem 1]{doukhan_lang_2009_evaluation}}]
    \label{thm:mf_stationary_sequence}
    Let $\{w_j, V_j\}_{j \in \mathbb{N}}$ be a stationary sequence with values in $\mathbb{R} \times \mathbb{R}^d$ with $w_j > 0$ almost surely, and set
    \begin{align*}
        \hat{N}_J = \frac{1}{J} \sum_{j=1}^{J} w_j V_j, \qquad
        \hat{D}_J = \frac{1}{J} \sum_{j=1}^{J} w_j, \qquad
        \hat{R}_J = \frac{\hat{N}_J}{\hat{D}_J}.
    \end{align*}
    Let also $N = \EE[\hat{N}_1]$, $D = \EE[\hat{D}_1]$, and $R = \frac{N}{D}$. Let $0 < p < q$ and assume that for some $\theta,C > 0$,
    \begin{align}
        \label{eq:uniform_bnds_for_stationary_sequence}
            &r:=\frac{p(q+2)}{q-p},\quad
            m:=\frac{pq}{q-p},\quad
            \EE[w_1^q]\leq \theta,\quad
            \EE|V_1|^r\leq \theta, \quad
            \EE|w_1 V_1|^m\leq \theta,\notag\\
            &\Big(\EE|\hat{D}_J-D|^q\Big)^{\frac{1}{q}}
            \leq C J^{-\frac{1}{2}},\qquad
            \Big(\EE|\hat{N}_J-N|^p\Big)^{\frac{1}{p}}
            \leq C J^{-\frac{1}{2}}.
    \end{align}
    Then the following inequality is satisfied 
    \begin{align}
        \label{eq:mf_stationary_sequence}
        \EE|\hat{R}_J - R|^p \leq  \frac{C}{D}\left(1+2\frac{N}{D}+\frac{\theta}{D}+\theta\left(\frac{C}{D}\right)^{2/r}\right)\; J^{-\frac{1}{2}}.
    \end{align}
\end{theorem}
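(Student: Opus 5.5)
The plan is the standard ratio-moment argument underlying \cite[Theorem 1]{doukhan_lang_2009_evaluation}. Write $\hat R_J-R=\frac{\hat N_J-N}{\hat D_J}-\frac{N(\hat D_J-D)}{D\,\hat D_J}$, where $N$ is the vector-valued mean of $w_1V_1$. The difficulty is that $\hat D_J$ may be close to $0$, so $1/\hat D_J$ is not uniformly bounded. I would therefore split $\Omega$ into the good event $G:=\{\hat D_J\geq D/2\}$ and its complement $G^c$.

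\textbf{Good event.} On $G$ we have $1/\hat D_J\le 2/D$, and hence
\begin{align*}
|\hat R_J-R|\le \frac{2}{D}|\hat N_J-N|+\frac{2|N|}{D^2}|\hat D_J-D|.
\end{align*}
Taking $p$-th moments gives a bound $\lesssim \frac{C}{D}\big(1+\frac{|N|}{D}\big)J^{-1/2}$ in the $L^p$ norm. For the first term I use the assumption on $\hat N_J$ directly. For the second I use the assumption on $\hat D_J$, noting that $L^q$ controls $L^p$ because $q>p$. This step is routine.

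\textbf{Bad event.} On $G^c$ I would not use $\hat D_J$ in the denominator at all. Instead I use the convexity bound $|\hat R_J|\le \max_{j\le J}|V_j|$, valid because the weights are positive. This gives
\begin{align*}
\EE\big[|\hat R_J-R|^p\bbone_{G^c}\big]\le 2^{p-1}\Big(\EE\big[\max_j|V_j|^p\bbone_{G^c}\big]+|R|^p\,\mathbb P(G^c)\Big).
\end{align*}
By Markov's inequality and the $L^q$ bound on $\hat D_J-D$, $\mathbb P(G^c)\le (2C/D)^qJ^{-q/2}$.

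The $\max_j|V_j|$ term is the main obstacle. A crude bound $\EE\max_j|V_j|^r\le J\theta$ loses a factor of $J$. The plan is to apply H\"older with exponents $r/p$ and $r/(r-p)$:
\begin{align*}
\EE\big[\max_j|V_j|^p\bbone_{G^c}\big]\le (J\theta)^{p/r}\,\mathbb P(G^c)^{(r-p)/r}\lesssim \theta^{p/r}(C/D)^{q(r-p)/r}J^{p/r-q(r-p)/(2r)}.
\end{align*}
The choice $r=\frac{p(q+2)}{q-p}$ is exactly what makes this exponent equal to $-p/2$. Indeed, $p/r-q(r-p)/(2r)=-p/2$ is equivalent to $2p-q(r-p)=-pr$, that is, $r(q-p)=p(q+2)$. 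This is the source of the $\theta(C/D)^{2/r}$-type factor.

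If this route is too lossy in the constants, I would instead avoid the max. The alternative splits $|\hat R_J|\le |\hat N_J|/\hat D_J$ and controls $\hat D_J^{-1}$ on $\{\hat D_J<D/2\}$ through the moment $m=pq/(q-p)$ of $w_1V_1$ together with H\"older, pairing $\EE|w_1V_1|^m\le\theta$ with $\mathbb P(G^c)^{(q-p)/q}$. This variant accounts for the $\theta/D$ term.

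Finally, I would sum the good-event and bad-event contributions, collect the constants into the form $\frac{C}{D}\big(1+2\frac{N}{D}+\frac{\theta}{D}+\theta(\frac{C}{D})^{2/r}\big)$, and bound every power $J^{-a}$ with $a\geq 1/2$ by $J^{-1/2}$. The stated inequality is an $L^p$-norm-type estimate of order $J^{-1/2}$, and this is the form in which it is applied in Lemma \ref{lemma:convergence_of_the_weighted_mean_for_iid_samples_y}.
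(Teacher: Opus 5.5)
The paper gives no proof of this statement; it quotes it from Doukhan--Lang, so the only comparison is with the argument behind that citation. You read the conclusion as a bound on $\|\hat R_J-R\|_p=(\EE|\hat R_J-R|^p)^{1/p}$, and that reading is forced. Taken literally, with $\EE|\hat R_J-R|^p$ on the left and $J^{-1/2}$ on the right, the inequality fails for $p<1$. Take $w_j\equiv 1$ and $V_j$ i.i.d.\ standard Gaussian, and let $Z\sim\mathcal N(0,1)$. Then $C=\|Z\|_p$ and a finite $\theta$ are admissible, while $\EE|\hat R_J-R|^p=\EE|Z|^p\,J^{-p/2}$.

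In this reading your main argument is correct, and each step checks out:
\begin{itemize}
\item the decomposition;
\item $\hat D_J^{-1}\le 2/D$ on $G$;
\item the convex-combination bound $|\hat R_J|\le\max_j|V_j|$;
\item Markov's inequality for $\mathbb P(G^c)$;
\item the H\"older step with $r(q-p)=p(q+2)$.
\end{itemize}
For $p<1$, replace $2^{p-1}$ by $1$. The $\max_j|V_j|$ term contributes $\theta^{1/r}(2C/D)^{1+2/r}J^{-1/2}$ in $L^p$. Your route also needs less than the statement assumes: only $\EE|V_1|^r\le\theta$ and the two fluctuation bounds, never $\EE[w_1^q]$ or $\EE|w_1V_1|^m$. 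Lemma \ref{lem:unif_emp_msr_conv} only uses that the constant depends on $(p,q,C/D,|N|/D,\theta)$ uniformly in the index, so this suffices for the paper.

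Your last step does not follow: you cannot ``collect the constants into the form $\frac CD\big(1+2\frac ND+\frac\theta D+\theta(\frac CD)^{2/r}\big)$''. There are three problems.
\begin{itemize}
\item Bounding $\hat D_J^{-1}\le 2/D$ in front of $\hat N_J-N$ puts a coefficient $2$ on $C/D$. Your bound then exceeds the displayed right-hand side whenever, for example, $N=0$ and $\frac\theta D+\theta(\frac CD)^{2/r}<1$, which the hypotheses allow.
\item The term $|R|^p\,\mathbb P(G^c)$ is linear in $C/D$ only if you use $\mathbb P(G^c)\le\min\{1,(2CJ^{-1/2}/D)^q\}\le(2CJ^{-1/2}/D)^p$.
\item Your fallback fails. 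On $\{\hat D_J<D/2\}$ the denominator can be arbitrarily small, and no hypothesis controls $\hat D_J^{-1}$ there. So the $m$-moment cannot be spent on $G^c$.
\end{itemize}

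The displayed shape comes instead from the Collomb-type linearization
\[
\hat R_J-R=\frac{(\hat N_J-N)-R(\hat D_J-D)}{D}-\frac{(\hat D_J-D)(\hat R_J-R)}{D}.
\]
The first term has a deterministic denominator, which gives the $1$ and one factor $|N|/D$. Only the remainder, which carries the extra factor $\hat D_J-D$, is split:
\begin{itemize}
\item $|R|\,\|\hat D_J-D\|_p$ gives the second $|N|/D$.
\item On $G$, use $|\hat R_J|\le 2|\hat N_J|/D$ and H\"older with $\frac1p=\frac1q+\frac1m$ to pair $\|\hat D_J-D\|_q$ with $\|\hat N_J\|_m$. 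This is where $m$ enters, giving the $\theta/D$ term.
\item On $G^c$, three-factor H\"older with $|\hat D_J-D|\in L^q$, $\max_j|V_j|\in L^r$ and $\bbone_{G^c}$ gives the $(C/D)^{2/r}$ term.
\end{itemize}
Even this route reproduces the displayed constant only up to numerical factors and powers of $\theta$ (e.g.\ $\theta^{1/r}$ in place of $\theta$), since the transcribed formula is not homogeneous.
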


\begin{lemma}[Uniform mean-field convergence]
    \label{lem:unif_emp_msr_conv}
    Let $2\leq p<r$ and let $\{f_i:\RR^d\to\RR\}_{i\in \mathcal{A}}$ denote some family of Borel measurable mappings, let $\mu\in \rP_r(\RR^d)$, and let $V_1,V_2,\ldots$ be i.i.d. samples of $\mu$, and define $\omega_{ij}:= f_i(V_j)$ for $j\in\NN$ and $i\in\mathcal{A}$. Define $q:=\frac{p(r+2)}{r-p}$ and $m:=\frac{pq}{q-p}$. Define
    \begin{align*}
        &\hat{N}_{iJ}:= \frac{1}{J}\sum_{j=1}^J \omega_{ij}V_j,\quad 
        &&\hat{D}_{iJ}:= \frac{1}{J}\sum_{j=1}^J \omega_{ij},\quad
        &&\hat{R}_{iJ}:=\frac{\hat{N}_{iJ}}{\hat{D}_{iJ}},\\
        &N_i := \EE[\hat{N}_{i1}],\quad 
        &&D_i := \EE[\hat{D}_{i1}],\quad 
        &&R_i:=\EE[\hat{R}_{i1}].
    \end{align*}
    Suppose that
    \begin{align}
        \label{eq:unif_emp_msr_conv_cond}
        \sup_{\substack{i\in\mathcal{A} \\ j\in \NN}}\;\EE[\omega_{ij}^q]+\EE|\omega_{ij}V_j|^s+N_i+\frac{1}{D_i}
        <\infty,
    \end{align}
    then there exists a constant $C>0$ independent of $J\in \NN$ such that
    \begin{align*}
        \sup_{i\in\mathcal{A}}\EE[|\hat{R}_{iJ}-R_i|^p]
        \leq C\,J^{-\frac{1}{2}}
    \end{align*}
\end{lemma}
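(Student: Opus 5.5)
The plan is to apply Theorem \ref{thm:mf_stationary_sequence} separately to each index $i\in\mathcal{A}$ and then check that every constant it produces can be bounded independently of $i$.

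\textbf{Setting up the application.} Fix $i\in\mathcal{A}$ and take the sequence $\{(\omega_{ij},V_j)\}_{j\in\NN}$. It is i.i.d. because the $V_j$ are i.i.d. and $\omega_{ij}=f_i(V_j)$, so in particular it is stationary. The theorem also needs positive weights; in all our applications $f_i$ is an exponential weight $\omega^{\rE}_{\gamma}$, so I will assume $f_i>0$, and I would add this to the hypotheses of the lemma. I read $R_i$ as $N_i/D_i$, which is the quantity the theorem controls.

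The exponent names have to be matched with care, because the lemma and the theorem use them in swapped roles. The lemma's $r$ (the moment of $\mu$) plays the role of the theorem's $q$. The lemma's $q=\frac{p(r+2)}{r-p}$ (the moment of the weights) plays the role of the theorem's $r$. The exponent $m=\frac{pq}{q-p}$ is the moment of $\omega V$; I read the "$s$" appearing in \eqref{eq:unif_emp_msr_conv_cond} as this $m$.

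With this relabelling the moment hypotheses \eqref{eq:uniform_bnds_for_stationary_sequence} hold with a single constant
\begin{align*}
\theta:=\sup_{i,j}\big(\EE[\omega_{ij}^q]+\EE|\omega_{ij}V_j|^m\big)+\int_{\RR^d}|v|^r\diff\mu(v),
\end{align*}
which is finite by \eqref{eq:unif_emp_msr_conv_cond} and because $\mu\in\rP_r(\RR^d)$.

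\textbf{The two rate conditions.} These are the substantive part of the argument. Since $r>p\ge2$ we have $q>p\geq 2$, and $m=\frac{pq}{q-p}>p$. I will use the Marcinkiewicz--Zygmund (or Rosenthal) inequality for sums of i.i.d. centred variables, which holds for any exponent $k\ge2$ with a constant $C_k$ depending only on $k$:
\begin{align*}
\EE\Big|\tfrac1J\textstyle\sum_{j=1}^J(\xi_j-\EE\xi_j)\Big|^k\leq C_k\,J^{-k/2}\,\EE|\xi_1-\EE\xi_1|^k .
\end{align*}
Applying it with $\xi_j=\omega_{ij}$ and $k=q$ gives
\begin{align*}
\big(\EE|\hat D_{iJ}-D_i|^q\big)^{1/q}\le C\,J^{-1/2}\,\big(\EE\omega_{i1}^q\big)^{1/q}.
\end{align*}
Applying it componentwise with $\xi_j=\omega_{ij}V_j$ and $k=p$, and then using Jensen to pass from the $p$-th to the $m$-th moment ($m>p$), gives
\begin{align*}
\big(\EE|\hat N_{iJ}-N_i|^p\big)^{1/p}\le C\,J^{-1/2}\,\big(\EE|\omega_{i1}V_1|^m\big)^{1/m}.
\end{align*}
Both right-hand sides are bounded uniformly in $i$ by \eqref{eq:unif_emp_msr_conv_cond}, so the constant $C$ in the theorem can be chosen independently of $i$.

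\textbf{Conclusion.} Now \eqref{eq:mf_stationary_sequence} gives
\begin{align*}
\EE|\hat R_{iJ}-R_i|^p\le \frac{C}{D_i}\Big(1+2\frac{N_i}{D_i}+\frac{\theta}{D_i}+\theta\big(\tfrac{C}{D_i}\big)^{2/q}\Big)J^{-1/2}.
\end{align*}
The prefactor is bounded uniformly in $i$, because $\sup_i N_i<\infty$ and $\sup_i 1/D_i<\infty$ by \eqref{eq:unif_emp_msr_conv_cond}. Taking the supremum over $i\in\mathcal{A}$ then proves the claim.

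\textbf{Main obstacle.} The step I expect to need the most care is the uniformity of the two concentration rates. The point to verify is that the Marcinkiewicz--Zygmund constant depends only on the exponent. The family then enters only through the $q$-th moment of the weights and the $m$-th moment of $\omega V$, and both are controlled uniformly by \eqref{eq:unif_emp_msr_conv_cond}. The rest is bookkeeping of the exponent relabelling.
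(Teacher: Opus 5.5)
Your proposal is correct and follows the paper's proof. Both apply Theorem \ref{thm:mf_stationary_sequence} for each fixed $i$, get the two rate conditions from the Marcinkiewicz--Zygmund inequality plus Jensen with constants controlled by the uniform moments in \eqref{eq:unif_emp_msr_conv_cond}, and note that the prefactor in \eqref{eq:mf_stationary_sequence} is uniform in $i$. Your readings of $s$ as $m$ and of $R_i$ as $N_i/D_i$, and your added positivity of $f_i$, are the intended ones.

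One correction: there is no swap of exponents. The relation $q=\frac{p(r+2)}{r-p}$ is equivalent to $r=\frac{p(q+2)}{q-p}$, so the lemma's $q,r,m$ are exactly the theorem's $q,r,m$, which is how your verification actually uses them. Accordingly, the last exponent in your final bound should be $2/r$ rather than $2/q$; this is harmless.
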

\begin{proof}
        We adapt the proof of \cite[Lemma 3.7]{gerber_hoffmann_vaes_2025_meanfield}. In particular, we apply Theorem \ref{thm:mf_stationary_sequence}. We show that the conditions in \eqref{eq:uniform_bnds_for_stationary_sequence} are satisfied for all $i\in \mathcal{A}$ coefficient, and that the coefficients in \eqref{eq:mf_stationary_sequence} are uniformly bounded for all $i\in\mathcal{A}$. To that end, its clear by assumption that
        \begin{align*}
            \sup_{\substack{i\in\mathcal{A} \\ j\in \NN}} \EE[\omega_{ij}^q]+\EE|V_i|^r+\EE|\omega_{ij}V_j|^m<\infty.
        \end{align*}
        By the Marcinkiewicz–Zygmund inequality \cite[Chapter 10.3, Theorem 2]{chow_teicher_1997_probability} there exists a constant $B_p>0$ depending on $p$ such that
        \begin{align*}
            \EE|\hat{N}_{iJ}-N_i|^p
            &=\frac{1}{J^p}\EE\left\vert \sum_{j=1}^J \omega_{ij} V_j-\EE[\omega_{i1} V_1]\right\vert^p\leq \frac{B_p}{J^p}\EE\left\vert \sum_{j=1}^J |\omega_{ij} V_j-\EE[\omega_{i1} V_1]|^2\right\vert^{\frac{p}{2}}.
        \end{align*}
        Next, we use the Jensen inequality to derive
        \begin{align*}
            &\frac{B_p}{J^p}\EE\left\vert \sum_{j=1}^J |\omega_{ij} V_j-\EE[\omega_{i1} V_1]|^2\right\vert^{\frac{p}{2}}
            \leq \frac{B_p}{J^{\frac{p}{2}+1}}\EE\left[ \sum_{j=1}^J |\omega_{ij} V_j-\EE[\omega_{i1} V_1]|^p\right]\\
            &=\frac{B_p}{J^{\frac{p}{2}}}\EE|\omega_{i1} V_1-\EE[\omega_{i1} V_1]|^p
            \leq 2^p B_p\,\EE |\omega_{i1} V_1|^p\, J^{-\frac{p}{2}}\leq 2^p B_p e^{\alpha p c}\left(\int_{\RR^d} |x|^r\diff \mu(x)\right)^{\frac{p}{r}} J^{-\frac{p}{2}}.
        \end{align*}
        We establish a similar bound for $\EE|\hat{D}_{iJ}-D_i|^q$. By assumption the numerator $D_i$ and denominator are uniformly bounded from above and below respectively. Thus we conclude the proof.
    \end{proof}

\section{Well-posedness of truncated mean-field equation}
\label{section:proof_trunc_sde}
\begin{proof}[Proof of Lemma \ref{lem:existence_trunc_sol}]
We prove that the truncated equation \eqref{eq:mf_cbo1} is well-posed by following the proof of \cite[Theorem 2.4]{gerber_hoffmann_vaes_2025_meanfield}, who in turn follow the proof of \cite[Theorem 3.1, Theorem 3.2]{carrillo_choi_totzeck_tse_2018_analytical}. 

We shall use standard Leray-Schauder fixed point argument. Consequently, the proof is split up into five parts. For simplicity, we drop the sup-script $R$ in the following computation.

\smallskip

\noindent\underline{\it Step 1.1: Solution operator.} Let us define the space of continuous functions $\Theta_R:=C^0([0,T],\RR^{d_1})\times C^0([0,T]\times B_R,\RR^d)$. For some given $(u^X,u^Y)\in \Theta_R$, by using the classical theory of SDEs \cite[Theorem 5.2.1]{oksendal_2003_stochastic} we can uniquely solve the following SDEs,
\begin{align}\label{LCBO}
\begin{cases}
    \diff X_t
    =-\lambda( X_t-u_t^X)\diff t
    +\sigma D( X_t-u_t^X)\diff B_t^{X},\\
    \diff Y_t
    =-\lambda( Y_t-u_t^Y(P_R(X_t)))\diff t
    +\sigma D( Y_t-u_t^Y(P_R(X_t)))\diff B_t^{Y},
\end{cases}
\end{align}
with the same initial condition $X_0=\bX^R_0$ and $Y_0=\bY^R_0$, and the same Brownian motions as in \eqref{eq:mf_cbo1}. We define the laws $\nu_t^m=\mbox{Law} (m_t)$, $m\in \{X,Y\}$. Additionally, the processes $(X,Y)$ are almost surely continuous.  For any $0\leq r\leq t\leq T$ and $m\in \{X,Y\}$, it holds by BDG inequality that
\begin{align}\label{conti}
    \begin{split}
       &\EE\left[\sup_{s\in[r,t]}|m_s-m_r|^p\right] \\ 
    &\leq 2^{p-1}(t-r)^{p-1}\lambda^p \int_r^t\EE|m_s-u_s^m|^p\diff s\\
    &\phantom{\leq}+C_{BDG}\,2^{p-1}\sigma^p(t-r)^{p/2-1}\int_r^t\EE|D(m_s-u_s^m)|^p\diff s.
    \end{split}
\end{align}
Furthermore, letting $r=0$, one obtains
\begin{align*}
    \EE\left[\sup_{s\in[0,t]}|m_s|^p\right]	\leq C\left(\EE|m_0|^p+\|u^m\|_\infty^p+\int_0^t\EE|m_s|^p\diff s\right),
\end{align*}
where $C>0$ depends only on $T$, $\lambda$, $\sigma$ and $p$.  Here we have used the simplified notations
\begin{align*}
    \|u^X\|_\infty:= \|u^X\|_{C^{0}([0,T];\RR^{d_1})}= \sup_{t\in[0,T]} |u_t^X|<\infty.
\end{align*}
and
\begin{align*}
    \|u^Y\|_\infty:= \|u^Y\|_{C^{0}([0,T]\times B_R;\RR^{d_2})}= \sup_{t\in[0,T]}\sup_{x\in B_R} |u_t^Y(x)|<\infty.
\end{align*}
Then, it follows from Gronwall's inequality that
\begin{equation}\label{momentbound}
\EE\left[\sup_{t\in[0,T]}|m_t|^p\right]\leq C\;\big(\EE|m_0|^p+\|u^m\|_\infty^p\big)<\infty.
\end{equation}
Therefore, by using the dominated convergence theorem, we have for any $(t,x)\in[0,T]\times B_R$ that 
\begin{align*}
    \begin{split}
        \cony_\beta(\nu_t^Y,x)=\frac{\int_{\RR^d} y\, \omega_{-\beta}^{\rE}(y,x)\diff\nu_t^Y(y)}{\int_{\RR^d} \omega_{-\beta}^{\rE}(y,x)\diff \nu_t^Y(y)}\to \frac{\int_{\RR^d} y\, \omega_{-\beta}^{\rE}(y,x')\diff \nu_r^Y(y)}{\int_{\RR^d} \omega_{-\beta}^{\rE}(y,x')\diff \nu_r^Y(y)}=	\cony_\beta(\nu_r^Y, x')
    \end{split}
    \quad\text{as } (t,x)\to (r,x')\,.
\end{align*}
and by \ref{eq:upper_bnd_cons} from Lemma \ref{lem:est_weighted_mean}
\begin{align}\label{Ybound}
\begin{split}
    &\| \cony_\beta(\nu^Y,\cdot )\|_\infty
    = \sup_{t\in[0,T]} \sup_{x\in B_R} \left|\frac{\int_{\RR^d} y\, \omega_{-\beta}^{\rE}(y,x)\diff\nu_t^Y(y)}{\int_{\RR^d} \omega_{-\beta}^{\rE}(y,x)\diff \nu_t^Y(y)}\right|
    \leq\sup_{t\in[0,T]}C\;\left(\int_{\RR^d}|y|^p\diff\nu_t^Y(y)\right)^{\frac{1}{p}}<\infty\,.
\end{split}
\end{align}
This implies that the function $(t,x)\mapsto \cony_\beta(\nu_t^Y, x)$ is in $C^0([0,T]\times B_R,\RR^{d_2})$. 
Similarly one can show that $t\mapsto \conx_{\alpha,\beta}(\nu_t^X)$ is in $C^0([0,T],\RR^{d_1})$.
Thus the following solution operator is well-defined
\begin{align*}
    \mathcal{T}:
 \left\{
 \begin{array}{rcl}
      \Theta_R &\to& \Theta_R,  \\
      (u^X,u^Y)&\mapsto &(\conx_{\alpha,\beta}(\nu^X,\nu^Y),\cony_\beta(\nu^Y,\cdot)).
 \end{array}\right.
\end{align*}
\underline{\it Step 1.2: $\mathcal{T}$ is continuous.} Take any two maps $(u^X,u^Y),(u^{X'},u^{Y'})\in \Theta_R$ and let the processes $(X,Y)$, $(X',Y')$ be the corresponding solutions to \eqref{LCBO} with respective laws $\{\nu^m\}_{m\in\{X,Y,X',Y'\}}$. The idea is to first bound the consensus functions with the expected particle difference, and then bound the expected particle difference by the continuous functions $u^X-u^{X'}$ and $u^Y-u^{Y'}$. To that end, we use Corollary \ref{cor:stab_est_conx} to estimate the difference between
    \begin{align}
    \label{eq:computediffX}
    \begin{split}
        &|\conx_{\alpha,\beta}(\nu_t^X,\nu_t^Y)-\conx_{\alpha,\beta}(\nu_t^{X'},\nu_t^{Y'})|^p
    \leq C \,\big(W_p^p(\nu_t^X,\nu_t^{X'})+W_p^p(\nu^Y_t,\nu_t^{Y'})\big)\\
    &\leq C\, \EE\left[\sup_{s\in[0,t]}|X_s-X_s'|^p+|Y_s-Y_s'|^p\right],
    \end{split}
\end{align}
and Corollary  \ref{cor:stab_est_cony} together with the Lipschitz continuity of $P_R$ to estimate the difference between
\begin{align}\label{eq:computediffY}
    \begin{split}
    &\sup_{x\in B_R}|\cony_{\beta}(\nu_t^Y,x)-\cony_{\beta}(\nu_t^{Y'},x)|^p
    \leq C\, W_p^p(\nu_t^Y,\nu_t^{Y'}) \\
    &\leq C\, \EE\left[\sup_{s\in[0,t]}|Y_s-Y_s'|^p\right].
    \end{split}
\end{align}
Moreover, it holds by BDG inequality that
\begin{align*}
        \EE\left[\sup_{s\in[0,t]}|X_s-X_s'|^p\right]&\leq 2^{p-1}t^{p-1}\lambda^p \int_0^t\EE|X_s-X_s'-u_s^X+u_s^{X'}|^p\diff s\\
    &\phantom{\leq}+C_{BDG}\,2^{p-1}\sigma^p(t-r)^{p/2-1}\int_r^t\EE|D(X_s-u_s^X)-D(X_s'-u_s^{X'})|^p\diff s\\
    & \leq C\left(\int_0^t \EE|X_s-X_s'|^p\diff s+\int_0^t \EE|u_s^X-u_s^{X'}|^p\diff s\right).
\end{align*}
Now, by the Gronwall inequality we derive the inequality
\begin{align}
\label{eq:part_diff_cts_x}
\EE\left[\sup_{s\in[0,T]}|X_s-X_s'|^p\right]
    \leq C\, \|u^X-u^{X'}\|_\infty^p.
\end{align}
Similarly, it also holds that
  \begin{align}
  \label{eq:part_diff_cts_y}
\EE\left[\sup_{s\in[0,T]}|Y_s-Y_s'|^p\right]
    \leq C\, \|u^Y-u^{Y'}\|_\infty^p.
\end{align}
Combining \eqref{eq:computediffX} with \eqref{eq:part_diff_cts_x} and \eqref{eq:part_diff_cts_y}, and combining \eqref{eq:computediffY} with \eqref{eq:part_diff_cts_y} proves that $\mathcal{T}$ is a continuous operator.

\noindent\underline{\it Step 1.3: $\mathcal{T}$ is compact.} To show that $\mathcal{T}$ is a compact operator, we fix $M>0$ and define
\begin{equation*}
    \Lambda_M:=\left\{ (u^X,u^Y) \in \Theta_R\;\left\vert\, \max_{m\in \{X,Y\}}\|u^m\|_{\infty}\leq M \right.\right\}.
\end{equation*}
Now, we take any function $(u^X,u^Y) \in \Lambda_M$ and consider the corresponding solution $(X, Y)$ of \eqref{LCBO} with pointwise law $(\nu^X, \nu^Y)$. It has already been shown in \eqref{Ybound} that 
\begin{equation*}
         \|\conx_{\alpha,\beta}(\nu^X,\nu^Y)\|_\infty
         +\|\cony_{\beta}(\nu^Y,\cdot)\|_\infty\leq C\,.
\end{equation*} 
Furthermore, in view of \eqref{conti} and \eqref{momentbound}, there is a constant $L>0$ depending on $M$, $T$, $\lambda$, $\sigma$ and $p$ such that we have the Hölder continuity
\begin{align*}
    \EE|X_t-X_r|^p\leq 2^{-p}L|t-r|^{p/2},\quad \EE|Y_t-Y_r|^p\leq 2^{-p}L|t-r|^{p/2},
\end{align*}
which implies that $W_p(\nu_t^X,\nu_r^X)+W_p(\nu_t^Y,\nu_r^Y)\leq L^{1/p}|t-r|^{1/2}$. 
Then, by Corollary \ref{cor:stab_est_conx}, it holds that
\begin{align}\label{conditionX}
\conx_{\alpha,\beta}(\nu^X,\nu^Y)\in \bigg\{&f\in C^{0}([0,T];\,\RR^{d_1})\;\bigg\vert\; \|f\|_\infty\leq C \notag \\
&\mbox{and }  |f(t)-f(r)|\leq C|t-r|^{1/2},\; \forall~t,r\in[0,T]\bigg\}
\end{align}
and by Corollary \ref{cor:stab_est_conx} one has
\begin{align}\label{conditionY}
\notag\cony_\beta(\nu^Y,\cdot )\in \bigg\{&f\in C^{0}([0,T]\times B_R;\,\RR^{d_2})\;\bigg\vert\;\|f\|_\infty\leq C \\
 &\mbox{and } |f(t,x)-f(r,y)|\leq C(|t-r|^{1/2}+|x-y|),\;\\ &\notag  \forall~t,r\in[0,T], \forall~x,y\in B_R\bigg\}\,.
\end{align}
Then, by the general Arzel\`{a}-Ascoli theorem, we conclude that $T(\Lambda_M)$ is a relatively compact subset of  $\Theta_R$. 
This completes the proof that $\mathcal{T}$ is compact. 

\noindent\underline{\it Step 1.4: Leray-Schauder fixed point Theorem.} The goal of this step is to apply the Leray-Schauder fixed point Theorem \cite[Theorem 11.3]{gilbarg_trudinger_2001_elliptic}. To that end, we need to prove that the following set is bounded:
\begin{align}\label{set}
    &\bigg\{ (u^X,u^Y) \in \Theta_R\;\left\vert\; 
    \begin{array}{l}
         \exists\xi\in [0,1] \mbox{ such that }  \\
         (u^X,u^Y) =\xi \mathcal{T}(u^X,u^Y) 
    \end{array}
     \bigg\}\,\right..
\end{align}
For this purpose, let $(u^X,u^Y) \in \Theta_R$ be such that
\begin{equation}\label{4.7}
    (u^X,u^Y) =\xi \mathcal{T}(u^X,u^Y)
\end{equation}
for some $\xi\in [0,1]$, and let $(X,Y)$ denote corresponding solution to \eqref{LCBO}. By \eqref{4.7}, the processes $(X, Y)$ are also solutions to
      \begin{align*}
    &\diff X_t
    =-\lambda( X_t-\xi\conx_{\alpha,\beta}(\nu_t^X,\nu_t^Y))\diff t
    +\sigma D( X_t-\xi\conx_{\alpha,\beta}(\nu_t^X,\nu_t^Y)\diff B_t^{X}, \\
    &\diff Y_t
    =-\lambda( Y_t-\xi\cony_{\beta}(\nu_t^Y,P_R(X_t)))\diff t
    +\sigma D( Y_t-\xi\cony_{\beta}(\nu_t^Y,P_R(X_t))\diff B_t^{Y},
\end{align*}
where $\nu_t^X=\mbox{Law} (X_t)$ and $\nu_t^Y=\mbox{Law} (Y_t)$. Then, by Lemma \ref{lem:moment_estimate_mf_cbo} and Remark \ref{remark:add_xi_to_cbo} we find that
\begin{align}
    \label{eq:trunc_mf_cbo_exp_bnd}
    \EE\left[\sup_{t\in[0,T]}|X_t|^p+|Y_t|^p\right]\leq \kappa	\, \big(\EE|X_0|^p+\EE|Y_0|^p\big),
\end{align}
where $\kappa>0$ is a constant. Using the same argument as in \eqref{Ybound}, it follows that $\conx_{\alpha,\beta}(\nu_t^X,\nu_t^Y)$ and $\cony_{\beta}(\nu_t^Y,\cdot)$ can be uniformly bounded in $\Theta_R$. This implies that the set \eqref{set} is indeed bounded. Therefore, we obtain the existence of a fixed point of $\mathcal{T}$, which is a solution to \eqref{eq:mf_cbo1}, and we establish the moment bounds in \eqref{eq:moment_bnd_mf_cbo} as a byproduct. 

These bounds are uniform in $R$, which proves the property \ref{prop:uniform_p_mom_R}. Therefore, by using \eqref{Ybound}, we can show that the set $\{\conx_{\alpha,\beta}(\bX^R,\bY^R)\;\vert\;R>0\}$ is uniformly bounded in $C^0([0,T],\,\RR^{d_1})$, and that the set $\{\cony_{\beta}(\dmf^{Y,R}, \cdot)\;\vert\;R>0\}$ is uniformly bounded in $C^0([0,T]\times B_R,\,\RR^{d_2})$. Therefore, we see that the properties \ref{prop:equic_con_x} and \ref{prop:equic_con_y} hold by \eqref{conditionX} and \eqref{conditionY} respectively.

\smallskip

\underline{\it Step 1.5: Uniqueness of solution.} Let $(u^X,u^Y),(u^{X'},u^{Y'})$ be two fixed points of $\mathcal{T}$
with corresponding solutions $(X,Y)$, $(X',Y'):\Omega\to  \Theta_R$ to \eqref{eq:mf_cbo1}. Define the difference $W_t^m:=m_t-m_t'$, $m\in\{X,Y\}$ and let $\nu_t^m:=\law(m_t)$ for $m\in\{X,Y,X',Y'\}$ . By the BDG inequality and the fact that $\EE|W_0^Y|^p=0$ we have, for all $t\in [0,T]$, 
\begin{align*}
    &\frac{1}{2^{p-1}}\EE\left[\sup_{s\in [0,t]}\; |W_s^Y|^p\right]\\
    &\leq T^{p-1}\lambda^p\int_0^t \EE|W_s^Y-\cony_\beta(\nu_s^Y,P_R(X_s))+\cony_\beta(\nu_s^{Y'},P_R(X_s'))|^p\diff s\\
    &\phantom{\leq} + C_{\mathrm{BDG}}\, T^{p/2-1}\sigma^p\int_0^t \EE|Q|^p\diff s.\\
    &\leq C\left(\int_0^t \EE|W_s^Y|^p\diff s+\int_0^t \EE|\cony_\beta(\nu_s^Y,P_R(X_s))-\cony_\beta(\nu_s^{Y'},P_R(X_s'))|^p\diff s\right),
\end{align*}
where we define the difference
\begin{align*}
    Q:=D\Big( Y_s-\cony_\beta(\nu_s^Y,P_R(X_s))\Big)-D\Big(Y_s'-\cony_\beta(\nu_s^{Y'},P_R(X_s'))\Big),
\end{align*}
and where the constant $C>0$ depends on $T$, $\lambda$, $\sigma$ and $p$. By Corollary \ref{cor:stab_est_cony} and the Lipschitz continuity of $P_R$ we have that 
\begin{align*}
    |\cony_\beta(\nu_r^Y,P_R(X_t))-\cony_\beta(\nu_r^{Y'},P_R(X_t'))|\leq C\big(W_p(\nu_t^Y,\nu_t^{Y'})+|X_t-X_t'|\big).
\end{align*}
Also, by definition of the Wasserstein distance $W_p(\nu_t^Y,\nu_t^{Y'})^p\leq \EE|W_t^Y|^p$. Thus, we conclude
\begin{align*}
    \EE\left[\sup_{r\in [0,t]}\; |W_r^Y|^p\right]
    \leq C\int_0^t  \EE|W_r^X|^p+\EE|W_r^Y|^p\diff r,
\end{align*}
for some constant $C>0$ depending on $T$, $\lambda$, $\sigma$ and $p$. Similarly, we also have
   \begin{align*}
    \EE\left[\sup_{r\in [0,t]}\; |W_r^X|^p\right]
    \leq C\int_0^t  \EE|W_r^X|^p+\EE|W_r^Y|^p\diff r.
\end{align*}
Hence, by the Gronwall inequality we get
\begin{align*}
    \EE\left[\sup_{t\in [0,T]}\; |W_t^X|^p+|W_t^Y|^p\right]
    =0.
\end{align*}
With this we have proven that the solutions to \eqref{eq:mf_cbo1} are well-posed.
\end{proof}

\section*{Acknowledgements}
JW was supported by the Engineering and Physical Sciences Research Council (grant number EP/W524311/1). HH acknowledges the support of the starting grant from Hunan University.
\end{document}